\theoremstyle{plain}
\newtheorem{thm}[equation]{Theorem}
\newtheorem{rmk}[equation]{Remark}
\newtheorem{lem}[equation]{Lemma}
\newtheorem{dfn}[equation]{Definition}
\numberwithin{equation}{section}
\newcommand{\sm}{\left(\begin{smallmatrix}}
\newcommand{\esm}{\end{smallmatrix}\right)}
\newcommand{\bpm}{\begin{pmatrix}}
\newcommand{\ebpm}{\end{pmatrix}}
\newcommand{\C}{\mathbb{C}}
\newcommand{\Q}{\mathbb{Q}}
\newcommand{\R}{\mathbb{R}}
\newcommand{\Z}{\mathbb{Z}}
\newcommand{\A}{\mathbb{A}}
\newcommand{\bH}{\mathbb{H}}
\newcommand{\fund}{\mathfrak F}
\newcommand{\LtwoH}{L^2\left(SL(n, \Z)\bsl \mathbb{H}^n\right)}
\newcommand{\Ltwog}{L^2\left(SL(n, \Z)\bsl SL(n, \R)\right)}
\newcommand{\LtwoHcusp}{L^2_{\rm cusp}\left(SL(n, \Z)\bsl \bH^n\right)}
\newcommand{\LtwoHcont}{L^2_{\rm cont.}\left(SL(n, \Z)\bsl \bH^n\right)}
\newcommand {\LtwoHresi}{L^2_{\rm resi.}\left(SL(n, \Z)\bsl \bH^n\right)}
\newcommand{\bsl}{\backslash}
\newcommand{\rIm}{{\rm Im}}
\newcommand{\rRe}{{\rm Re}}
\title{Approximate Converse Theorem}
\author{Min Lee}
\address{Mathematics department, Brown university, Providence, RI}\email{minlee@math.brown.edu}
\begin{document}

\begin{abstract}
We present an approximate converse theorem which measures how close a given set of irreducible admissible unramified unitary generic local representations of $GL(n)$ is to a genuine cuspidal representation. To get a formula for the measure, we introduce a quasi-Maass form on the generalized upper half plane for a given set of local representations. We also construct an annihilating operator which enables us to write down an explicit cuspidal automorphic function.
\end{abstract}

\maketitle

\section{Introduction}

The spectral theory of non-holomorphic automorphic forms for the Poincar\'e upper half plane $\bH^2=\left\{z\in \C\;|\; \rIm(z)>0\right\}$ began with Maass in 1949. 
It is a highly non-trivial problem to show that there exist infinitely many Maass forms of the Laplacian for $L^2\left(SL(2, \Z)\bsl \bH^2\right)$. The existence of infinitely many Maass forms for $SL(2, \Z)$ was first proved by Selberg \cite{Selberg} in 1956. He introduced the trace formula as a tool to obtain Weyl's law, which gives an asymptotic count for the number of Maass forms with Laplacian eigenvalue $\left|\lambda\right| \leq X$ as $X\to\infty$. Selberg's method was extended by Miller \cite{Miller} to obtain Weyl's law for Maass forms for $SL(3, \Z)$. In 2004, M\"uller \cite{Muller} further extended Selberg's method to obtain Weyl's law for Maass forms for the congruence subgroups $\Gamma<SL(n, \Z)$, $n\geq 2$. 

Up to now, no one has found a single explicit example of a Maass form for $SL(n, \Z)$, with $n\geq 2$, although Maass \cite{Maass} discovered some examples for congruence subgroups $\Gamma< SL(2, \Z)$ of finite index by using Hecke $L$-functions. In the 1970's a number of authors considered the problem of computing Maass forms for $SL(2, \Z)$ numerically. The first notable algorithms for computing Maass forms on $SL(2, \Z)\bsl \bH^2$ are due to Stark \cite{Stark} and Hejhal \cite{Hejhal}. In 2006, Booker, Str\"ombergsson and Venkatesh \cite{Booker-Str-Venkatesh} computed the Laplace and many Hecke eigenvalues for the first few Maass forms on $PSL(2, \Z)\bsl \bH^2$ to over 1000 decimal places, based on Hejhal's algorithm. Moreover, they suggested a method of how to check the numerical computation rigorously and verified that Laplacian eigenvalues were correct up to 100 decimal places. 

Recently, Lindenstrauss and Venkatesh  \cite{Lindenstrauss-Venkatesh} obtained Weyl's law for spherical cusp forms on $G(\Z)\bsl G(\R) / K_\infty$ where $G$ is a split semisimple group over $\Q$ and $K_\infty \subset G$ is the maximal compact subgroup. In the Appendix \cite{Lindenstrauss-Venkatesh}, they explained a short constructive proof of the existence of cusp forms using Whittaker functions. This proof does not give Weyl's law, but it gives a very explicit method for constructing cuspidal functions, which was used in \cite{Booker-Str-Venkatesh}. Moreover, this constructive method can be used to attack the following ``approximate converse" problem suggested by Peter Sarnak, at the conference in 2008 on {\it Analytic number theory in higher rank groups}: 

{\it Given a positive number $X$, a set $S$ of places and a representation $\pi_v$ of $GL(n, \Q_v)$ for $v\in S$, give an algorithm to determine whether or not there is a global automorphic representation $\sigma$ whose analytic conductor is at most $X$ and whose local component at $v$ is within $\epsilon$-distance from $\pi_v$ for each place $v\in S$. 
} 

The main goal of this paper is to suggest the approximate converse theorem (Theorem \ref{t:main}) as an answer to this question for globally unramified cuspidal automorphic representations of $GL(n, \A)$ where $\A$ denotes the ring of adeles over $\Q$. 

\subsection{Quasi-Maass forms and the annihilating operator}  

Let $n\geq 2$ be an integer. For a place $v\leq \infty$, let $\pi_v$ be an irreducible admissible unramified unitary generic representation of $\Q_v^\times\bsl GL(n, \Q_v)$. Let $L(s, \pi_v)$ be the local $L$-function attached to $\pi_v$. 
Then there is the associated Satake (or Langlands) parameter $\ell_{\pi_v}=(\ell_{\pi_v, 1}, \ldots, \ell_{\pi_v, n})\in \C^n$ with $\ell_{\pi_v, 1}+\cdots +\ell_{\pi_v, n}=0$ such that 
	\begin{align}\label{e:parameters_local_representation}
	L(s, \pi_v) = \left\{\begin{array}{ll}
	\pi^{-\frac{ns}{2}} \prod\limits_{j=1}^n \Gamma\left(\frac{s+\ell_{\pi_\infty, j}}{2}\right), & \text{ if }v=\infty\\
	\prod\limits_{j=1}^n \left(1-p^{-\ell_{\pi_p, j}-s}\right)^{-1}, & \text{ if }v=p<\infty
	\end{array}\right.
	\end{align}
for $s\in \C$. 

For $v=p$ a finite prime, we have
	$$L(s, \pi_p) = \left(1- \lambda_p^{(1)}(\ell_{\pi_p}) p^{-s} +\cdots +(-1)^j \lambda_p^{(j)}(\ell_{\pi_p})p^{-js} +\cdots +(-1)^np^{-ns}\right)^{-1}$$
	where
	\begin{align}\label{e:parameter_Hecke_eigenvalue}
	\lambda_p^{(j)}(\ell_{\pi_p}) = \sum_{1\leq r_1< \cdots < r_j\leq n}  p^{-(\ell_{\pi_p, r_1}+\cdots +\ell_{\pi_p, r_j})}.
	\end{align}

For $v=\infty$ the infinity prime, we have the Whittaker function $W_J(z; \ell_{\pi_\infty}, \pm 1)$ of type $\ell_{\pi_\infty}$ in (\ref{e:Whittaker}) 
 for $z\in \bH^n\cong SL(n, \R)/SO(n, \R)$, the generalized upper half plane. 
It is a solution of differential equations given
 by Casimir operators $\mathcal C_n^{(j)}$ in (\ref{e:Casimir_op})
 with eigenvalues $\lambda_\infty^{(j)}(\ell_{\pi_\infty})\in \C$ for $j=1, \ldots, n-1$ in (\ref{e:Casimir_eigenvalue}).
 The Casimir operators generate the center of the universal enveloping algebra of the Lie algebra of $GL(n, \R)$. 
 In particular $\Delta_n=-\mathcal C_n^{(1)}$ denotes the Laplacian and 
 $$\lambda_n(\ell_{\pi_\infty}) :=-\lambda_\infty^{(1)}(\ell_{\pi_\infty})= \frac{n
 +1}{12} -\frac{1}{n(n-1)}\left(\ell_{\pi_\infty, 1}^2 +\cdots +\ell_{\pi_\infty, n}^2\right)$$
 is  the corresponding eigenvalue.
 The 
 Whittaker functions were constructed by Jacquet \cite{Jacquet}. 

The goal of this paper is to suggest a method to compare a set of local representations and a global cuspidal representation. 
In order to compare sets of local representations, we should define a ``distance" between them.
 Roughly speaking, the ``distance" can be defined by the difference between the coefficients of local $L$-functions.
 
 For $M$ a set of places of $\Q$ (including $\infty$), let 	
 	$$\Pi_M := \left\{\pi_v\;|\; v\in M\right\}$$
where each $\pi_v$ is an irreducible admissible unitary unramified generic representation of $\Q_v^\times\bsl GL(n, \Q_v)$.  
 
\begin{dfn}[{\bf Distances between sets of local representations}]\label{d:closeness}
Let $M$ and $M'$ be sets of places of $\Q$ including $\infty$ and $\Pi_M$, $\Pi'_{M'}$ as above. 
Let $S\subset M\cap M'$ be a finite subset including $\infty$. Define the distance between $\Pi_M$ and $\Pi'_{M'}$ for $S$ to be
	\begin{align}\label{e:close}
	&d_S(\Pi_M, \Pi'_{M'})\\\notag
	&\quad:= \sum_{j=1}^{n-1}\left|\lambda_\infty^{(j)}(\ell_{\pi_\infty})-\lambda_\infty^{(j)}(\ell_{\pi'_\infty})\right|^2 +\sum_{q\in S, \atop \text{ finite}} \sum_{j=1}^{\lfloor\frac{n}{2}\rfloor}\left|\lambda_q^{(j)}(\ell_{\pi_q})-\lambda_q^{(j)}(\ell_{\pi'_q})\right|^2
	\end{align}
for $\pi_v\in \Pi_M$ and $\pi'_v\in \Pi'_{M'}$. 
\end{dfn}


We use the quasi-mode construction \cite{Booker-Str-Venkatesh} for a given set of local representations $\Pi_M$ to construct an automorphic function. 

For any non-negative integers $k_1, \ldots, k_{n-1}$, let $S_{k_1, \ldots, k_{n-1}}(x_1, \ldots, x_n)$ 
be a Schur polynomial as in (\ref{e:Schur}). 
For $\pi_p\in \Pi_M$ for a prime $p$, 
define $A_{\Pi_M}(p^{k_1}, \ldots, p^{k_{n-1}}) := S_{k_1, \ldots, k_{n-1}}(p^{-\ell_{\pi_p, 1}}, \ldots, p^{-\ell_{\pi_p, n}})$. 
For any positive integers $m_1, \ldots, m_{n-1}$, we construct $A_{\Pi_M}(m_1, \ldots, m_{n-1})$ satisfying 
	$$A_{\Pi_M}(m_1, \ldots, m_{n-1})\cdot A_{\Pi_{M_1}}(m'_1, \ldots, m'_{n-1}) = A_{\Pi_M}(m_1m'_1, \ldots, m_{n-1}m'_{n-1})$$
	if $m_1\cdots m_{n-1}$ and $m_1'\cdots m_{n-1}'$ are relative prime to each other.
	 Set $A_{\Pi_M}(m_1, \ldots, m_{n-1})=0$ if there exists a prime $q\notin M$ such that $q\mid m_1\cdots m_{n-1}$. 

By combining $W_J(*; \ell_{\pi_\infty}, \pm 1)$, the Whittaker function of type $\ell_{\pi_\infty}$, and complex numbers $A_{\Pi_M}(m_1, \ldots, m_{n-1})$ for $m_1, \ldots, m_{n-1}\in \Z$,  
we construct a function for $z\in \bH^n$, which is essentially a Whittaker-Fourier expansion. 

\begin{dfn}[{\bf Quasi-Maass form}]\label{d:Quasi-Maass}
Let $M$ be a set of places over $\Q$ including $\infty$.
For $z\in \bH^n$, define 
	\begin{align}\label{e:Quasi-Maass}
	F_{\Pi_M}(z) &= \sum_{\gamma\in N(n-1, \Z)\bsl SL(n-1, \Z)}\sum_{m_1=1}^\infty\cdots\sum_{m_{n-2}=1}^\infty\sum_{m_{n-1}\neq 0} \frac{A_{\Pi_M}(m_1, \ldots, m_{n-1})}{\prod_{k=1}^{n-1}\left|m_k\right|^{\frac{k(n-k)}{2}}}\\\notag
	&\times W_J\left(\bpm m_1\cdots m_{n-2}|m_{n-1}| & & & \\ & \ddots & & \\ & & m_1 & \\ & & & 1\ebpm \cdot\bpm \gamma & \\ & 1\ebpm \; z; \ell_{\pi_\infty}, \frac{m_{n-1}}{|m_{n-1}|}\right)\;,
	\end{align}
Then $F_{\Pi_M}$ is called a quasi-Maass form of $\Pi_M$.
\end{dfn}

A quasi-Maass form $F_{\Pi_M}$ is a function on $\bH^n$ which lies in the restricted tensor product of local representations $\pi_v\in \Pi_M$.
 By definition, we can easily observe that 
 $F_{\Pi_M}$ is an eigenfunction of the Casimir operators $\mathcal C_n^{(j)}$
 with eigenvalues $\lambda_\infty^{(j)}(\ell_{\pi_\infty})$ for $\pi_\infty\in \Pi_M$, for $j=1, \ldots, n-1$. It is also an 
 eigenfunction of the Hecke operators.
  In particular, for each $p\in M$, the $\lambda_p^{(j)}(\ell_{\pi_p})$ for $\pi_p\in \Pi_M$ 
 are  eigenvalues of $F_{\Pi_M}$ of Hecke operators 
 $T_p^{(j)}$ for $j=1, \ldots, \lfloor\frac{n}
 {2}\rfloor$ given in Definition \ref{d:extend_Hecke}.

A Hecke-Maass form is a smooth function in $L^2(SL(n, \Z)\bsl \bH^n)$ which is an eigenfunction of the Casimir operators and the Hecke operators simultaneously. 
Every Hecke-Maass form is a quasi-Maass form but not vice versa. 
For an arbitrary set of local representations, the quasi-Maass form usually is not automorphic for $SL(n, \Z)$. 
Fix a fundamental domain $\fund^n\cong SL(n, \Z)\bsl \bH^n$ (described in Remark \ref{r:fundamental_domain_explicit}), and define the automorphic lifting of the quasi-Maass form as follows.

\begin{dfn}[{\bf Automorphic lifting}]\label{d:auto-lifting}
Define an automorphic lifting 
	\begin{align*}
	\widetilde F_{\Pi_M}(z) := F_{\Pi_M}(\gamma z),
	\end{align*}
for any $z\in \bH^n$ and a unique $\gamma\in SL(n, \Z)$ such that $\gamma z\in \fund^n$.
\end{dfn}

Let
	\begin{align}\label{e:fundamental_domain_Parabolic}
	\widetilde\fund^n = \underset{\gamma\in SL(n-1, \Z), \atop c_1, \ldots, c_{n-1}\in \Z}{\bigcup}\bpm & & &c_1\\ & \gamma & &\vdots\\ & & & c_{n-1}\\ 0 & \ldots & 0 &1\ebpm \;\fund^n\;,
	\end{align}
then $\widetilde F_{\Pi_M}(z) = F_{\Pi_M}(z)$ for $z\in \widetilde\fund^n$. 
The function $\widetilde F_{\Pi_M}$ is automorphic for $SL(n, \Z)$ and square-integrable.
 But it is neither smooth nor cuspidal in general. To construct a cuspidal function from $\widetilde F_{\Pi_M}$, we use an operator whose image is cuspidal, defined in \cite{Lindenstrauss-Venkatesh}. 

The approach in \cite{Lindenstrauss-Venkatesh} is based on the observation that there are strong relations between the spectrum of the Eisenstein series at different places. 
From this observation, a convolution operator was constructed, which annihilates the spectrum of the Eisenstein series. 
So the image of the operator is purely cuspidal. 
This convolution operator was used to obtain Weyl's law \cite{Lindenstrauss-Venkatesh} 
and also was used to give  a short and elementary proof which shows the existence of infinitely many Maass forms. 

For $PSL(2,\Z)\bsl \bH^2$, this operator was given explicitly in \cite{Lindenstrauss-Venkatesh} and it was used in \cite{Booker-Str-Venkatesh} to check the numerical computation rigorously. 
For $n\geq 3$, although the operator was defined in a more general case, it is quite complicated and it is not easy to describe this operator explicitly. 
In this paper, we give an explicit construction of the convolution operator whose image is purely cuspidal. The construction goes as follows. 

Fix a prime $p$. For $j=1, 2$,  set $\ell_j=(\ell_{j, 1}, \ldots, \ell_{j, n})\in \C^n$, $\ell_{j, 1}+\cdots +\ell_{j, n}=0$. Let
	\begin{align}\label{e: annihilating_eigenvalue}
	&\widehat\natural_p^n(\ell_1, \ell_2)\\\notag
	&\quad := \prod_{k=1}^{\lfloor\frac{n}{2}\rfloor}\prod_{1\leq i_1<\cdots < i_k\leq n}\prod_{1\leq j_1< \cdots < j_k\leq n} \left(1-p^{-(\ell_{1, i_1}+\cdots +\ell_{1, i_k})-(\ell_{2, j_1}+\cdots +\ell_{2, j_k})}\right).
	\end{align}
In Lemma \ref{l:construction_annihilating},  we use a Paley-Wiener type theorem \cite{Helgasson:2000} for $\widehat\natural_p^n$ and define the annihilating operator $\natural_p^n$ to be a certain polynomial in convolution operators and  Hecke operators at a prime $p$. 
Then quasi-Maass forms are eigenfunctions of the operator $\natural_p^n$. 
For a given set of local representation $\Pi_M$, we have $\natural_p^n F_{\Pi_M} = \widehat\natural_p^n(\ell_{\pi_\infty}, \ell_{\pi_p})\cdot F_{\Pi_M}$ for $\pi_\infty, \pi_p\in \Pi_M$.  
	
As in \cite{Lindenstrauss-Venkatesh}, we prove the following theorem. 

\begin{thm}\label{t:image_cuspidal}
	The space of the image of the annihilating operator $\natural_p^n$ on smooth functions in $\LtwoH$ is cuspidal and infinite dimensional. So there are infinitely many Hecke-Maass forms in $\break \LtwoH$ which are not self-dual. 
\end{thm}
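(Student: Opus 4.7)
The proof follows the strategy of Lindenstrauss--Venkatesh \cite{Lindenstrauss-Venkatesh}. The plan has three steps: first, verify that $\natural_p^n$ annihilates the non-cuspidal spectrum of $\LtwoH$; second, show that the cuspidal image is still infinite-dimensional; third, extract infinitely many non-self-dual Hecke-Maass forms.

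\textbf{Step 1 (image is cuspidal).} By the Langlands spectral decomposition, $\LtwoH=\LtwoHcusp\oplus\LtwoHresi\oplus\LtwoHcont$, and the latter two summands are generated by Eisenstein series $E(z,\phi,s)$ induced from cuspidal data $\phi$ on a proper Levi subgroup $M=GL(n_1)\times\cdots\times GL(n_r)$ with $r\geq 2$. Such an $E(z,\phi,s)$ is a Hecke-Casimir eigenfunction whose Satake parameters $\ell^{\Eis}_\infty$ and $\ell^{\Eis}_p$ share a common block structure inherited from the partition $n=n_1+\cdots+n_r$: at each place the tuple splits into $r$ blocks of sizes $n_i$ obtained from the Satake parameters of $\phi_i$ shifted by a common continuous parameter $s_i$, subject to $\sum_i n_i s_i=0$. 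The product defining $\widehat\natural_p^n$ is engineered so that for any such shared block structure at least one factor of~(\ref{e: annihilating_eigenvalue}) vanishes identically in the continuous parameter $s$ and in the cuspidal Satake data of the $\phi_i$. Hence $\widehat\natural_p^n(\ell^{\Eis}_\infty,\ell^{\Eis}_p)=0$ for every Eisenstein series, so $\natural_p^n$ kills $\LtwoHresi\oplus\LtwoHcont$ and the image of $\natural_p^n$ on smooth functions in $\LtwoH$ lies in $\LtwoHcusp$.

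\textbf{Step 2 (image is infinite-dimensional).} M\"uller's Weyl law \cite{Muller} guarantees infinitely many Hecke-Maass cusp forms in $\LtwoHcusp$. For each such $\psi$ with parameters $(\ell_\infty(\psi),\ell_p(\psi))$, one has $\natural_p^n\psi=\widehat\natural_p^n(\ell_\infty(\psi),\ell_p(\psi))\cdot\psi$. The zero locus of $\widehat\natural_p^n$ is a proper algebraic subset of the parameter space, so only a thin subfamily of Hecke-Maass cusp forms can lie on it. Infinitely many survive with $\natural_p^n\psi\neq 0$; by multiplicity one, their images $\natural_p^n\psi$ are linearly independent and span an infinite-dimensional subspace of $\LtwoHcusp$ inside the image of $\natural_p^n$.

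\textbf{Step 3 (non-self-duality) and the main obstacle.} Self-duality of a Hecke-Maass cusp form imposes the multiset identity $\{\ell_{\infty,i}\}=\{-\ell_{\infty,i}\}$, defining a proper closed subvariety of the parameter space; among the infinitely many cusp forms of Step~2, infinitely many have parameters off this self-dual locus, yielding infinitely many non-self-dual Hecke-Maass forms for $n\geq 3$. The conceptually delicate point is the annihilation assertion in Step~1: for every proper partition $n=n_1+\cdots+n_r$ and every admissible shift vector $(s_1,\ldots,s_r)$ satisfying $\sum_i n_i s_i=0$, one must exhibit a choice of $k\leq\lfloor n/2\rfloor$ and $k$-element subsets $I,J\subset\{1,\ldots,n\}$ whose block-sums force the corresponding factor of~(\ref{e: annihilating_eigenvalue}) to vanish independently of the Satake parameters of the cuspidal $\phi_i$. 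Carrying out this case-analysis over all Levi types is the technical heart of the construction of $\natural_p^n$; once it is in place, the remainder of the argument (Weyl's law combined with the properness of the self-dual subvariety) is routine.
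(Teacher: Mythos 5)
Your Step 1 is the right idea in outline but you stop exactly where the paper does the actual work: you flag "one must exhibit a choice of $k$ and subsets $I,J$" as the technical heart and defer it. The paper's Lemma \ref{l:Eisenstein_type_all} is the missing ingredient: it shows that the continuous parameter $t_i$ (your $s_i$) enters the Eisenstein series' parameters at $\infty$ and at a finite place $p$ with \emph{opposite signs}, i.e.\ $\ell_{\infty,j}(E)=\bigl(\frac{n_i-n}{2}+t_i+\eta_i\bigr)+\ell_{\infty,j-\eta_i}(\phi_i)$ but $\ell_{p,j}(E)=-\bigl(\frac{n_i-n}{2}+t_i+\eta_i\bigr)+\ell_{p,j-\eta_i}(\phi_i)$. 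Once you know this, the case analysis is trivial: for a proper partition the smallest block has size $n_i\le\lfloor n/2\rfloor$, and taking $I=J$ to be that block, the shift cancels and $\sum_{j\in I}\ell_{\infty,j}(E)+\sum_{j\in I}\ell_{p,j}(E)=\sum_{j'}\ell_{\infty,j'}(\phi_i)+\sum_{j'}\ell_{p,j'}(\phi_i)=0$ because the $\phi_i$-parameters have trace zero. So the vanishing of $\widehat\natural_p^n$ on every Eisenstein series is one line, not a case analysis, but you need the sign-flip observation — which you never state.

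The more serious gap is in Step 2. You claim that since the zero locus of $\widehat\natural_p^n$ is a proper algebraic subvariety of parameter space, "only a thin subfamily" of Hecke-Maass forms can lie on it and "infinitely many survive." This does not follow: the Satake parameters of cusp forms form a discrete, countable set, and nothing a priori prevents \emph{all} of them from lying on the subvariety. Indeed, the zero locus contains all self-dual forms, and the assertion that infinitely many cusp forms are \emph{not} self-dual is precisely the conclusion the theorem is trying to prove — so your argument is circular. The paper avoids this by a genuinely different, constructive route: it assumes for contradiction that the image of $\natural_p^n H_\delta$ is finite-dimensional with basis $\{u_1,\dots,u_B\}$, picks parameters $\alpha_\infty,\alpha_p$ with $\widehat\natural_p^n(\alpha_\infty,\alpha_p)\neq 0$ and $\alpha_\infty$ distinct from all archimedean types $\mu_j$ of the $u_j$, forms the quasi-Maass form $F$ for $\{\alpha_\infty,\alpha_p\}$ and its automorphic lifting $\widetilde F$, and observes via Lemma \ref{l:annihilate_image_nonzero} that on a high Siegel set $\natural_p^n H_\delta\widetilde F$ equals a nonzero multiple of $F$, hence is annihilated by $\mathcal C_n^{(i)}-\lambda_\infty^{(i)}(\alpha_\infty)$ there. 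Applying these operators repeatedly and using that $\lambda_\infty^{(i)}(\mu_j)\neq\lambda_\infty^{(i)}(\alpha_\infty)$ for some $i$ lets one eliminate the $u_j$ one by one and force some $u_j\equiv 0$ on the Siegel set — a contradiction. This Vandermonde-type elimination does not invoke Weyl's law at all, and is the step you would need to supply.
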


We apply the annihilating operator $\natural_p^n$ to $\widetilde F_{\Pi_M}$ to construct a non-trivial cuspidal function. Since $\natural_p^n$ is a polynomial in convolution operators associated to some compactly supported distributions, we need to make the function $\widetilde F_{\Pi_M}$ to be smooth. 

For any $g\in SL(n, \R)$, by Polar decomposition, there exist $\xi_1, \xi_2\in SO(n, \R)$ and $(a_1, \ldots, a_n)\in \R^n$, $a_1+\cdots +a_n=0$ such that $g = \xi_1 \sm e^{a_1} & & \\ & \ddots & \\ & & e^{a_n}\esm \xi_2$. As in \cite{Jorgenson-Lang}, we define a polar height $\sigma(g)\geq0$ to be
	\begin{align}\label{e: polar_height}
	\sigma(g):= \sqrt{a_1^2+\cdots +a_n^2}, \quad\text{ for any }g\in SL(n, \R).
	\end{align}
Then $\sigma(g^{-1}) = \sigma(g)$ and $\sigma(g_1g_2) \leq \sigma(g_1)+\sigma(g_2)$ for any $g_1, g_2\in SL(n, \R)$. For any $\delta>0$, define
	\begin{align}\label{e:Bdelta}
	B_\delta:=\left\{g\in SL(n, \R)\;|\; \sigma(g)<\delta\right\}.
	\end{align}

For $\delta>0$, let $H_\delta$ be a standard bump function with ${\rm supp}(H_\delta)\subset B_\delta$. 
Then $\widetilde F_{\Pi_M}* H_\delta(g)=\int_{SL(n, \R)}\widetilde F_{\Pi_M}(gh^{-1})H_\delta(h)\; dh$ is a smooth function. By Lemma \ref{l:lowerbd_bump_funct}, if $0<\delta\leq \ln\left(\frac{n(n+6)}{8}\left(\sum_{j=1}^n \left|\ell_{\pi_\infty, j}+\frac{n-2j+1}{2}\right|\right)^{-1}+1\right)$, then $\widetilde F_{\Pi_M}* H_\delta$ is non-trivial. So $\natural_p^n(\widetilde F_{\Pi_M}*H_\delta)$ is a cuspidal function by Theorem \ref{t:image_cuspidal}. 

The quasi-Maass form $F_{\Pi_M}$, and its automorphic and cuspidal liftings play important roles in the approximate converse theorem. 

\subsection{Approximate Converse Theorem}
In this section we present an approximate converse theorem which measures how close a given set of irreducible admissible unramified unitary generic local representations of $GL(n)$ is to a global cuspidal representation.

For a finite prime $q$ and for $1\leq j\leq \lfloor\frac{n}{2}\rfloor$, let $G_q^{(j)}$ be a finite set of matrices which generate the Hecke operator $T_q^{(j)}$ defined in Definition \ref{d:extend_Hecke} by left translations. Let $\sharp(T_q^{(j)})$ be the number of elements in $G_q^{(j)}$. For a subset $V\subset \bH^n$, let
	$$T_q^{(j)}V := \bigcup_{\gamma\in G_q^{(j)}} \left\{\gamma z\;\left|\; z\in V \right.\right\}$$
	and
	$$(T_q^{(j)})^{-1}V := \bigcup_{\gamma\in G_q^{(j)}}\left\{z\in \bH^n\;\left|\; \gamma z\in V\right.\right\}\;.$$

\begin{thm}[Approximate Converse Theorem]\label{t:main} Let $M$ be a set of places of $\Q$ including $\infty$ properly and $S\subset M$ be a finite subset including $\infty$.
Assume that 
$\widehat\natural_p^n(\ell_{\pi_\infty}, \ell_{\pi_p})\neq 0$ for some $p\in M$ and $\pi_\infty, \pi_p\in \Pi_M$. 

For $0<\delta\leq \ln\left(\frac{n(n+6)}{8}\left(\sum_{j=1}^n \left|\ell_{\pi_\infty, j}+\frac{n-2j+1}{2}\right|\right)^{-1}+1\right)$, there exists an unramified cuspidal representation $\pi$ of $\A^\times\bsl GL(n, \A)$ such that 
	\begin{align}\label{e:main}
	&d_S(\pi, \Pi_M) <\underset{z\in B_2}{\rm sup}\left|F_{\Pi_M}(z)-\widetilde F_{\Pi_M}(z)\right|^2 \\\notag
	&\quad \times \frac{4\left(p^{- \frac{n^2-1}{2(n^2+1)}} + p^{ \frac{n^2-1}{2(n^2+1)}}\right)^{n2^{n-1}} \cdot {\rm Vol}(B_1) \cdot(A_\infty+A_{S, {\rm finite}})}{\left|\widehat\natural_p^n (\ell_{\pi_\infty}, \ell_{\pi_p})\right|^2 \cdot\int\limits_{e^{4(2^{n-1}\ln p+\delta)}}^\infty \cdots\int\limits_{e^{4(2^{n-1}\ln p+\delta)}}^\infty \left|W_J(y; \ell_{\pi_\infty}, 1)\right|^2\; d^*y},
	\end{align}
where
	$$A_\infty = \sum_{j=1}^{n-1}\left(\int_{SL(n, \R)}\left|\left(\mathcal C_n^{(j)}-\lambda_\infty^{(j)}(\ell_{\pi_\infty})\right) H_\delta(g)\right|\; dg\right)^2\;,$$
	$$A_{S, {\rm finite}}=\left\{\begin{array}{ll}
	0, & \text{ if }S=\left\{\infty\right\}\;,\\
	e^{\frac{n(n+6)}{4}\delta}\sum\limits_{q\in S, \atop \text{ finite }}\sum\limits_{j=1}^{\lfloor\frac{n}{2}\rfloor}\left(\sharp T_q^{(j)}\right)^2, & \text{ otherwise,}
	\end{array}\right.$$
	$$B_1=\left\{\begin{array}{ll}
	\left(\bH^n-\widetilde \fund^n\right)\cdot B_\delta\cap \fund^n, & \text{ if }S=\infty, \\
	\left(T_{q_{\rm max}}^{\lfloor\frac{n}{2}\rfloor}\right)^{-1}\left(\bH^n-\widetilde \fund^n\right)\cap \fund^n, & \text{ otherwise,}	
	\end{array}\right.$$
and
	$$B_2 =\left\{\begin{array}{ll}
	\fund^n\cdot B_\delta-\fund^n, & \text{ if }S=\left\{\infty\right\}, \\
	T_{q_{\rm max}}^{\lfloor\frac{n}{2}\rfloor} \fund^n -\fund^n, & \text{ otherwise .}
	\end{array}\right.$$
Here $q_{\rm max}=\max \left(S-\left\{\infty\right\}\right)$.
\end{thm}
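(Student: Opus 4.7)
The strategy is to promote the quasi-Maass form $F_{\Pi_M}$ to an explicit cuspidal test function, and then identify $\pi$ among the Hecke-Maass components of its spectral decomposition. Concretely, set
$$f_{\rm cusp} := \natural_p^n\left(\widetilde F_{\Pi_M} * H_\delta\right),$$
which by Theorem \ref{t:image_cuspidal} lies in the cuspidal subspace of $\LtwoH$ and is non-zero by Lemma \ref{l:lowerbd_bump_funct} combined with the assumed non-vanishing of $\widehat\natural_p^n(\ell_{\pi_\infty}, \ell_{\pi_p})$. Expand $f_{\rm cusp} = \sum_j c_j \phi_j$ in an orthonormal basis of Hecke-Maass cusp forms; a weighted-average argument gives
$$\min_j d_S(\phi_j, \Pi_M) \leq \frac{\sum_j |c_j|^2 d_S(\phi_j, \Pi_M)}{\sum_j |c_j|^2} = \frac{\langle D f_{\rm cusp}, f_{\rm cusp}\rangle}{\|f_{\rm cusp}\|_2^2},$$
where $D$ is the positive self-adjoint operator assembled from $\left|\mathcal C_n^{(k)} - \lambda_\infty^{(k)}(\ell_{\pi_\infty})\right|^2$ for $1\leq k\leq n-1$ and $\left|T_q^{(k)} - \lambda_q^{(k)}(\ell_{\pi_q})\right|^2$ for $q\in S$ finite, $1\leq k\leq \lfloor n/2\rfloor$. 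The desired $\pi$ is the cuspidal representation attached to the minimising $\phi_j$.

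For the numerator, the key identity --- valid because $F_{\Pi_M}$ is a joint eigenfunction of the Casimir and Hecke operators, and $\natural_p^n$ commutes with $\mathcal C_n^{(k)}$, with $T_q^{(k)}$ for $q\in M$, and with right-convolution by $H_\delta$ --- is
$$\left(\mathcal C_n^{(k)} - \lambda_\infty^{(k)}(\ell_{\pi_\infty})\right)f_{\rm cusp} = \natural_p^n\left((\widetilde F_{\Pi_M} - F_{\Pi_M}) * \left(\mathcal C_n^{(k)} - \lambda_\infty^{(k)}(\ell_{\pi_\infty})\right) H_\delta\right),$$
together with its Hecke analogue. Since $\widetilde F_{\Pi_M} - F_{\Pi_M}$ vanishes on $\widetilde\fund^n$, the convolution restricted to $\fund^n$ is supported in $B_1$ and bounded pointwise by $\sup_{z\in B_2}|\widetilde F_{\Pi_M}(z)-F_{\Pi_M}(z)|$ times the $L^1$-norm of the kernel; the two kinds of kernels produce, respectively, the $A_\infty$ and $A_{S,{\rm finite}}$ sums in the statement. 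Applying $\natural_p^n$ afterwards costs its operator norm on the cuspidal spectrum, which, after plugging the Luo--Rudnick--Sarnak bound $|{\rm Re}(\ell_{\phi,v,i})|\leq \frac{n^2-1}{2(n^2+1)}$ into each of the $n2^{n-1}$ factors of $\widehat\natural_p^n$, yields the prefactor $\left(p^{-(n^2-1)/(2(n^2+1))} + p^{(n^2-1)/(2(n^2+1))}\right)^{n 2^{n-1}}$.

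For the denominator, I would use the Whittaker--Fourier expansion (\ref{e:Quasi-Maass}). On $\widetilde\fund^n$ one has $\widetilde F_{\Pi_M}=F_{\Pi_M}$, so its $(m_1,\ldots,m_{n-1})=(1,\ldots,1)$ Fourier coefficient is exactly $W_J(\,\cdot\,;\ell_{\pi_\infty},1)$. The annihilating operator scales this coefficient by $\widehat\natural_p^n(\ell_{\pi_\infty}, \ell_{\pi_p})$ (this is where the hypothesis enters), while convolution with $H_\delta$ and the Hecke shifts inside $\natural_p^n$ translate the $y$-coordinates by factors at most $e^\delta$ and $p^{2^{n-1}}$ respectively. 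A Parseval-type estimate on the Siegel-type region $\{y_i \geq e^{4(2^{n-1}\ln p +\delta)}\}$ that survives these shifts then produces
$$\|f_{\rm cusp}\|_2^2 \geq \frac{1}{4}\left|\widehat\natural_p^n(\ell_{\pi_\infty}, \ell_{\pi_p})\right|^2 \int_{e^{4(2^{n-1}\ln p+\delta)}}^\infty \cdots \int_{e^{4(2^{n-1}\ln p+\delta)}}^\infty |W_J(y;\ell_{\pi_\infty},1)|^2\, d^*y,$$
which when inserted into the quotient yields exactly the bound (\ref{e:main}).

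The hard step will be this Whittaker lower bound. The operator $\natural_p^n$ is a polynomial in Hecke operators and convolution kernels, each summand of which permutes the Fourier indices $(m_1,\ldots,m_{n-1})$ and shifts the $y$-coordinates, so verifying that the $(1,\ldots,1)$-mode of $f_{\rm cusp}$ survives with the exact scalar $\widehat\natural_p^n(\ell_{\pi_\infty},\ell_{\pi_p})$ --- without cancellation against neighbouring Fourier modes --- requires a careful combinatorial accounting of the action of $\natural_p^n$ on the Whittaker transform, together with a verification that the cumulative $y$-shift is exactly the one that cuts off the integration at $e^{4(2^{n-1}\ln p+\delta)}$. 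All of the other steps reduce to straightforward (if bookkeeping-heavy) estimates on the supports of convolutions against translates of $\fund^n$ and to the operator-norm bound for $\natural_p^n$.
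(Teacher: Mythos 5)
Your overall strategy is the same as the paper's: form the cuspidal test function $f_{\rm cusp}=\natural_p^n(\widetilde F_{\Pi_M}*H_\delta)$, reduce the theorem to a Rayleigh-type quotient (your weighted-average bound is the same estimate the paper packages as Lemma \ref{l:strategy} via the projection onto $\mathcal U_\epsilon(\Pi_M)$), bound the numerator via the fact that $\widetilde F_{\Pi_M}-F_{\Pi_M}$ vanishes on $\widetilde\fund^n$, the $L^1$-norms of the kernels, and the operator norm of $\natural_p^n$ on the cuspidal spectrum (Lemma \ref{l:annihilating_norm}), and bound the denominator by isolating a single Fourier--Whittaker mode on a deep Siegel set. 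Every constant you name (the LRS exponent $\tfrac{n^2-1}{2(n^2+1)}$, the power $n2^{n-1}$, the cutoff $e^{4(2^{n-1}\ln p+\delta)}$, $A_\infty$, $A_{S,{\rm finite}}$, $B_1$, $B_2$) appears for exactly the reason you give.

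The one genuine gap is the step you flag as ``hard,'' and the resolution is simpler than the ``careful combinatorial accounting of the action of $\natural_p^n$ on the Whittaker transform'' you anticipate. The paper's Lemma \ref{l:annihilate_image_nonzero} establishes the \emph{pointwise} identity $\natural_p^n H_\delta \widetilde F_{\Pi_M}(z)=\widehat\natural_p^n(\ell_{\pi_\infty},\ell_{\pi_p})\,\widehat H_\delta(\ell_{\pi_\infty})\,F_{\Pi_M}(z)$ for all $z\in\Sigma_{e^T}$ once $T>4(2^{n-1}\ln p+\delta)$. The proof is a locality argument, not a mode-by-mode computation: the quasi-Maass form $F_{\Pi_M}$ is, by construction, a joint eigenfunction of every Casimir and Hecke operator on all of $\bH^n$, so $\natural_p^n H_\delta F_{\Pi_M}=\widehat\natural_p^n\widehat H_\delta F_{\Pi_M}$ identically; and for $z\in\Sigma_{e^T}$, the kernel of $\natural_p^n H_\delta$ samples $\widetilde F_{\Pi_M}$ only at points $h$ whose Iwasawa $y$-coordinates stay $>1$ (by Lemma \ref{l:Relations_Iwasawa_Polar}, since the largest Hecke shift in $\natural_p^n$ is $p^{2^{n-1}}$ and the convolution support is $B_\delta$), i.e.\ at $h\in\Sigma_1\subset\widetilde\fund^n$ where $\widetilde F_{\Pi_M}=F_{\Pi_M}$. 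There is therefore no Fourier-mode mixing to track: once the pointwise identity holds on $\Sigma_{e^T}$, the $(1,\ldots,1)$-coefficient of $f_{\rm cusp}$ there is literally $\widehat\natural_p^n\widehat H_\delta\, W_J(y;\ell_{\pi_\infty},1)$, and the Parseval-type lower bound of Lemma \ref{l:lowerbd} (which retains only that one mode on $[e^T,\infty)^{n-1}$, valid because $f_{\rm cusp}$ is cuspidal and $\Sigma_{1,\frac12}\subset\fund^n$) gives the stated denominator. Supplying this one lemma completes your argument.
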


\begin{rmk}
\begin{enumerate}
	\item If the right hand side of (\ref{e:main}) is sufficiently small for sufficiently large $S$, then by Remark 8 \cite{Brumley}, the genuine cuspidal representation $\pi$ can be uniquely determined. 
	
	\item For an unramified cuspidal representation $\sigma\cong\otimes_v'\sigma_v$ of $\A^\times\bsl GL(n, \A)$, 
	define an analytic conductor $\mathcal C(\sigma):=\prod\limits_{j=1}^n \left(1+|\ell_{\sigma_\infty, j}|\right)$ as in \cite{Brumley}. 
	Fix $Q\geq 2$. By \cite{Brumley}, for any unramified cuspidal representation $\sigma\cong\otimes_v'\sigma_v$ with $\mathcal C(\sigma)\leq Q$, there exists a prime $p\ll \log Q$ such that $\left|\widehat \natural_p^n\left(\ell_{\sigma_\infty}, \ell_{\sigma_p}\right)\right|$ is sufficiently large.
 
\end{enumerate}
\end{rmk}

There are two important ingredients in the proof of Theorem \ref{t:main}: the annihilating operator $\natural_p^n$ and the spectral decomposition of $\break\LtwoH$.  
For a given set of local representations $\Pi_M$, we can construct a cuspidal function explicitly, by applying $\natural_p^n$ to the automorphic lifting of a quasi-Maass form $F_{\Pi_M}$. The procedure is described in the previous section. Since the function is cuspidal, it is generated by Hecke-Maass forms for $SL(n, \Z)$. We can get (\ref{e:main}) by applying the Casimir operators and Hecke operators and compare the eigenvalues. 

One could try to generalize this theorem by relaxing the globally unramified condition.

Another particular question attracts our attention. How close can a given positive real number get to a Laplace eigenvalue of an actual Maass form? 
The following is a sample case of Theorem \ref{t:main}, which can be regarded as an answer to this question. 

\begin{thm}\label{t:main_laplacian} 
Let $M$ be a set of places of $\Q$ including $\infty$ properly. 
Assume that $\widehat\natural_p^n(\ell_{\pi_\infty}, \ell_{\pi_p})\neq 0$ for at least one prime $p\in M$ and $\pi_\infty, \pi_p\in \Pi_M$.
Then the Laplace eigenvalue $\lambda_n(\ell_{\pi_\infty})\in \C$ of $F_{\Pi_M}$ satisfies the following: 
for any $0<\delta\leq \ln\left(\frac{n(n+6)}{8}\left(\sum_{j=1}^n \left|\ell_{\pi_\infty, j}+\frac{n-2j+1}{2}\right|\right)^{-1}+1\right)$, 
there exists an eigenvalue $\lambda$ of the Laplacian of a Maass form for $SL(n, \Z)$ such that	
\begin{align*}
	&\left|\lambda-\lambda_n(\ell_{\pi_\infty})\right|^2\\
	&\quad < \;\underset{z\in \fund_n\cdot B_\delta}{\rm sup}\left|\widetilde F_{\Pi_M}(z)-F_{\Pi_M}(z)\right|^2\\
	&\quad\quad\quad \times \frac{\left(p^{- \frac{n^2-1}{2(n^2+1)}} + p^{ \frac{n^2-1}{2(n^2+1)}}\right)^{n2^{n-1}}\cdot {\rm Vol}\left(\left((\bH^n-\widetilde\fund^n)\cdot B_\delta\right) \cap \fund^n\right)}{\left|\widehat\natural_p^n(\ell_{\pi_\infty}, \ell_{\pi_p})\right|^2 \cdot \int\limits_{e^{4(2^{n-1}\ln p+\delta)}}^\infty\cdots\int\limits_{e^{4(2^{n-1}\ln p+\delta)}}^\infty\left|W_J(y;\ell_{\pi_\infty}, 1)\right|^2\; d^*y}\\
	&\quad\quad \quad \times
	\left[
	\frac{6(1+e^{2\delta})}{\delta^4} C_\delta\cdot {\rm Vol}(B_\delta) + 2\lambda_n(\ell_{\pi_\infty})
	\right]^2
	\end{align*}
where 
	$$C_\delta = \left(\int_{B_\delta} e^{-\frac{1}{1-(\delta^{-1}\sigma(g))^2}}\; dg\right)^{-1}\;.$$
\end{thm}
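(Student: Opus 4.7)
The strategy is to specialize the proof of Theorem \ref{t:main} to the $j=1$ slot corresponding to the Laplacian $\Delta_n = -\mathcal C_n^{(1)}$, exploiting the fact that the quasi-Maass form $F_{\Pi_M}$ is already an exact eigenfunction of $\Delta_n$ on all of $\bH^n$ with eigenvalue $\lambda_n(\ell_{\pi_\infty})$. Set $f := \natural_p^n(\widetilde F_{\Pi_M} * H_\delta)$, which is smooth, cuspidal, and non-trivial for $\delta$ in the stated range by Theorem \ref{t:image_cuspidal} and Lemma \ref{l:lowerbd_bump_funct}. Expanding $f = \sum_k c_k \phi_k$ in an orthonormal basis of Hecke-Maass cusp forms with Laplace eigenvalues $\lambda_k$, self-adjointness of $\Delta_n$ yields the min-max bound
\begin{equation*}
\min_k |\lambda_k - \lambda_n(\ell_{\pi_\infty})|^2 \;\leq\; \frac{\|(\Delta_n - \lambda_n(\ell_{\pi_\infty}))f\|_{L^2}^2}{\|f\|_{L^2}^2},
\end{equation*}
reducing the problem to upper-bounding the numerator and lower-bounding the denominator.

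For the numerator, the crucial point is that $(\Delta_n - \lambda_n(\ell_{\pi_\infty}))(F_{\Pi_M} * H_\delta) \equiv 0$, so by left-invariance of $\Delta_n$ one may swap the derivative onto $H_\delta$ and replace $\widetilde F_{\Pi_M}$ by the difference $\widetilde F_{\Pi_M} - F_{\Pi_M}$ (which vanishes on $\widetilde\fund^n$). This gives the pointwise bound
\begin{equation*}
|(\Delta_n - \lambda_n(\ell_{\pi_\infty}))(\widetilde F_{\Pi_M} * H_\delta)(z)| \;\leq\; \sup_{w \in \fund^n \cdot B_\delta}|\widetilde F_{\Pi_M}(w) - F_{\Pi_M}(w)| \cdot \|(\Delta_n - \lambda_n(\ell_{\pi_\infty}))H_\delta\|_{L^1},
\end{equation*}
with support contained in $((\bH^n - \widetilde\fund^n) \cdot B_\delta) \cap \fund^n$. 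Expressing $\Delta_n$ in polar coordinates and differentiating the explicit bump $H_\delta \propto \exp(-1/(1-(\delta^{-1}\sigma)^2))$ produces $\|\Delta_n H_\delta\|_\infty \leq \frac{6(1+e^{2\delta})}{\delta^4}C_\delta$, which together with $\|H_\delta\|_1 = 1$ yields the bracket $\frac{6(1+e^{2\delta})}{\delta^4}C_\delta \Vol(B_\delta) + 2\lambda_n(\ell_{\pi_\infty})$ appearing in the theorem. Applying $\natural_p^n$, which commutes with $\Delta_n$ and is a polynomial in the Hecke operators $T_p^{(j)}$ and convolutions, introduces a bounded operator factor controlled by the Luo-Rudnick-Sarnak bound on Hecke eigenvalues of unramified automorphic forms, giving the factor $(p^{-\alpha}+p^{\alpha})^{n2^{n-1}}$ with $\alpha = (n^2-1)/(2(n^2+1))$.

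For the denominator, restrict attention to the deep Siegel region $y_i > e^{4(2^{n-1}\ln p + \delta)}$, chosen so that neither the Hecke translate of scale $p^{2^{n-1}}$ nor the convolution translate of scale $e^{\delta}$ can leave $\widetilde\fund^n$. On this region $\widetilde F_{\Pi_M} * H_\delta = F_{\Pi_M} * H_\delta$, and $\natural_p^n$ acts as scalar multiplication by $\widehat\natural_p^n(\ell_{\pi_\infty},\ell_{\pi_p})$. The Whittaker-Fourier expansion of $F_{\Pi_M}$, combined with the orthogonality of unipotent characters, then isolates the contribution of the Whittaker function, giving the lower bound
\begin{equation*}
\|f\|_{L^2}^2 \;\geq\; |\widehat\natural_p^n(\ell_{\pi_\infty}, \ell_{\pi_p})|^2 \int_{e^{4(2^{n-1}\ln p + \delta)}}^{\infty} \cdots \int_{e^{4(2^{n-1}\ln p + \delta)}}^{\infty} |W_J(y;\ell_{\pi_\infty},1)|^2 \, d^*y
\end{equation*}
after integrating out the remaining unipotent variables. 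Dividing the numerator bound by this denominator assembles the inequality claimed.

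I expect the main obstacles to be, first, the explicit $L^\infty$ computation of $\Delta_n H_\delta$ in the polar coordinate system on $SL(n,\R)$ to produce the sharp constant $\frac{6(1+e^{2\delta})}{\delta^4} C_\delta$, which requires writing $\Delta_n$ in terms of the coordinates $(a_1, \ldots, a_n)$ and exploiting $SO(n,\R)$-bi-invariance of $H_\delta$; and second, bounding the operator norm of $\natural_p^n$ in a form that combines cleanly with the pointwise error estimate, as $\natural_p^n$ is a large polynomial in Hecke operators whose spectral eigenvalues must be uniformly controlled by Luo-Rudnick-Sarnak. The geometric bookkeeping needed to identify the support region $(\fund^n \cdot B_\delta) \setminus \fund^n$ and the deep Siegel region also requires care, but these are essentially the same computations that underlie Theorem \ref{t:main}.
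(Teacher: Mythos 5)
Your proposal is correct and takes essentially the same route as the paper: specialize Theorem \ref{t:main} (via Lemma \ref{l:strategy}) to $S=\{\infty\}$ and the Laplacian slot, use the explicit bump function (\ref{e:bump_function_example}), and compute $\Delta_n H_\delta$ via the Jorgenson--Lang polar-coordinate formula. The only slip is in the constant bookkeeping: the $L^\infty$ bound on $\Delta_n H_\delta$ is $\frac{3(1+e^{2\delta})}{\delta^4}C_\delta$ (not $6$), and the final bracket's $6=2\cdot 3$ and $2\lambda_n$ arise from absorbing $\left|\widehat H_\delta(\ell_{\pi_\infty})\right|^{-1}<2$ from Lemma \ref{l:lowerbd_bump_funct} (equivalently, the factor of $4$ in Theorem \ref{t:main}'s numerator) into the squared bracket, not from the derivative estimate on $H_\delta$ itself or from $\lVert H_\delta\rVert_1=1$ as you suggest.
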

We choose a ``good" bump function $H_\delta$ to prove this theorem.

Recently, Booker and his student Bian computed the Laplace and Hecke eigenvalues for Maass forms on $SL(3, \Z)\bsl \bH^3$ \cite{Bian}, \cite{Booker}. Moreover, Mezhericher presented an algorithm for evaluating a (quasi-) Maass form for $SL(3, \Z)$ in his thesis \cite{Mezhericher}. We expect that we might use the approximate converse theorem to certify Bian's computations. 

\subsection*{Acknowledge} This paper came about through the suggestion of my thesis advisor, Dorian Goldfeld, and I would like to thank to him for his invaluable advice and continual encouragement in this project. Also I am indebted to Andrew Booker, Sug Woo Shin, Andreas Str\"ombergsson and Akshay Venkatesh for helpful comments.

\section{Quasi-Maass forms and the Annihilating operator}

We review basic facts about the Hecke-Maass forms in the first two sections. The main reference is \cite{Goldfeld:2006}.

In the introduction, we define quasi-Maass forms corresponding to the given set of local representations. By applying the annihilating operator $\natural_p^n$, it is possible to write down a cuspidal function explicitly. We study about the quasi-Maass forms and the annihilating operator in  \S\ref{ss:Quasi-Maass_forms}.

\subsection{Preliminaries}
Let $n\geq 2$ be an integer. Let 
	$$A^0(n, \R) =\left\{\left.\bpm e^{a_1} & & \\ & \ddots & \\  & & e^{a_n}\ebpm\;\right|\; {a_1, \ldots, a_n\in \R, \atop a_1+\cdots +a_n=0}\right\}\subset SL(n, \R)$$
then 
	$$\mathfrak a(n)=\left\{a=(a_1, \ldots, a_n)\in \R^n\;|\; a_1+\cdots +a_n=0\right\}$$ 
is isomorphic to the Lie algebra of $A^0(n, \R)$. For each $a = (a_1, \ldots, a_n)\in \mathfrak a(n)$, define
	$$\exp a = \bpm e^{a_1} & & \\ & \ddots & \\ & & e^{a_n}\ebpm \in A^0(n, \R).$$ 

Let
	$$N(n, \R) := \left\{\left. \bpm 1 & x_{1, 2} & x_{1, 3} & \ldots & x_{1, n}\\ & 1 & x_{2, 3} & \ldots & x_{2, n}\\ & & \ddots & &\vdots\\ & & & 1 & x_{n-1, n}\\ & & & & 1\ebpm\;\right| \; x_{i, j}\in \R, \; \text{ for }1\leq i< j\leq n\right\}.$$
Define the generalized upper half plane $\bH^n$ to be a set of matrices $z=xy\in SL(n, \R)$ 
such that  
	\begin{align}\label{e:Iwasawa_x}
	x = \bpm 1 & x_{1, 2} & x_{1, 3} & \ldots & x_{1, n}\\ & 1 & x_{2, 3} & \ldots & x_{2, n}\\ & & \ddots & &\vdots\\ & & & 1 & x_{n-1, n}\\ & & & & 1\ebpm\;\in \; N(n, \R)	
	\end{align}
and 
	\begin{align}\label{e:Iwasawa_y}
	&y=a_{y_1, \ldots, y_{n-1}}\\\notag
	&:=\left(\prod_{j=1}^{n-1}y_j^{n-j}\right)^{-\frac{1}{n}}\cdot \bpm y_1\cdots y_{n-1} & & & \\ & y_1\cdots y_{n-2} & & \\ & & \ddots & \\ & & & 1\ebpm\;\in \;A^0(n, \R)
	\end{align}
for $y_1, \ldots, y_{n-1}>0$. By the Iwasawa decomposition, any $g\in SL(n, \R)$ can be written uniquely as $g=xy \xi$ with $x\in N(n, \R)$,  $y\in A_0(n, \R)$ and $\xi\in SO(n, \R)$. So the generalized upper half plane $\bH^n$ can be identified with the quotient $SL(n, \R)/SO(n, \R)$.

Define ${\rm Iw}_Y(g)\in \mathfrak a(n)$ to be
	\begin{align}\label{e:Iw_Y}
	\exp({\rm Iw}_Y(g))= a_{y_1, \ldots, y_{n-1}}.
	\end{align}

\begin{lem}\label{l:Relations_Iwasawa_Polar}
For any $g\in SL(n, \R)$, let $a_{y_1, \ldots, y_{n-1}} = \exp({\rm Iw}_Y(g))$ for $y_1, \ldots, y_{n-1} > 0$. Then we have
	$$e^{-2\sigma(g)} \; \leq \; y_1\; \leq e^{2\sigma(g)}$$
	and
	$$e^{-4\sigma(g)}\; \leq \; y_j \; \leq e^{4\sigma(g)},\hskip 10pt\text{(for $j=2, \ldots, n-1$)}.$$
\end{lem}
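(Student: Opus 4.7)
Denote by $y_k'$ the $k$-th diagonal entry of $a_{y_1, \ldots, y_{n-1}}$, so that the explicit formula for $a_{y_1, \ldots, y_{n-1}}$ gives $y_j = y'_{n-j}/y'_{n-j+1}$ for $j = 1, \ldots, n-1$. The plan is to establish the uniform estimate
\[
e^{-\sigma(g)} \;\leq\; y_k' \;\leq\; e^{\sigma(g)} \qquad (k = 1, \ldots, n),
\]
from which $|\log y_j| \leq 2\sigma(g)$ for every $j$ follows at once, and both bounds of the lemma are immediate (with room to spare in the case $j \geq 2$).

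First I would prove the upper bound. Writing the Iwasawa decomposition $g = x y \xi$ with $x$ upper unitriangular, we have $gg^T = xy^2 x^T$, whence
\[
(gg^T)_{kk} \;=\; \sum_{j \geq k} x_{kj}^2 \, (y_j')^2 \;\geq\; (y_k')^2.
\]
On the other hand, the polar decomposition gives $gg^T = \xi_1 \exp(2a)\, \xi_1^T$, so $(gg^T)_{kk} = \sum_j (\xi_1)_{kj}^2 \, e^{2a_j}$ is a convex combination of the $e^{2a_j}$. Since $|a_j| \leq \sqrt{a_1^2+\cdots+a_n^2} = \sigma(g)$, this is at most $e^{2\sigma(g)}$, giving $y_k' \leq e^{\sigma(g)}$.

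For the matching lower bound, I would apply the upper bound just established to the matrix $w_0 g^{-T} w_0$, where $w_0$ is the anti-diagonal permutation matrix (the long Weyl element). A direct computation shows $g^{-T} = (x^{-1})^T y^{-1} \xi$; conjugation by $w_0$ sends the lower-unitriangular factor $(x^{-1})^T$ to an upper-unitriangular matrix and reverses the diagonal of $y^{-1}$, so $w_0 g^{-T} w_0$ admits a valid Iwasawa decomposition whose diagonal part is $\mathrm{diag}(1/y_n',\, 1/y_{n-1}',\, \ldots,\, 1/y_1')$. Since $w_0$ is orthogonal and $g^{-T}$ has the same singular values as $g^{-1}$ (which are the reciprocals of those of $g$), we also have $\sigma(w_0 g^{-T} w_0) = \sigma(g)$. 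Applying the upper bound to $w_0 g^{-T} w_0$ therefore yields $1/y_{n+1-k}' \leq e^{\sigma(g)}$, i.e. $y_k' \geq e^{-\sigma(g)}$ for every $k$.

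The main obstacle is precisely this lower bound: the inequality $(y_k')^2 \leq (gg^T)_{kk}$ is intrinsically one-sided, and elementary manipulations using wedge-norm identities together with the constraint $\prod_k y_k' = 1$ yield bounds that deteriorate linearly with the position $n+1-k$. The duality $g \mapsto w_0 g^{-T} w_0$ neatly converts a lower bound for $g$ into an upper bound for a matrix with the same polar height, and thus circumvents the asymmetry without invoking heavier machinery such as Kostant's convexity theorem.
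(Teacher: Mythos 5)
Your proof is correct, and it is a tighter, more carefully executed version of the argument the paper sketches. The paper also works from the identity $gg^{T} = x\,a_{y_1,\ldots,y_{n-1}}^{2}\,{}^t x = \xi_1 \exp(2a)\,{}^t\xi_1$ and compares diagonal entries, but its treatment of the lower bound is compressed into the remark ``since $\sigma(g^{-1})=\sigma(g)$''; your device of applying the upper-bound estimate to $w_0 g^{-T} w_0$ is exactly what is needed to make that remark rigorous (equivalently, one can observe directly that $(gg^T)^{-1} = x^{-T} a_{y_1,\ldots,y_{n-1}}^{-2} x^{-1}$ has a lower-unitriangular factor, so $(gg^T)^{-1}_{kk} \geq (y_k')^{-2}$, and its diagonal entries are convex combinations of the $e^{-2a_j}$). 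Where you genuinely improve on the paper is in the last step: the paper passes through the ratios $y_j'/y_n' = (y_1\cdots y_{n-j})^2$, incurring a factor of $2$ twice, which is why its stated bound for $j\geq 2$ is only $e^{\pm 4\sigma(g)}$. By instead using $y_j = y_{n-j}'/y_{n-j+1}'$ with each $y_k' \in [e^{-\sigma(g)}, e^{\sigma(g)}]$, you obtain the uniform bound $e^{-2\sigma(g)} \leq y_j \leq e^{2\sigma(g)}$ for \emph{every} $j$, which is strictly stronger than what the lemma asserts and of course implies it.
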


\begin{proof}[Proof of Lemma \ref{l:Relations_Iwasawa_Polar}]
By the Iwasawa decomposition and the polar decomposition, we have
	$$g=z\xi_{\rm Iw} = \xi_1\bpm e^{a_1} & & \\ & \ddots & \\  & & e^{a_n}\ebpm\xi_2, \quad (\xi_{\rm Iw}, \xi_1, \xi_2\in SO(n, \R)),$$
for $(a_1, \ldots, a_n)\in \mathfrak a(n)$, where $z=xa_{y_1, \ldots, y_{n-1}}\in \bH^n$ for $x\in N(n, \R)$ and $a_{y_1, \ldots, y_{n-1}} = \exp({\rm Iw}_Y(g))$.  Then
	$$z\cdot^tz = x\cdot a_{y_1, \ldots, y_{n-1}}^2\cdot^tx = k_1\cdot \bpm e^{2a_1} & & \\ & \ddots & \\ & & e^{2a_n}\ebpm \cdot ^tk_1$$
Let $y_n'=\prod_{j=1}^{n-1}y_j^{-\frac{2(n-j)}{n}}$ and $y_j'=y_n'\cdot(y_1\cdots y_{n-j})^2$ for $j=1, \ldots, n-1$. Comparing the diagonal parts, we have
	$$e^{-2\sigma(g)} \; \leq\; y_j' \;\leq\; e^{2\sigma(g)}, \hskip 20pt \text{(for $j=1, \ldots, n$)}$$
since $\sigma(g^{-1})=\sigma(g)$. Therefore we have
	$$e^{-4\sigma(g)} \leq y_j'\cdot y_n'^{-1} = (y_1\cdots y_{n-j})^2\leq e^{4\sigma(g)}$$
for $j=1, \ldots, n-1$. 
\end{proof}

Let $\mathfrak a^*(n)=\left\{\alpha=(\alpha_1, \ldots, \alpha_n)\in \R^n\;|\; \alpha_1+\cdots +\alpha_n=0\right\}$ 
be the dual space of $\mathfrak a(n)$ and $\mathfrak a^*_\C(n)=\mathfrak a^*(n)\otimes_\R\C$. For $\alpha, \alpha'\in \mathfrak a(n)$ or $\mathfrak a_\C^*(n)$, let
	$$\left<\alpha, \alpha'\right> = \sum_{j=1}^n \alpha_j\alpha'_j\;.$$
then $\left<, \right>$ is a (complex) symmetric bilinear form and it is positive definite on $\mathfrak a^*(n)$. For $\alpha\in \mathfrak a^*(n)$, we put $||\alpha||=\sqrt{\left<\alpha, \alpha\right>}$. Let $W_n$ denote the Weyl group of $(\R^\times\cap SL(n, \R))\bsl SL(n, \R)$, consisting of all $n\times n$ matrices in $SL(n, \Z)\cap SO(n, \R)$ which have exactly one $\pm 1$ in each row and column. The Weyl group $W_n$ acts on $\mathfrak a(n)$ and $\mathfrak a_\C^*(n)$ as a permutation group. 

Let $\mathfrak{gl}(n, \R)$ be the Lie algebra of $GL(n, \R)$ with the Lie bracket $[, ]$ given by $[X, Y] = XY-YX$ for $X, Y\in \mathfrak{gl}(n, \R)	$. The universal enveloping algebra of $\mathfrak{gl}(n, \C) = \mathfrak{gl}(n, \R)\otimes_\R \C$ can be realized as an algebra of differential operators $D_X$ acting on smooth functions $f: GL(n, \R)\to \C$. The action is given by
	$$D_X f(g) :=\left.\frac{\partial}{\partial t}f(g\exp(tX))\right|_{t=0}$$
for $X\in\mathfrak{gl}(n, \R)$ where $\exp(tX) = \sum_{n=0}^\infty \frac{(tX)^n}{n!}$. For $1\leq i, \; j\leq n$, let $E_{i, j}\in \mathfrak{gl}(n, \R)$ be the matrix with $1$ at the $(i, j)$th entry and $0$ elsewhere. Let $D_{i, j}=D_{E_{i, j}}$ for $1\leq i, \; j\leq n$. For $j=1, \ldots, n-1$, we define Casimir operators $\mathcal C_n^{(j)}$ given by
	\begin{align}\label{e:Casimir_op}
	\mathcal C_n^{(j)} = \frac{(n-j-1)!}{n!}\sum_{i_1=1}^n \cdots\sum_{i_j=1}^n D_{i_1, i_2}\circ D_{i_2, i_3}\circ\cdots \circ D_{i_{j+1}, i_1}
	\end{align}
where $\circ$ is the composition of differential operators. Let $\Delta_n := -\mathcal C_n^{(1)}$ be the Laplace operator. Let $\mathcal Z^n$ be the center of the universal enveloping algebra of $\mathfrak{gl}(n, \R)$. It is well known that $\mathcal Z^n \cong \R\left[\mathcal C_n^{(1)}, \ldots, \mathcal C_n^{(n-1)}\right]$.

There is a standard procedure to construct simultaneous eigenfunctions of all differential operators of $D\in\mathcal Z^n$. For $\ell = (\ell_1, \ldots, \ell_n)\in\mathfrak a^*_\C(n)$, define
	\begin{align}\label{e:eigenfunction_Casimir}
	\varphi_\ell(g) := \prod_{j=1}^{n-1}y_j^{\sum\limits_{k=1}^{n-j}\left(\ell_k+\frac{n-2k+1}{2}\right)} ,
	\end{align}
where $\exp\left({\rm Iw}_Y(g)\right)=a_{y_1, \ldots, y_{n-1}}$ for $g\in SL(n, \R)$. 
Then $\varphi_\ell$ is a simultaneous eigenfunction of $\mathcal Z^n$. For $j=1, \ldots, n-1$, define $\lambda_\infty^{(j)}(\ell)\in \C$ to be the eigenvalue of $\mathcal C_n^{(j)}$ for $\varphi_\ell$, such that
	\begin{align}\label{e:Casimir_eigenvalue}
	\mathcal C_n^{(j)}\varphi_\ell(g)=\lambda_\infty^{(j)}(\ell)\cdot\varphi_\ell(g), \hskip 20pt (\ell\in \mathfrak a_\C^*(n))\;.
	\end{align}
	
\begin{lem}\label{l:Laplacian_eigenvalue} Let $n\geq 2$ and $\ell=(\ell_1, \ldots, \ell_n)\in\mathfrak a_\C^*(n)$. The Laplace eigenvalue is
	\begin{align}\label{e:Laplacian_eigenvalue}
	\lambda_n(\ell)=-\lambda_\infty^{(1)}(\ell)=\frac{n+1}{12} -\frac{1}{n(n-1)}(\ell_1^2+\cdots+\ell_n^2)\;.
	\end{align}
\end{lem}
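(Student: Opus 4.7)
The plan is to compute $\lambda_\infty^{(1)}(\ell) = (\mathcal{C}_n^{(1)}\varphi_\ell)(I)$ directly; since $\varphi_\ell(I) = 1$, this evaluation yields the eigenvalue. Writing $\mu_i := \ell_i + \frac{n-2i+1}{2}$ and letting $a_1(g), \ldots, a_n(g)$ denote the diagonal entries of $\exp({\rm Iw}_Y(g))$, formula (\ref{e:eigenfunction_Casimir}) gives $\varphi_\ell(g) = \prod_{r=1}^n a_r(g)^{\mu_r}$ (using $\sum_r \mu_r = 0$). Expanding $\mathcal{C}_n^{(1)} = \frac{1}{n(n-1)}\sum_{i,j} D_{i,j}\circ D_{j,i}$, the task reduces to computing each $D_{i,j}D_{j,i}\varphi_\ell(I)$, i.e.\ the bi-degree $(1,1)$ coefficient of $(t,s) \mapsto \varphi_\ell(\exp(tE_{ij})\exp(sE_{ji}))$.

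For $i = j$ the matrix $\exp(tE_{ii})$ is already diagonal, so $\varphi_\ell(\exp(tE_{ii})) = e^{t\mu_i}$ and hence $D_{i,i}^2 \varphi_\ell(I) = \mu_i^2$. For off-diagonal pairs with $i < j$, put $M(t,s) = \exp(tE_{ij})\exp(sE_{ji}) = I + tE_{ij} + sE_{ji} + ts\,E_{ii}$, whose Gram matrix $MM^T$ differs from the identity only in the $2\times 2$ principal block indexed by $\{i, j\}$. Writing $d_m(M)$ for the determinant of the trailing $(n-m+1)\times(n-m+1)$ principal submatrix of $MM^T$, the formula $a_k(M)^2 = d_k(M)/d_{k+1}(M)$ yields (modulo higher-order terms) $a_k(M) = 1$ for $k \notin \{i, j\}$, $a_i(M) \approx 1 - s^2/2$, and $a_j(M) \approx 1 + s^2/2$. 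Hence $\varphi_\ell(M) = a_i(M)^{\mu_i}a_j(M)^{\mu_j}$ has no $ts$ term, proving $D_{i,j}D_{j,i}\varphi_\ell(I) = 0$. The commutator $[D_{i,j}, D_{j,i}] = D_{E_{ii} - E_{jj}}$ combined with $D_{E_{ii} - E_{jj}}\varphi_\ell(I) = \mu_i - \mu_j$ (obtained by applying the diagonal calculation to $\exp(t(E_{ii}-E_{jj}))$) then gives $D_{j,i}D_{i,j}\varphi_\ell(I) = \mu_j - \mu_i$.

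Summing all contributions,
\[
n(n-1)\,\lambda_\infty^{(1)}(\ell) \;=\; \sum_{i=1}^n \mu_i^2 \;+\; \sum_{i<j}(\mu_j - \mu_i) \;=\; \langle \mu, \mu \rangle - 2\langle \mu, \rho \rangle,
\]
where $\rho = (\rho_1, \ldots, \rho_n)$ with $\rho_i = \frac{n-2i+1}{2}$, because each $\mu_i$ appears in $\sum_{i<j}(\mu_j - \mu_i)$ with coefficient $(i-1) - (n-i) = -2\rho_i$. Substituting $\mu = \ell + \rho$ gives $\langle\mu, \mu\rangle - 2\langle\mu, \rho\rangle = \langle\ell, \ell\rangle - \langle\rho, \rho\rangle$, and the standard evaluation $\sum_{i=1}^n \rho_i^2 = n(n-1)(n+1)/12$ produces $\lambda_\infty^{(1)}(\ell) = \frac{1}{n(n-1)}\sum_i \ell_i^2 - \frac{n+1}{12}$, whence $\lambda_n(\ell) = -\lambda_\infty^{(1)}(\ell)$ equals the expression in (\ref{e:Laplacian_eigenvalue}).

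The main technical obstacle is the Iwasawa analysis of $M(t,s)$: verifying the $\{i, j\}$-block structure of $MM^T$, computing $d_m(M)$ for all $m$ (one finds $d_m = 1$ for $m \le i$ or $m > j$, and $d_m = 1 + s^2$ for $i < m \le j$, modulo higher order), and checking that the ratios $d_k/d_{k+1}$ collapse to $1$ for $k \notin \{i, j\}$ despite the intermediate minors being non-trivial.
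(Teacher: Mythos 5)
Your proof is correct and takes essentially the same approach as the paper: expand $\mathcal{C}_n^{(1)}$ into diagonal squares and off-diagonal pairs, kill one ordering of the mixed term, convert the other via the commutator $[D_{i,j},D_{j,i}]=D_{E_{ii}-E_{jj}}$, and sum the diagonal contributions to get $\langle\ell,\ell\rangle-\langle\rho,\rho\rangle$. The only difference is cosmetic: the paper evaluates $\Delta_n\varphi_\ell$ on $A^0(n,\R)$ and asserts the vanishing of the lower-triangular first derivative rather tersely, while you evaluate at the identity and verify $D_{i,j}D_{j,i}\varphi_\ell(I)=0$ by an explicit second-order Iwasawa/Gram computation for $\exp(tE_{ij})\exp(sE_{ji})$, which makes the cancellation of the $ts$-coefficient fully transparent.
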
	
\begin{proof}[Proof of Lemma \ref{l:Laplacian_eigenvalue}]
For any $y\in A^0(n, \R)$, consider $\Delta_n\varphi_\ell(y)$. Then
	\begin{multline*}
	\Delta_n\varphi_\ell(y) \\
	=-\frac{1}{n(n-1)}\left\{\sum_{j=1}^n D_{j, j}\circ D_{j, j} \varphi_\ell(y)+\sum_{1\leq i< j\leq n} \left(D_{i, j}\circ D_{j, i} +D_{j, i}\circ D_{i, j}\right) \varphi_\ell(y)\right\}.
	\end{multline*}
For any $x\in N(n, \R)$, we have $\varphi_\ell (yx) = \varphi_\ell(y)$. So $D_{j, i} \varphi_\ell(y)=0$ for $1\leq i<  j\leq n$. 
For $1\leq i <  j\leq n$, we have
	$$D_{i, j}\circ D_{j, i}+D_{j, i}\circ D_{i, j} = 2D_{i, j}\circ D_{j, i} + D_{j, j}-D_{i, i}\;.$$
Therefore 
	\begin{multline*}
	\Delta_n\varphi_\ell(y) = -\frac{1}{n(n-1)}\left\{\sum_{j=1}^n D_{j, j}^2 \varphi_\ell(y)+\sum_{1\leq i\;<\; j\leq n}(D_{j, j}-D_{i, i})\varphi_\ell(y)\right\}\\
	=-\frac{1}{n(n-1)}\left\{\sum_{j=1}^n (D_{j, j}^2-(n-2j+1)D_{j, j})\varphi_\ell(y)\right\}\\
	=-\frac{1}{n(n-1)}\sum_{j=1}^n \left\{\left(\frac{n-2j+1}{2}+\ell_j\right)^2 -(n-2j+1)\left(\frac{n-2j+1}{2}+\ell_j\right)\right\}\\
	\times\varphi_\ell(y).
	\end{multline*}
\end{proof}

For $\ell\in\mathfrak a_\C^*(n)$, we define the spherical function of type $\ell$: 
	\begin{align}\label{e:spherical_function}
	\beta_\ell(g):=\int_{SO(n, \R)}\varphi_\ell(\xi g)\; d\xi
	\end{align}
for $g\in SL(n, \R)$. Here $d\xi$ is the normalized Haar measrue $d\xi$ on $SO(n, \R)$. 
Then the spherical function $\beta_\ell$ satisfies the followings: 
	\begin{itemize}
	\item $\beta_\ell(\xi_1 g\xi_2) = \beta_\ell(g)$ for any $\xi_1, \; \xi_2\in SO(n, \R)$
	\item $\beta_\ell$ is an eigenfunction of the Casimir operators $\mathcal C_n^{(j)}$ with an eigenvalue $\lambda_\infty^{(j)}(\ell)$ for $j=1, \ldots, n-1$
	\item $\beta_\ell(1)=1$
	\end{itemize}
	
We identify $\LtwoH$ with the right $SO(n, \R)$-invariant subspace of $\Ltwog$. If 
$k$ is a bi-$SO(n, \R)$-invariant compactly supported smooth function on $SL(n, \R)$,
 it gives rise to a convolution operator $f\to f*k$ on $\LtwoH$
given by
	$$f*k(g) = \int_{SL(n, \R)}f(gh^{-1})k(h)\; dh\;.$$
More generally, one can consider convolution with compactly supported distributions instead of functions. In this case, the convolution operator is well defined only on suitable smooth functions $f$, for example on $C^\infty\left(SL(n, \Z)\bsl \bH^n\right)$. 

Let 
$k$ be a bi-$SO(n, \R)$-invariant compactly supported smooth function on $SL(n, \R)$. 
For any $\ell\in\mathfrak a_\C^*(n)$, the spherical function $\beta_\ell$ is an eigenfunction of the corresponding convolution operator. 
The spherical transform $\widehat k(\ell)\in \C$ is defined to be the corresponding eigenvalue: 
	\begin{align}\label{e:spherical_transform}
	\left(\beta_\ell * k\right)(g) = \widehat k(\ell)\cdot \beta_\ell(g)
	\end{align}
for $g\in SL(n, \R)$. 
The inverse of the spherical transform is given explicitly in terms of the Plancherel measure $\mu_{\rm Planch}$ on $\mathfrak a_\C^*(n)$ by 
	$$k(g) = \int_{i\mathfrak a^*(n)} \widehat k(\alpha)\beta_\alpha(g)\; d\mu_{\rm Planch}(\alpha)\quad \text{(\cite{Helgasson:2000})}.$$

\subsection{Automorphic functions}\label{ss:Maass_Eisenstein}

The group $SL(n, \Z)$ acts on $\bH^n$ discretely. 
Fix $a, b\geq 0$. We define the Siegel set $\Sigma_{a, b}\subset \bH^n$ to be the set of all matrices of the form
	$$\bpm 1 & x_{1, 2} & x_{1, 3} & \ldots & x_{1, n}\\ & 1 & x_{2, 3} & \ldots & x_{2, n}\\ & & \ddots & & \vdots\\ & & & 1 & x_{n-1, n}\\ & & & & 1\ebpm \cdot a_{y_1,\ldots, y_{n-1}}$$
	with $|x_{i, j}|\leq b$ for $1\leq i\;<\; j\leq n$ and $y_i >a$ for $1\leq i \leq n-1$.
Let $\Sigma_a := \Sigma_{a, \infty}$. 

The Sigel set $\Sigma_{\frac{\sqrt 3}{2}, \frac{1}{2}}$ is a good approximation of a fundamental domain: $\bH^n = \bigcup_{\gamma\in SL(n, \Z)} \gamma \Sigma_{\frac{\sqrt 3}{2}, \frac{1}{2}}$.

An automorphic function for $SL(n, \Z)$ is a function $f: \bH^n\to \C$ such that $f(\gamma z) = f(z)$ for any $\gamma\in SL(n, \Z)$ and $z\in \bH^n$. Consider $\LtwoH$ to be the space of automorphic functions $f: \bH^n\to \C$ satisfying
	$$||f||_2^2 :=\int_{SL(n, \Z)\bsl \bH^n} \left|f(z)\right|^2\; d^*z\;<\;\infty$$
where $d^*z = d^*x \; d^*y$ is the left invariant $GL(n, \R)$-measure on $\bH^n$. 
Here
	$$d^*x = \prod_{1\leq i< j\leq n}dx_{i, j},\quad \text{ and }\quad d^*y = \prod_{k=1}^{n-1}y_k^{-k(n-k)-1}dy_k.$$
For $f_1,\; f_2\in \LtwoH$, define the inner product
	$$\left<f_1, f_2\right>:=\int_{SL(n, \Z)\bsl \bH^n} f_1(z)\overline{f_2(z)}\; d^*z\;.$$
	
Let $R$ be a commutative ring with identity $1$. For positive integers $n\geq 2$ and $1\leq n_1, \ldots, n_r\leq n$ with $n_1+\cdots +n_r=n$, define
	$$P_{n_1, \ldots, n_r}(n,R) :=\left\{\left.\bpm A_1 & & * \\ & \ddots & \\ & & A_r\ebpm\in SL(n, R)\; \right|\; {A_i\in SL(n_i, R), \atop 1\leq i\leq r}\right\}$$
	to be the standard parabolic subgroup of $SL(n, R)$ associated to $(n_1, \ldots, n_r)$. The integer $r$ is termed the rank of the parabolic subgroup $P_{n_1, \ldots, n_r}(n, R)$. Define
	$$M_{n_1, \ldots, n_r}(n, R):=\left\{\left.\bpm A_1 & & \\ &\ddots & \\ & & A_r\ebpm\;\right|\; A_i\in SL(n_i, R), \; 1\leq i\leq r\right\}$$
	to be the standard Levi subgroup of $P_{n_1, \ldots, n_r}(n, R)$. Define
	$$N_{n_1, \ldots, n_r}(n, R):=\left\{\bpm I_{n_1} & & * \\ & \ddots & \\ && I_{n_r}\ebpm \in SL(n, R)\right\}$$
	where $I_k$ is the $k\times k$ identity matrix for an integer $k\geq 1$, to be the unipotent radical of $P_{n_1, \ldots, n_r}(n, R)$.

The automorphic function $f$ for $SL(n, \Z)$ is cuspidal if
		$$\int\limits_{(SL(n, \Z)\cap N_{n_1, \ldots, n_r}(n, \Z))\bsl N_{n_1, \ldots, n_r}(n, \R)} f(uz)\; d^*u=0$$
	for any partition $n_1+\cdots+n_r=n$ and $r\geq 1$. 	
Let $\LtwoHcusp$ denote the space of automorphic cuspidal functions in $\LtwoH$. 
	
For each positive integer $N\geq 1$, define
	\begin{align}\label{e:Hecke_matrix}
	G_N:=\left\{\left.\bpm c_1 & c_{1, 2} & \ldots & c_{1, n}\\& c_2 & \ldots & c_{2, n}\\ & & \ddots & \vdots \\ & & & c_n\ebpm \;\right|\;{c_1\cdots c_n=N,\; c_1, \ldots, c_n\;>\;0\atop 0\leq c_{i, j} < c_i \hskip 5pt (1\leq i< j\leq n)}\right\}\;.
	\end{align}
Let $f: \bH^n\to \C$ be a function. For each integer $N\geq 1$, we define a Hecke operator 
	\begin{align}\label{e:Hecke}
	T_Nf(z) :=\frac{1}{N^{\frac{n-1}{2}}}\sum_{\gamma\in G_N} f\left(\gamma z\right).
	\end{align}
Clearly $T_1$ is the identity operator. If $f\in \LtwoH$ then $T_Nf\in \LtwoH$. For $n=2$, the Hecke operators are self-adjoint with respect to the inner product. For $n\geq 3$, the Hecke operator is no longer self-adjoint, but the adjoint operator is again a Hecke operator and the Hecke operator commutes with its adjoint, so it is a normal operator. 

If a smooth function $f\in L_{\rm cusp}(SL(n, \Z)\bsl \bH^n)$ is a simultaneous eigenfunction of Casimir operators $\mathcal C_n^{(j)}$ for $j=1, \ldots, n-1$ and Hecke operators $T_N$ for any $N\geq 1$, then $f$ is called a Hecke-Maass form.

If $f$ is a Hecke-Maass form, then there exists $\ell_\infty(f)\in \mathfrak a_\C^*(n)$ such that
	$$\mathcal C_n^{(j)} f = \lambda_\infty^{(j)}(\ell_\infty(f))\cdot f.$$

For $\ell\in \mathfrak a_\C^*(n)$ and $\epsilon=\pm 1$, define
	\begin{align}\label{e:Whittaker}
	W_J(z; \ell, \epsilon):=\int_{N(n, \R)} \varphi_\ell\left(\bpm & & & (-1)^{\lfloor\frac{n}{2}\rfloor} \\ & & 1 & \\ &  \iddots  & & \\ 1 & &  &\ebpm \cdot \bpm 1 & u_{1, 2} & \ldots & u_{1, n} \\ & 1 & \ldots & u_{2, n}\\ & & \ddots & \vdots\\ & & & 1\ebpm \; z\right)\\\notag
	\times e^{-2\pi i(\epsilon \cdot u_{1, 2}-u_{2, 3}-\cdots -u_{n-1, n})}\; d^*u. 
	\end{align}
to be Jacquet's Whittaker function of type $\ell$. Then 
	$$\mathcal C_n^{(j)}W_J(z;\ell, \epsilon)=\lambda_\infty^{(j)}(\ell)\cdot W_J(z; \ell, \epsilon)$$ 
	for $j=1, \ldots, n-1$. For any $u= \sm 1 & u_{1, 2} & \ldots & u_{1, n}\\ & 1& \ldots & u_{2, n}\\ & & \ddots & \vdots\\ & & &1\esm\in N(n, \R)$, we have
	$$W_J(u z; \ell, \epsilon ) = e^{2\pi i(-\epsilon \cdot u_{1, 2}+u_{2, 3}+\cdots +u_{n-1, n})}\cdot W_J(z;\ell, \epsilon)$$
	for any $z\in \bH^n$. Moreover, 
	$$\int\limits_{\Sigma_{\frac{\sqrt 3}{2}, \frac{1}{2}}}  \left|W(z; \ell, \epsilon)\right|^2 d^*z< \infty.$$

By (9.1.2) \cite{Goldfeld:2006}, every Hecke-Maass form $f$ has a Fourier-Whittaker expansion of the form
	\begin{align}\label{e:Maass_Fourier}
	f(z) = &\sum_{\gamma\in N_{n-1}\bsl SL(n-1, \Z)}\sum_{m_1=1}^\infty \cdots\sum_{m_{n-2}=1}^\infty\sum_{m_{n-1}\neq 0} \frac{A_f(m_1, \ldots, m_{n-1})}{\prod_{k=1}^{n-1}|m_k|^{\frac{k(n-k)}{2}}}\\\notag
	&\hskip 5pt \times W_J\left(\bpm m_1\cdots|m_{n-1}| & & & \\ & \ddots & &\\ & & m_1 & \\ & & & 1\ebpm \cdot\bpm \gamma & \\ & 1\ebpm\; z; \ell_{\infty}(f), \frac{m_{n-1}}{|m_{n-1}|}\right)
	\end{align}
where $A_f(m_1, \ldots, m_{n-1})\in \C$. We assume that $A_f(1, \ldots, 1)=1$. Then $T_N f = A_f(N, 1, \ldots, 1)\cdot f$ for any integer $N\geq 1$ and $A_f(m_1, \ldots, m_{n-1})$ satisfies the following (multiplicative) relation \cite{Goldfeld:2006}:
 for $(m_1, \ldots, m_{n-1})\in \Z^{n-1}$, and an integer $m\geq 1$, we have
	\begin{align}\label{e:Hecke_relation}
	&A_f(m, 1, \ldots, 1) A_f(m_1, \ldots, m_{n-1})\\\notag
	&\hskip 20pt=\sum_{\prod_{j=1}^n c_j=m, \atop c_1|m_1, \ldots, c_{n-1}|m_{n-1}} A_f\left(\frac{m_1c_n}{c_1}, \ldots, \frac{m_j c_{j-1}}{c_j}, \ldots, \frac{m_{n-1}c_{n-2}}{c_{n-1}}\right)
	\end{align}
	and here $c_1, \ldots, c_n>0$. 
Since Hecke operators are normal, we have
	$$A_f(m_{n-1}, \ldots, m_1)=\overline{A_f(m_1, \ldots, m_{n-1})}.$$

For any non-negative integers $k_1, \ldots, k_{n-1}$, let
	\begin{align}\label{e:Schur}
	S_{k_1, \ldots, k_{n-1}}(x_1, \ldots, x_n) := \frac{\left| \bpm x^{k_1+\cdots +k_{n-1}+n-1} & \ldots & x_n^{k_1+\cdots +k_{n-1}+n-1}\\ x_1^{k_1+\cdots +k_{n-2}+n-2} & \ldots & x_n^{k_1+\cdots +k_{n-2}+n-2}\\ \vdots & \vdots & \vdots \\ 
x_1^{k_1+1} & \ldots & x_n^{k_1+1}\\ 1 & \ldots & 1\ebpm \right|}
{\left|\bpm x_1^{n-1} & \ldots & x_n^{n-1}\\ x_1^{n-2} & \ldots & x_n^{n-2}\\ \vdots & \vdots & \vdots \\ x_1 & \ldots & x_n\\ 1 & \ldots & 1\ebpm\right|}
	\end{align}
be a Schur polynomial.

Since $f$ is an eigenfunction of Hecke operators, there exist $\ell_p(f) = (\ell_{p, 1}(f), \ldots, \ell_{p, n}(f))\in \mathfrak a_\C^*(n)$ for any finite prime $p$, such that 
	\begin{align}\label{e:Schur-Hecke}
	A_{f}(p^{k_1}, \ldots, p^{k_{n-1}}) = S_{k_1, \ldots, k_{n-1}}(p^{-\ell_{p, 1}(f)}, \ldots, p^{-\ell_{p, n}(f)}).
	\end{align}

\begin{dfn}\label{d:extend_Hecke}
Let $n\geq 2$ be an integer and fix a prime $p$. For $j=1, \ldots, n-1$, define
	$$T_p^{(j)} :=\sum_{k=0}^{j-1} (-1)^k T_{p^{k+1}}T_p^{(j-k-1)}$$
where $T_{p^r}^{(1)} = T_{p^r}$ for any integer $r\geq 0$ and $T_p^{(0)}$ is an identity operator. 
\end{dfn}

By the multiplicative relations (\ref{e:Hecke_relation}), we have
	\begin{align*}
	T_p^{(j)}f= A_f(\underbrace{1, \ldots, 1, p}_{j}, 1, \ldots, 1)\cdot f, \quad (\text{ for }j=1, \ldots, n-1)
	\end{align*}
for any prime $p$. Then
	$$\lambda_p^{(j)}(\ell_p(f)) = A_f(\underbrace{1, \ldots, 1, p}, 1, \ldots, 1)$$
for $j=1, \ldots, n-1$.
	
Let $n\geq 2$ be an integer. If a Maass form $f$ satisfies 
	$$f(z)= \widetilde f(z) := f\left(w\cdot\;^t(z^{-1})\;\cdot w\right), \quad w=\bpm  & & &(-1)^{\lfloor\frac{n}{2}\rfloor} \\  & & 1 & \\  & \iddots & & \\  1& & &\ebpm$$
then $f$ is called a self-dual Maass form.

\subsection{Quasi-Maass forms and the annihilating operator}\label{ss:Quasi-Maass_forms}

Let $M$ be a set of places over $\Q$ including $\infty$ and $\Pi_M$ be a set of local representations as given in the introduction. 
We construct a quasi-Maass form $F_{\Pi_M}$ of $\Pi_M$ on $\bH^n$, 
which lies in the restricted tensor product of local representations $\pi_v\in \Pi_M$ in Definition \ref{d:Quasi-Maass}. 
Then for $j=1, \ldots, n-1$, we have $\mathcal C_n^{(j)}F_{\Pi_M} = \lambda_\infty^{(j)}(\ell_{\pi_\infty})\cdot F_{\Pi_M}$ and $T_q^{(j)}F_{\Pi_M} = \lambda_q^{(j)}(\ell_{\pi_q})\cdot F_{\Pi_M}$. 
	
To define the automorphic lifting of quasi-Maass forms, we fix a fundamental domain for $SL(n, \Z)\bsl \bH^n$. We define $\fund^n$ to be the susbset of the Siegel set $\Sigma_{\frac{\sqrt 3}{2}, \; \frac{1}{2}}$, which contains $\Sigma_{1, \frac{1}{2}}$, satisfying:
	\begin{itemize}
	\item for any $z\in \bH^n$, there exists $\gamma\in SL(n, \Z)$ such that $\gamma z\in \fund^n$ ;
	\item for any $z\in \fund^n$, $\gamma z\notin\fund^n$ for any $I_n\neq \gamma\in SL(n, \Z)$ where $I_n$ is the $n\times n$ identity matrix. 
	\end{itemize}
Then $\fund^n$ becomes a fundamental domain for $SL(n, \Z)$.

In Definition \ref{d:auto-lifting}, we defined the automorphic lifting $\widetilde F_{\Pi_M}$ of a quasi-Maass form $F_{\Pi_M}$ with respect to the fixed fundamental domain $\fund^n$ and $\widetilde F_{\Pi_M}\in \LtwoH$. 

We get a smooth automorphic lifting of a quasi-Maass form $F_{\Pi_M}$ defined via $\widetilde F_{\Pi_M} * \kappa$ for any nonzero smooth compactly supported bi-$SO(n, \R)$-invariant function $\kappa$ on $SL(n, \R)$. 

For $\delta>0$,  if a smooth compactly supported bi-$SO(n, \R)$-invariant function $H_\delta$
 satisfies the following conditions:
	\begin{itemize}
	\item ${\rm supp}(H)\subset B_\delta$ ;
	\item $H_\delta(g) = H_\delta(g^{-1})$ for any $g\in SL(n, \R)$ ;
	\item $H_\delta(g)\; \geq \; 0$ ;
	\item $\int_{SL(n, \R)} H_\delta (g)\; dg = 1$
	\end{itemize}
then it is called the standard bump function.
By Lemma \ref{l:Relations_Iwasawa_Polar}, for $z\in \Sigma_{e^{4\delta}}$, we have
	$$\widetilde F_{\Pi_M}* H_\delta(z) = F_{\Pi_M}* H_\delta(z) = \widehat H_\delta(\ell_{\pi_\infty})\cdot F_{\Pi_M}(z).$$ 
 By the following lemma, it is always possible to choose  
$\delta>0$ such that $\widetilde F_{\Pi_M}* H_\delta$ is non-trivial.

\begin{lem}\label{l:lowerbd_bump_funct}
For any $\ell=(\ell_1, \ldots, \ell_n)\in \mathfrak a_\C^*(n)$ satisfying $\left|\rRe(\ell_j)\right|< \frac{1}{2}$, let
	\begin{align}\label{e:lowerbd_bump_funct}
{\rm LB}_{\delta} (\ell):=
	 1-	 \frac{4\left(e^{\frac{n(n+6)}{4}\cdot \delta}-1\right)}{ n(n+6)}\sum\limits_{j=1}^{n}\left|\ell_j+\frac{n-2j+1}{2}\right|
	\end{align}
for $\delta>0$, 
where $\lambda_n(\ell)$ is the 
eigenvalue of the Laplacian $\Delta_n$ for $\varphi_\ell$ as in Lemma \ref{l:Laplacian_eigenvalue}. 

Choose $\delta>0$ such that $LB_\delta(\ell)>0$. Then
	$$\left|\widehat H_\delta(\ell)\right| \; >\; {\rm LB}_{\delta}(\ell).$$

\end{lem}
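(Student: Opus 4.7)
The plan is to express $\widehat{H_\delta}(\ell)$ as an ordinary $SL(n,\R)$-integral of $\varphi_\ell$ against $H_\delta$, and then bound $|\varphi_\ell(g) - 1|$ pointwise on ${\rm supp}(H_\delta) \subset B_\delta$. Using the bi-$SO(n,\R)$-invariance and the symmetry $H_\delta(g) = H_\delta(g^{-1})$, together with $\beta_\ell(e) = 1$ (since every $\xi \in SO(n,\R)$ has trivial Iwasawa $A$-part, hence $\varphi_\ell(\xi) = 1$) and a change of variable absorbing the $SO(n,\R)$-average in the integral representation $\beta_\ell(g) = \int_{SO(n,\R)} \varphi_\ell(\xi g)\,d\xi$, one obtains
\[
\widehat{H_\delta}(\ell) \;=\; \int_{SL(n,\R)} H_\delta(g)\,\varphi_\ell(g)\,dg.
\]
Since $\int H_\delta\,dg = 1$ and $\varphi_\ell(e) = 1$, this yields $|\widehat{H_\delta}(\ell) - 1| \leq \sup_{g \in B_\delta}|\varphi_\ell(g) - 1|$, reducing the lemma to a uniform pointwise estimate.

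For the pointwise bound, write $\varphi_\ell(g) = \exp(z)$ where
\[
z \;=\; \sum_{j=1}^{n-1} s_j \log y_j \;=\; \sum_{i=1}^n \mu_i\, a_i,
\]
with $s_j = \sum_{k=1}^{n-j} \mu_k$, $\mu_k = \ell_k + \frac{n-2k+1}{2}$, and $(a_1,\ldots,a_n)$ the diagonal entries of $\exp({\rm Iw}_Y(g))$. The two forms of $z$ are related by the telescoping identity $\log y_{n-j} = a_j - a_{j+1}$. For $g \in B_\delta$, Lemma \ref{l:Relations_Iwasawa_Polar} bounds $|\log y_1| \leq 2\delta$ and $|\log y_j| \leq 4\delta$ for $j \geq 2$, which (together with a direct rearrangement or a Kostant-type convexity bound on the Iwasawa $\mathfrak{a}$-norm) gives $|z|$ controlled by $\delta \sum_j|\mu_j|$. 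The hypothesis $|\rRe(\ell_j)| < 1/2$ then forces $\|\rRe(\mu)\|$ to satisfy a quantitative bound, and after the same rearrangement yields $|\rRe(z)| \leq \frac{n(n+6)}{4}\,\delta$.

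Next, the elementary identity $e^z - 1 = z\int_0^1 e^{tz}\,dt$ gives
\[
|e^z - 1| \;\leq\; |z|\,\int_0^1 e^{t|\rRe(z)|}\,dt \;=\; |z|\cdot\frac{e^{|\rRe(z)|}-1}{|\rRe(z)|}\,.
\]
Since $x\mapsto (e^x-1)/x$ is monotone increasing on $(0,\infty)$ and $|\rRe(z)| \leq \frac{n(n+6)}{4}\,\delta$, combining with the bounds on $|z|$ from the previous step yields
\[
|\varphi_\ell(g) - 1| \;\leq\; \frac{4(e^{n(n+6)\delta/4}-1)}{n(n+6)}\sum_{j=1}^n|\mu_j| \;=\; 1 - {\rm LB}_\delta(\ell).
\]
The reverse triangle inequality $|\widehat{H_\delta}(\ell)| \geq 1 - |\widehat{H_\delta}(\ell) - 1|$ then produces $|\widehat{H_\delta}(\ell)| \geq {\rm LB}_\delta(\ell)$, with strict inequality because $|\varphi_\ell(g) - 1|$ vanishes at $g = e$, which lies in the interior of ${\rm supp}(H_\delta)$ where $H_\delta > 0$.

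The main technical hurdle is pinning down the exact constant $n(n+6)/4$ in the exponent. A coordinatewise application of Lemma \ref{l:Relations_Iwasawa_Polar} alone gives an overly large bound like $4(n-1)$ on $|z|/(\delta\sum_j|\mu_j|)$ that is too weak for intermediate $n$; the sharper constant relies on simultaneously exploiting the relation $\sum_i \mu_i = 0$, a convexity bound on the Iwasawa $\mathfrak{a}(n)$-projection, and the hypothesis $|\rRe(\ell_j)| < 1/2$. The crucial trick that makes the lemma work for arbitrarily large $|\rIm(\ell_j)|$ (as long as ${\rm LB}_\delta(\ell) > 0$) is keeping the full modulus $|z|$ in the linear prefactor while only $|\rRe(z)|$ enters the exponential, so that large imaginary parts inflate $\sum_j|\mu_j|$ but not the exponent bound.
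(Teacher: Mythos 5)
Your proposal takes essentially the same route as the paper: reduce to $\sup_{g\in B_\delta}|\varphi_\ell(g)-1|$, write $\varphi_\ell(g)=e^{z}$ with $z=\sum_i\mu_i a_i$ in terms of the log-diagonal entries $a_i$ of the Iwasawa $A$-component, bound $|a_i|\leq\sigma(g)<\delta$, and combine $e^z-1=z\int_0^1 e^{tz}\,dt$ with the monotonicity of $(e^t-1)/t$ so that only $|\mathrm{Re}(z)|$ feeds the exponential. The one spot where you are more tentative than necessary is the bound $|a_i|\leq\sigma(g)$: this is precisely the diagonal-entry comparison already carried out inside the proof of Lemma \ref{l:Relations_Iwasawa_Polar} (comparing $z\,{}^tz$ with its eigenvalue matrix, applied to both $g$ and $g^{-1}$), so no extra Kostant-convexity input beyond that lemma's own argument is needed to pin down the constant $n(n+6)/4$.
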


\begin{proof}[Proof of Lemma \ref{l:lowerbd_bump_funct}]
For $\ell\in\mathfrak a_\C^*(n)$, we have
	\begin{align*}
	\left|\widehat H_\delta(\ell) -1\right|
	&\leq \int_{SL(n, \R)} H_\delta(g)\; \left|\varphi_\ell(g)-1\right|\; dg \; \leq \;\underset{g\in B_\delta}{\rm sup}\left|\varphi_\ell(g)-1\right|.
	\end{align*}
So, 
	$$\left|\widehat H_\delta(\ell)\right| \; \geq \; 1-\underset{g\in B_\delta}{\rm sup} \left|\varphi_\ell(g)-1\right|\;.$$
For $g\in SL(n,\R)$ we have $\exp({\rm Iw}_Y(g)) = (y_1', \ldots, y_n')\in \mathfrak a(n)$ 
and $\varphi_\ell(g) = e^{\left(\sum_{j=1}^n \left(\ell_j+\frac{n-2j+1}{2}\right)\cdot \ln y_j'\right)}$.
For any $a, b\in \R$, we have $e^{a+ib}-1 = (a+ib)\int_1^e x^{a+ib-1}\; dx$. So
	\begin{align*}
	\left|\varphi_\ell(g)-1\right|&\leq \left|\sum_{j=1}^n \left(\ell_j+\frac{n-2j+1}{2}\right)\cdot\ln y_j'\right|\cdot\frac{e^{\sum_{j=1}^n \left({\rm Re}(\ell_j)+\frac{n-2j+1}{2}\right)\cdot \ln y_j'}-1}{\sum_{j=1}^n \left({\rm Re}(\ell_j)+\frac{n-2j+1}{2}\right)\cdot\ln y_j'}
	\end{align*}
if $ \sum_{j=1}^{n}\left(\rRe(\ell_j)+\frac{n-2j+1}{2}\right)\cdot\ln y_j'\neq 0$. Otherwise,
	\begin{align*}
	\left|\varphi_\ell(g)-1\right|&\leq \left|\sum_{j=1}^n \left(\ell_j+\frac{n-2j+1}{2}\right)\cdot\ln y_j'\right|.	
	\end{align*}
By Lemma \ref{l:Relations_Iwasawa_Polar}, for $g\in B_\delta$, we have
	$$-\delta\;\leq \; \ln y_j'\;\leq \; \delta, \hskip 20pt \text{(for any $j=1, \ldots, n$)}\;.$$
So, 
	\begin{align*}
	\left|\sum_{j=1}^n \left(\ell_j+\frac{n-2j+1}{2}\right)\cdot \ln y_j'\right| \;\leq \; \delta\cdot \sum_{j=1}^n \left|\ell_j + \frac{n-2j+1}{2}\right|
	\end{align*}
by Cauchy-Schwartz inequality. 
Since $\left|\rRe(\ell_j)\right|<\frac{1}{2}$, we have 
	\begin{multline*}
	\left|\sum_{j=1}^n\left(\rRe(\ell_j)+\frac{n-2j+1}{2}\right)\ln y_j'\right| 
	\leq \delta\cdot \sum_{j=1}^n \left|\rRe(\ell_j)+\frac{n-2j+1}{2}\right| \\
	\leq \delta\cdot \frac{n}{2}\left(\frac{n}{2}+3\right).
	\end{multline*}	
Since $\underset{t\to0}{\lim}\frac{e^t-1}{t}=1$ and $\frac{e^t-1}{t}$ is increasing, we have
	\begin{align*}
	\left|\varphi_\ell(g)-1\right| 
	&< \frac{4\left(e^{\frac{n(n+6)}{4}\cdot \delta}-1\right)}{n(n+6)}\left(\sum_{j=1}^{n}\left|\ell_j+\frac{n-2j+1}{2}\right|^2\right)^\frac{1}{2}.
	\end{align*}
\end{proof}

Take 
	$$0<\delta\leq \ln\left(\frac{n(n+6)}{8}\left(\sum_{j=1}^n \left|\ell_{\pi_\infty, j}+\frac{n-2j+1}{2}\right|\right)^{-1}+1\right).$$
Since $\pi_\infty$ is irreducible admissible unramified unitary generic representation of $\R^\times\bsl GL(n, \R)$, we have $\left|\rRe(\ell_{\pi_\infty, j})\right|< \frac{1}{2}$ for $j=1, \ldots, n$ \cite{GH:2011}. So $\left|\widehat H_\delta(\ell_{\pi_\infty})\right| >\frac{1}{2}$ and $\widetilde F_{\Pi_M}* H_\delta$ is a non-trivial, smooth automorphic function on $SL(n, \Z)\bsl \bH^n$.

In (\ref{e: annihilating_eigenvalue}), we defined $\widehat\natural_p^n(\ell_1, \ell_2)$ for $\ell_1, \ell_2\in \mathfrak a_\C^*(n)$. In the following lemma, we apply a Paley-Wiener type theorem to $\widehat\natural_p^n$ and construct the annihilating operator $\natural_p^n$ explicitly as a polynomial in convolution operators and in Hecke operators. 

\begin{lem}
\label{l:construction_annihilating}
	Let $n\geq 2$ be an integer and fix a prime $p$. There exists an operator denoted $\natural_p^n$, which is a polynomial in convolution operators (associated to some compactly supported bi-$SO(n, \R)$-distributions) and in Hecke operators at $p$, satisfying
	$$\natural_p^n f(z) = \widehat\natural_p^n\left(\ell_\infty(f), \ell_p(f)\right)\cdot f(z), \hskip 20pt (z\in \bH^n)\;.$$
Here $f$ is a smooth function on $\bH^n$ which is an eigenfunction of $\mathcal Z^n$ of type $\ell_\infty(f)$ and also an eigenfunction of Hecke operators at $p$, with eigenvalues as in (\ref{e:Schur-Hecke}) for $\ell_p(f)$. 
\end{lem}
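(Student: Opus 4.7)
The plan is to realize $\widehat\natural_p^n(\ell_1,\ell_2)$ as a universal polynomial in the elementary symmetric quantities $\lambda_p^{(j)}(\ell_1)$ and $\lambda_p^{(j)}(\ell_2)$ for $j=1,\ldots,n-1$, and then to substitute for each of these scalars an operator acting on $f$ with that scalar as its eigenvalue. The substitutions will come from two different sources: Paley-Wiener on the archimedean side will produce bi-$SO(n,\R)$-invariant distributions whose convolutions realize the $\lambda_p^{(j)}(\ell_1)$'s, while the Hecke operators of Definition \ref{d:extend_Hecke} already realize the $\lambda_p^{(j)}(\ell_2)$'s.

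For Step 1, introduce the indeterminates $x_i=p^{-\ell_{1,i}}$ and $y_i=p^{-\ell_{2,i}}$, so that $\prod_i x_i=\prod_i y_i=1$ by the trace-zero condition. Then the defining product (\ref{e: annihilating_eigenvalue}) becomes
\[
\widehat\natural_p^n(\ell_1,\ell_2)=\prod_{k=1}^{\lfloor n/2\rfloor}\prod_{|I|=k}\prod_{|J|=k}\Bigl(1-\prod_{i\in I}x_i\cdot\prod_{j\in J}y_j\Bigr),
\]
which is a polynomial in $x_1,\ldots,x_n,y_1,\ldots,y_n$ that is invariant under independent permutations of the two sets of variables. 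The fundamental theorem of symmetric polynomials, combined with the identities $e_n(x)=e_n(y)=1$, furnishes a polynomial $P\in\Z[X_1,\ldots,X_{n-1},Y_1,\ldots,Y_{n-1}]$ with
\[
\widehat\natural_p^n(\ell_1,\ell_2)=P\bigl(\lambda_p^{(1)}(\ell_1),\ldots,\lambda_p^{(n-1)}(\ell_1),\lambda_p^{(1)}(\ell_2),\ldots,\lambda_p^{(n-1)}(\ell_2)\bigr).
\]

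For Step 2, each map $\ell\mapsto\lambda_p^{(j)}(\ell)$ is a finite sum of exponentials $e^{-(\log p)\sigma_I(\ell)}$, hence is a Weyl-invariant entire function of exponential type on $\mathfrak a^*_\C(n)$. By Helgason's Paley-Wiener theorem \cite{Helgasson:2000} it is the spherical transform $\widehat k_{p,j}$ of a compactly supported bi-$SO(n,\R)$-invariant distribution $k_{p,j}$ on $SL(n,\R)$ (supported, in fact, on finitely many double cosets $SO(n,\R)\cdot a\cdot SO(n,\R)$ with $a\in A^0(n,\R)$ of polar height bounded in terms of $\log p$). By the defining relation (\ref{e:spherical_transform}) of the spherical transform, convolution with $k_{p,j}$ acts on any smooth $\mathcal Z^n$-eigenfunction $h$ of type $\ell_\infty(h)$ by the scalar $\lambda_p^{(j)}(\ell_\infty(h))$. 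On the other hand, Definition \ref{d:extend_Hecke} and the multiplicativity (\ref{e:Hecke_relation}) give $T_p^{(j)}h=\lambda_p^{(j)}(\ell_p(h))\,h$ for any Hecke eigenform $h$. Define
\[
\natural_p^n := P\bigl(\,\cdot*k_{p,1},\ldots,\cdot*k_{p,n-1},\,T_p^{(1)},\ldots,T_p^{(n-1)}\bigr),
\]
a polynomial in the $n-1$ convolutions and $n-1$ Hecke operators. The convolutions commute with one another by commutativity of the spherical Hecke algebra of $SL(n,\R)$, the Hecke operators at $p$ commute with one another, and a right-convolution commutes with a left-translation operator, so all $2(n-1)$ operators pairwise commute and the polynomial $\natural_p^n$ is well-defined on smooth functions. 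Applying it to $f$ and substituting the corresponding scalar for each operator yields the required identity $\natural_p^n f=\widehat\natural_p^n(\ell_\infty(f),\ell_p(f))\,f$.

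The main obstacle is the symmetric-polynomial reduction in Step 1 together with the Paley-Wiener inversion in Step 2: one must be careful that the polynomial $P$ can be chosen with integer (or at any rate, universal) coefficients independent of $\ell_1,\ell_2$, and that the associated distributions $k_{p,j}$ are genuinely compactly supported so that convolution makes sense against smooth, non-compactly-supported functions on $SL(n,\Z)\bsl\bH^n$. Both issues are standard: the first reduces to the structure of the ring of bi-invariants under $W_n\times W_n$, and the second is the content of Helgason's theorem, which guarantees that a Weyl-invariant exponential polynomial of bandwidth $O(\log p)$ corresponds to a distribution supported in a ball of polar height $O(\log p)$.
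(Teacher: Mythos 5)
Your proof is correct and takes essentially the same approach as the paper: both observe that $\widehat\natural_p^n$ is separately Weyl-invariant in $\ell_1$ and $\ell_2$, split it into a polynomial in quantities depending only on $\ell_1$ times quantities depending only on $\ell_2$, realize the former as spherical transforms of compactly supported bi-$SO(n,\R)$-invariant distributions via Paley--Wiener, and the latter as eigenvalues of Hecke operators at $p$. The only difference is organizational: you invoke the fundamental theorem of symmetric polynomials directly (with the relation $e_n=1$) to get a universal polynomial $P$ in the elementary symmetric functions $\lambda_p^{(j)}(\ell_1)$, $\lambda_p^{(j)}(\ell_2)$, whereas the paper expands the $k$-indexed factors one at a time, introducing intermediate symmetric polynomials $B_{r,k}$, $a_{j,k}$, $b_{j,k}$ and writing $\natural_p^n$ as a product over $k$ of sums $\sum_j S_j^{(k)}\mathcal L_{\kappa_j^{(k)}}$; your version is a bit more streamlined but produces the same class of operator, and your remark on the pairwise commutativity (spherical Hecke algebra of the Gelfand pair, commutativity of the Hecke algebra at $p$, and commutation of right-convolution with left-translation) is a welcome explicit check that the paper leaves implicit.
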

\begin{rmk}
Before proving Lemma \ref{l:construction_annihilating}, we give an example of $\natural_p^n$ for the cases $n=2$ and $n=3$. 
\begin{enumerate}[{\bf (i)}]
\item For $n=2$, we have 
	\begin{align*}
		\natural_p^2 = T_{p^2}+T_p^2-2T_p\mathcal L_\kappa +1
	\end{align*}
where $\mathcal L_\kappa$ is the convolution operator associated to the distribution $\kappa$ such that $\widehat{\kappa}(\ell)=p^{\ell_1}+p^{\ell_2}$ for any $\ell=(\ell_1, \ell_2)\in\mathfrak a_\C^*(2)$. This operator satisfies $\natural_p^2 = \aleph\circ\aleph$ for the operator $\aleph$ constructed in \S2, \cite{Lindenstrauss-Venkatesh}.\\

\item Let $n=3$. For $j=1, 2, 3$, define the compactly supported bi-$SO(3, \R)$-distributions $\kappa_{\pm j}$ such that
	\begin{align*}
	&\widehat{\kappa_1}(\ell)=p^{\ell_1}+p^{\ell_2}+p^{\ell_3}, \hskip 20pt
	\widehat{\kappa_{-1}}(\ell) = p^{-\ell_1}+p^{-\ell_2}+p^{-\ell_3},\\\notag
	&\widehat{\kappa_2}(\ell) = -\widehat{\kappa_{-1}}(\ell)^2+3\widehat{\kappa_1}(\ell), \hskip 20pt \widehat{\kappa_{-2}}(\ell) = \widehat{\kappa_1}(\ell)^2-3\widehat{\kappa_{-1}}(\ell), \\\notag
	&\widehat{\kappa_3(\ell)} = -\widehat{\kappa_2}(\ell)\cdot\widehat{\kappa_1}(\ell), \hskip 10pt\text{ and }\hskip 10pt \widehat{\kappa_{-3}}(\ell) = -\widehat{\kappa_{-2}}(\ell)\cdot\widehat{\kappa_{-1}}(\ell),
	\end{align*}
	for any $\ell=(\ell_1, \ell_2, \ell_3)\in \mathfrak a_\C^*(3)$. Then
	\begin{align*}	
	\natural_p^3 &= T_p\mathcal L_{\kappa_3} + T_p^2\mathcal L_{\kappa_2}-T_p^3-T_p(T_p^{(2)})^2\mathcal L_{\kappa_1} \\\notag
	&\hskip 50pt+T_p^2T_p^{(2)}\mathcal L_{\kappa_{-1}}+(T_p^{(2)})^2\mathcal L_{\kappa_{-2}}+(T_p^{(2)})^3 + T_p^{(2)}\mathcal L_{\kappa_{-3}}.
	\end{align*}
\end{enumerate}
\end{rmk}
\begin{proof}[Proof of Lemma \ref{l:construction_annihilating}]
Let $W_n$ be the Weyl group of $SL(n, \R)$. For any $w_1, \; w_2\in W_n$, we have
	$$\widehat\natural_p^n(w_1.\ell_1, w_2.\ell_2) = \widehat\natural_p^n(\ell_1, \ell_2),$$
and $\widehat\natural_p^n(\ell_1, \ell_2)$ is holomorphic in $\ell_1, \ell_2\in \mathfrak a_\C^*(n)$. 

For $1\leq k \leq \lfloor\frac{n}{2}\rfloor$ and  $1\leq r\leq d_k(n)=\frac{n!}{k!(n-k)!}$, consider  homogeneous degree $k\cdot r$ symmetric polynomials $B_{r, k}$ in $n$ variables, defined by
	\begin{align*}
	& \prod_{1\leq j_1 <\cdots < j_k\leq n} \left(1-xp^{-(\alpha_{j_1}+\cdots +\alpha_{j_k})}\right)\\
	&=1-B_{1, k}(\alpha)x +\cdots + (-1)^r B_{r, k}(\alpha)x^r  +\cdots +(-1)^{d_k(n)}x^{d_k(n)}
	\end{align*}
for any $\alpha=(\alpha_1, \ldots, \alpha_n)\in\mathfrak a_\C^*(n)$. By \cite{Goldfeld:2006}, Hecke eigenvalues can be described in Schur polynomials in $n$ variables, and they are a linear basis for the space of homogeneous symmetric polynomials in $n$ variables. So $B_{r, k}(\alpha)$ is an eigenvalue of a linear combination of  Hecke operators at $p$.

By using an analogous of the Paley-Wiener theorem \cite{Helgasson:2000} for distributions, we show that there exist compactly supported bi-$SO(n, \R)$-invariant distributions whose spherical transform is $B_{r, k}(\alpha)$. 

For $1\leq k \leq \lfloor\frac{n}{2}\rfloor$, define homogeneous symmetric polynomials $a_{j, k}$ and $b_{j, k}$ by
	\begin{multline*}
	\prod_{1\leq j_1< \cdots < j_k\leq n}\prod_{1\leq i_1 <\cdots < i_k\leq n} \left(1-p^{-(\ell_{1, i_1}+\cdots +\ell_{1, i_k})-(\ell_{2, j_1}+\cdots +\ell_{2, j_k})}\right)\\
	=\prod_{1\leq i_1< \cdots < i_k \leq n} \left(\sum_{r=0}^{d_k(n)}B_{r, k}(\ell_1)p^{-r(\ell_{2, i_1}+\cdots +\ell_{2, i_k})}\right)
	=\sum_{j=0}^{\widetilde d_k(n)} a_{j, k}(\ell_1) \cdot b_{j, k}(\ell_2)
	\end{multline*}
for $\ell_1 = (\ell_{1, 1}, \ldots, \ell_{1, n}), \; \ell_2 = (\ell_{2, 1}, \ldots, \ell_{2, n})\in\mathfrak a_\C^*(n)$ and some positive integer $\widetilde d_k(n)$. Then $a_{j, k}(\ell_1)$ is a polynomial in $B_{r, k}(\ell_1)$. By symmetry, $b_{j, k}(\ell_2)$ is also a polynomial in $B_{r, k}(\ell_2)$. So, there exist compactly supported bi-$SO(n, \R)$-invariant distributions $\kappa_j^{(k)}$ whose spherical transform is $a_{j, k}(\ell_1)$. For each $1\leq k \leq \lfloor \frac{n}{2}\rfloor$ and $1\leq j\leq \widetilde d_k(n)$, let $\mathcal L_{\kappa_j^{(k)}}$ be the convolution operator associated to the distribution $\kappa_j^{(k)}$. 

Moreover, there exist Hecke operators $S_j^{(k)}$ such that 
	$$S_j^{(k)} f = b_{j, k}(\ell_p(f))\cdot f$$
where $f$ is an eigenfunction of Hecke operators with parameter $\ell_p(f)\in\mathfrak a_\C^*(n)$. 

Therefore  	
	\begin{align}\label{e:annihilate_polynomial}
	\natural_p^n = \prod_{k=1}^{\lfloor\frac{n}{2}\rfloor} \left(\sum_{j=0}^{\widetilde d_k(n)} S_j^{(k)} \mathcal L_{\kappa_j^{(k)}}\right)
	\end{align}
	and 
	$$\natural_p^n f = \widehat\natural_p^n (\ell_\infty(f), \ell_p(f)) \cdot f$$
where $f$ is an eigenfunction of Casimir operators and the Hecke operators. 	
\end{proof}

Since we use distributions to define the annihilating operator $\natural_p^n$, 
the operator is well defined in the space of smooth functions. 
For $\delta>0$, let $H_\delta$ be a standard bump function and we define the operator $\natural_p^n H_\delta$ to be
	$$\natural_p^n H_\delta f = \natural_p^n(f*H_\delta)$$
for a function $f: \bH^n\to \C$ which makes the integral convergent. 
Then
	$$\widehat{\natural_p^n H_\delta}(\ell_1, \ell_2) = \widehat\natural_p^n(\ell_1, \ell_2) \cdot\widehat H_\delta(\ell_1), \quad\text{ (for any $\ell_1, \ell_2\in \mathfrak a_\C^*(n)$)}.$$
By the Paley-Wiener Theorem \cite{Helgasson:2000} and Lemma \ref{l:construction_annihilating},
 the operator $\natural_p^n H_\delta$ is a polynomial in convolution operators (associated to bi-$SO(n, \R)$-invariant, compactly supported smooth functions),
  and in Hecke operators at the prime $p$.

\begin{lem}\label{l:annihilate_image_nonzero}
Let $M$ be a set of places over $\Q$ including $\infty$. Let $F_{\Pi_M}$ be a quasi-Maass form for $\Pi_M$ and $\widetilde F_{\Pi_M}$ be an automorphic lifting of $F_{\Pi_M}$.
Assume that $T>4( 2^{n-1}\ln p+\delta)$ for a given $\delta>0$. Then for any $z\in \Sigma_{e^T}$, 
	$$\natural_p^n H_\delta \widetilde F_{\Pi_M}(z) = \widehat\natural_p^n(\ell_{\pi_\infty}, \ell_{\pi_p})\cdot\widehat H_\delta(\ell_{\pi_\infty})\cdot F_{\Pi_M}(z)\;.$$
\end{lem}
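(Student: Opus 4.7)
The plan is to show that, for $z \in \Sigma_{e^T}$ with $T > 4(2^{n-1}\ln p + \delta)$, the operator $\natural_p^n H_\delta$ samples $\widetilde F_{\Pi_M}$ only at points lying inside the enlarged fundamental region $\widetilde\fund^n$, where it agrees with the (non-automorphic) quasi-Maass form $F_{\Pi_M}$. Once the computation is transferred to $F_{\Pi_M}$, the claim reduces to the fact that $F_{\Pi_M}$ is a joint eigenfunction of each building block of $\natural_p^n H_\delta$.

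First, by Lemma \ref{l:construction_annihilating} and the explicit factorization (\ref{e:annihilate_polynomial}), $\natural_p^n H_\delta$ is a polynomial combination of smooth bi-$SO(n,\R)$-invariant convolutions $\mathcal L_{\kappa_j^{(k)}} * H_\delta$ and Hecke operators $S_j^{(k)}$ at the prime $p$. The distributions $\kappa_j^{(k)}$ are recovered from their polynomial spherical transforms $a_{j,k}$ via the Paley--Wiener theorem \cite{Helgasson:2000}, while the matrices generating $S_j^{(k)}$ are integer matrices whose determinant divides a power of $p$. Tracking the exponents of $p$ through the full product over pairs of $k$-subsets in (\ref{e: annihilating_eigenvalue}), one bounds the polar-height radius of every distribution $\kappa_j^{(k)}$ and the polar height of every Hecke matrix uniformly by $2^{n-1}\ln p$; convolution with $H_\delta$ adds at most $\delta$ more.

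Second, consider any sample point $w = gz$ arising in the evaluation of $\natural_p^n H_\delta \widetilde F_{\Pi_M}(z)$, so that $\sigma(g) \leq 2^{n-1}\ln p + \delta$. Using Lemma \ref{l:Relations_Iwasawa_Polar} to control how left-multiplication by $g$ distorts the Iwasawa $y$-coordinates yields
\[
y_j(w) \;\geq\; e^{-4(2^{n-1}\ln p + \delta)}\, y_j(z) \;>\; e^{T - 4(2^{n-1}\ln p + \delta)} \;>\; 1
\]
for every $j = 1, \ldots, n-1$. Standard reduction theory for $SL(n,\Z)\backslash \bH^n$ then forces such a high point $w$ to be reducible into $\fund^n$ only by an element of the maximal parabolic $P_{n-1,1}(\Z)$, placing $w$ inside $\widetilde\fund^n = P_{n-1,1}(\Z)\cdot \fund^n$. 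In particular $\widetilde F_{\Pi_M}(w) = F_{\Pi_M}(w)$ at every sample point, so $\natural_p^n H_\delta \widetilde F_{\Pi_M}(z) = \natural_p^n H_\delta F_{\Pi_M}(z)$.

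Finally, the Fourier--Whittaker expansion (\ref{e:Quasi-Maass}) together with the multiplicative construction of the coefficients $A_{\Pi_M}(m_1, \ldots, m_{n-1})$ and the Schur identity (\ref{e:Schur-Hecke}) show that $F_{\Pi_M}$ is a simultaneous eigenfunction of the Casimir operators and of the Hecke operators at $p$, with parameters $\ell_{\pi_\infty}$ and $\ell_{\pi_p}$ respectively. It follows that $\mathcal L_{\kappa_j^{(k)}} F_{\Pi_M} = a_{j,k}(\ell_{\pi_\infty}) F_{\Pi_M}$, $S_j^{(k)} F_{\Pi_M} = b_{j,k}(\ell_{\pi_p}) F_{\Pi_M}$, and $F_{\Pi_M} * H_\delta = \widehat H_\delta(\ell_{\pi_\infty}) F_{\Pi_M}$. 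Assembling these eigenvalues along (\ref{e:annihilate_polynomial}) produces the announced formula. The principal obstacle is the support analysis in the second step---the combinatorial bookkeeping giving the precise constant $2^{n-1}\ln p$ out of (\ref{e: annihilating_eigenvalue}), and the verification that sample points with possibly non-small $x$-coordinates really do reduce into $\fund^n$ using only $P_{n-1,1}(\Z)$; once those are in place, the rest is a purely formal eigenvalue computation.
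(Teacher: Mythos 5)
Your overall strategy matches the paper's: show that every point sampled by $\natural_p^n H_\delta$ from $z\in\Sigma_{e^T}$ lies in $\widetilde\fund^n$ (where $\widetilde F_{\Pi_M}$ and $F_{\Pi_M}$ agree), and then read off the eigenvalue. The first and third steps are fine. But the second step, as written, has a real gap.

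You assert that every sample point is of the form $w=gz$ with $\sigma(g)\leq 2^{n-1}\ln p+\delta$ and then invoke Lemma~\ref{l:Relations_Iwasawa_Polar} to get $y_j(w)\geq e^{-4\sigma(g)}y_j(z)$. Neither part is correct as stated. First, the convolution operators sample at points $zh^{-1}$ (right translation), which cannot be rewritten as $gz$ with $\sigma(g)$ controlled by $\sigma(h)$, since conjugation does not preserve polar height. Second, and more importantly, left multiplication by an element of bounded polar height does not distort Iwasawa $y$-coordinates in a bounded way: already in $SL(2,\R)$, with $g=\left(\begin{smallmatrix}1&0\\\epsilon&1\end{smallmatrix}\right)$ and $z$ diagonal with large $y$, one finds $y(gz)\approx 1/(\epsilon^2 y)\to 0$, even though $\sigma(g)$ is small and fixed. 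Lemma~\ref{l:Relations_Iwasawa_Polar} only bounds the Iwasawa $y$-coordinates of a matrix in terms of its own polar height; it does not give a translation-distortion estimate, and such an estimate is false for general $g$ acting on the left.

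The paper avoids this by treating the two kinds of operators separately. For the convolution part the relevant comparison is right-sided: if $\sigma(zh^{-1})\leq b$ one shows $e^{-4b}\leq y_j(z)/y_j(h)\leq e^{4b}$ by applying Lemma~\ref{l:Relations_Iwasawa_Polar} not to $z$ or $h$ directly but to $z^{-1}h$ (whose Iwasawa $Y$-part is exactly $a_y^{-1}a_v$, because conjugating $N$ by $A^0$ stays inside $N$); this is the step you were implicitly relying on, and it genuinely requires the translation to be on the correct side. For the Hecke part the coset representatives are upper triangular integer matrices, and one uses the explicit action (\ref{e:matrix_left}), which multiplies each $y_j$ by $c_{n-j}/c_n\geq 1/c_n$, with $c_n\leq p^{2^{n-1}}$; no ball-distortion lemma is needed or available here. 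If you split Step 2 into these two cases and replace the appeal to ``distortion under left multiplication by $g$'' with the correct right-translation estimate (for convolution) and the explicit upper-triangular computation (for Hecke), the proof closes.
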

\begin{proof}[Proof of Lemma \ref{l:annihilate_image_nonzero}] Let $\kappa$ be a compactly supported function with support in $B_b=\left\{z\in \bH^n\;|\; \sigma(z)\leq b\right\}$ for some $b>0$. For $t> e^{4b}$, for any $z\in \Sigma_t$, assume that $\sigma(zh^{-1})\leq b$ for some $h\in SL(n, \R)$. Let $\exp\left({\rm Iw}_Y(z)\right) = a_{y_1, \ldots, y_{n-1}}$ and $\exp\left({\rm Iw}_Y(h)\right) = a_{v_1, \ldots, v_{n-1}}$. 
Then, by Lemma \ref{l:Relations_Iwasawa_Polar}, for $j=1, \ldots, n-1$, we have
	$$e^{-4b}\leq\frac{y_j}{v_j}\leq e^{4b}.$$
So, 
	$$v_j  \geq y_j\cdot e^{-4b} \geq t\cdot e^{-4b} >  1.$$
Then $\widetilde F_{\Pi_M}(h) = F_{\Pi_M}(h)$ because $\Sigma_1 \subset \widetilde\fund^n$. So for $z\in \Sigma_t$, we have
	$$
	\widetilde F_{\Pi_M}* \kappa(z) 
	= \int_{z\cdot B_b} \widetilde F_{\Pi_M}(h)\kappa(zh^{-1})\; dh
	= \widehat\kappa(\alpha_\infty)\cdot F_{\Pi_M}(z).
	$$
	
For integers $c_1, \ldots, c_n\geq 1$ and $c_{i, j}\geq 0$ for $1\leq i < j\leq n$, and $z\in \bH^n$, we have
	\begin{align}\label{e:matrix_left}
	\bpm c_1 & c_{1, 2} & \ldots & c_{1, n} \\ & c_2 & \ldots & c_{2, n}\\ & & \ddots & \vdots\\ & & & c_n\ebpm z = x'\cdot a_{\frac{c_{n-1}}{c_n}y_1, \ldots, \frac{c_{n-j}}{c_n}y_j, \ldots, \frac{c_1}{c_n}y_{n-1}}
	\end{align}
	for $x'\in N(n, \R)$ and $\exp({\rm Iw}_Y(z)) = a_{y_1, \ldots, y_{n-1}}$. 
	So, if $\frac{y_j}{c_n}\geq 1$ for $1\leq j\leq n-1$, then $\sm c_1 & c_{1, 2} & \ldots & c_{1, n} \\ & c_2 & \ldots & c_{2, n}\\ & & \ddots & \vdots\\ & & & c_n\esm z\in \Sigma_1$. 
	
As in (\ref{e:annihilate_polynomial}), $\natural_p^n$ is a polynomial in Hecke operators and convolution operators associated with compactly supported distributions $\kappa_j^{(k)}$ for $k=1, \ldots, \lfloor\frac{n}{2}\rfloor$ and $j=0, \ldots, \frac{n!}{k!(n-k)!}$. Moreover, the spherical transform $\left|\widehat\kappa_j^{(k)}(\ell)\right| \ll e^{\frac{n!}{k!(n-k)!} \ln p \|{\rm Re}(\ell)\|}$ for any $\ell\in \mathfrak a_\C^*(n)$ and the implied constant depends on $n$ and $k$. 
So, the operator $\natural_p^n H_\delta$ is a polynomial in Hecke operators and convolution operators associated with compactly supported functions which have support in $B_{b}$ for $b\leq 2^{n-1}\ln p +\delta$, by Paley-Wiener's theorem \cite{Helgasson:2000}. 

Since Hecke operators are generated by left translations as in (\ref{e:matrix_left}), and the largest possible $c_n$ is $p^{2^{n-1}}$ for $\natural_p^n$. Therefore, for any $z\in\Sigma_{e^T}$ for $T> 4(2^{n-1}\ln p +\delta)$, we obtain
	$$\natural_p^n H_\delta\widetilde F_{\Pi_M}(z) = \widehat \natural_p^n(\ell_{\pi_\infty}, \ell_{\pi_p})\cdot\widehat H_\delta(\ell_{\pi_\infty})\cdot F_{\Pi_M}(z)\;.$$
\end{proof}

So if $\widehat\natural_p^n(\ell_{\pi_\infty}, \ell_{\pi_p})\neq 0$, then  $\natural_p^n\left(\widetilde F_{\Pi_M}* H_\delta\right)(z)$ is non trivial for $\delta$ satisfying ${\rm LB}_\delta(\ell_{\pi_\infty})>0$. By Theorem \ref{t:image_cuspidal}, we show that $\natural_p^n\left(\widetilde F_{\Pi_M}* H_\delta\right)(z)$ is a smooth cuspidal automorphic function on $SL(n, \Z)\bsl \bH^n$. We prove Theorem \ref{t:image_cuspidal} in the next section.

\begin{rmk}\label{r:fundamental_domain_explicit}
By \S2 in \cite{Grenier}, we get the following explicit description of the fundamental domain $\fund^n$. Let $n\geq 2$ be an integer and $\overline{\fund^n}$ be the closure of the fundamental domain $\fund^n$. 
\begin{enumerate}[(1)]
	\item For $n=2$, the closure of the fundamental domain $\overline{\fund^n}$ is the set of $z=\sm 1 & x\\ 0 & 1\esm \sm y^{\frac{1}{2}} & 0 \\ 0 & y^{-\frac{1}{2}}\esm\in \bH^2$ for $x, y\in \R$ and $y>0$ satisfying
		$$x^2+y^2\geq 1\hskip 10pt \text{ and }\hskip 10pt |x|\leq \frac{1}{2}\;.$$
	\item For $n >2$, the closure of the fundamental domain $\overline{\fund^n}$ is the set of
		$$z=\bpm & &  &x_1 \\ &I_{n-1} &&\vdots\\ & & &x_{n-1}\\ 0 & \ldots & 0 & 1\ebpm y_1^{-\frac{n-1}{n}}\bpm & & & 0 \\ & y_1z' & & \vdots\\ & & & 0 \\ 0 & \ldots & 0 & 1\ebpm$$
		for $x_1, \ldots, x_{n-1}\in \R$ and $y_1>0$ satisfying the following conditions:
		\begin{enumerate}[(i)]
		\item $z'\in \overline{\fund^{n-1}}$ ;
		\item for any $\bpm & & & b_1\\ & * & & \vdots\\ & & & b_{n-1} \\ c_1 & \ldots & c_{n-1} & a\ebpm\in SL(n, \Z)$, we have
		$$(a+c_1x_1+\cdots +c_{n-1}x_{n-1})^2+y_1^2 \left(\prod_{j=2}^{n-1} y_j^{n-j}\right)^{\frac{2}{n-1}}\bpm c_1 & \ldots & c_{n-1}\ebpm z' \;^tz' \bpm c_1\\ \vdots \\ c_{n-1}\ebpm\geq 1\;;$$
		where $\exp\left({\rm Iw}_Y(z')\right) = \left(\prod_{j=2}^{n-1}y_j^{n-j}\right)^{-\frac{1}{n-1}}\cdot \sm y_2\cdots y_{n-1} & & & \\ & y_2\cdots y_{n-2} & & \\ & & \ddots & \\ & & & 1\esm$.
		\item $|x_j|\leq \frac{1}{2}$ for $j=1, \ldots, n-1$. 
		\end{enumerate}
\end{enumerate}
\end{rmk}

\subsection{Proof of Theorem \ref{t:image_cuspidal}}

As suggested in the Appendix \cite{Lindenstrauss-Venkatesh}, we prove Theorem \ref{t:image_cuspidal} for the annihilating operator $\natural_p^n$. 

\begin{rmk}
Let $\ell_1=(\ell_{1, 1}, \ldots, \ell_{1, n}), \; \ell_2=(\ell_{2, 1}, \ldots, \ell_{2, n})\in \mathfrak a_\C^*(n)$.
By definition, we have  
	$$\widehat\natural_p^n(\ell_1, \ell_2) = 0$$
 whenever $\left(\ell_{1, i_1}+\cdots+\ell_{1, i_r}\right)+ \left(\ell_{2, j_1}+\cdots +\ell_{2, j_r}\right)=0$ for any $1\leq r\leq n$.
By \cite{Lindenstrauss-Venkatesh}, it can be proved that the image of the annihilating operator $\natural_p^n$ is cuspidal. Here we give an explicit proof. 
\end{rmk}

For $\delta>0$, let $H_\delta$ be a standard bump function. 
   Then the operator $\natural_p^n H_\delta$ can be defined for the functions in $L^2\left(\bH^n\right)$ and $\widehat{\natural_p^n H_\delta}(\ell_1, \ell_2) = \widehat\natural_p^n(\ell_1, \ell_2)\cdot\widehat H_\delta(\ell_1)$ for any $\ell_1, \; \ell_2\in \mathfrak a_\C^*(n)$.
	
The Langlands spectral decomposition states that
	\begin{multline*}
	\LtwoH \\
	 = \LtwoHcont \oplus \LtwoHresi \oplus \LtwoHcusp
	\end{multline*}
where $L^2_{\rm cusp}$ denote the space of Maass forms, $L^2_{\rm resi.}$ consists of iterated residues of Eisenstein series and $L^2_{\rm cont.}$ is the space spanned by integrals of Eisenstein series.
 For any $f\in \LtwoH$, there exists $f_{\rm cont.}\in L^2_{\rm cont.}$, $f_{\rm resi.}\in L^2_{\rm resi.}$ and $f_{\rm cusp}\in L^2_{\rm cusp}$ such that
	$$f(z) = f_{\rm cont.}(z) +f_{\rm resi.}(z)+f_{\rm cusp}(z) \;.$$

Our goal is to show that $\natural_p^nH_\delta f_{\rm cont.} = \natural_p^n H_\delta f_{\rm resi.} \equiv 0$, therefore $\natural_p^n H_\delta f(z) = \natural_p^n H_\delta f_{\rm cusp}(z)$. We should show that for any Eisenstein series $E$, $\natural_p^n E = 0$ for any prime $p$.

Review some facts on Eisensteins series on $SL(n, \Z)\bsl \bH^n$ \cite{Goldfeld:2006}. 
Let $n\geq 2$ be an integer. 
For each partition $n=n_1+\cdots +n_r$ with rank $1\leq r\leq n$, we have the factorization
	$$P_{n_1, \ldots, n_r}(n, \R) = N_{n_1, \ldots, n_r}(n, \R)\cdot M_{n_1, \ldots, n_r}(n, \R)\;.$$
It follows that for any $g\in P_{n_1, \ldots, n_r}(n, \R)$, we have
	$$g\in N_{n_1, \ldots, n_r}(n, \R) \cdot\bpm \mathfrak m_{n_1}(g) & 0 & \ldots & 0 \\ & \mathfrak m_{n_2}(g) & \ldots & 0 \\ & & \ddots & \vdots \\ & & & \mathfrak m_{n_r}(g)\ebpm, $$
where  $\mathfrak m_{n_i}(g)\in SL(n_i, \R)$ for $i=1, \ldots, r$. 
	
Let $n\geq 2$ be an integer and fix a partition $n=n_1+\cdots +n_r$ with $1\leq n_1,\ldots, n_r\leq n$. 
For each $i=1, \ldots r$, let $\phi_i$ be either a Maass form for $SL(n_i, \Z)\bsl \bH^{n_i}$ of type $\ell_\infty(\phi_i)\in \mathfrak a_\C^*(n_i)$ or a constant with $\ell_\infty(\phi_i) =(0, \ldots, 0)$. 
For $t=(t_1, \ldots, t_r)\in \C^r$ with $n_1t_1+\cdots +n_rt_r=0$, define a function
	$$\varphi_{n_1, \ldots, n_r}(\cdot\; ; t; \phi_1, \ldots, \phi_r) : P_{n_1, \ldots, n_r}(n, \R)\to \C$$
	by the formula
	$$\varphi_{n_1, \ldots, n_r}(g; t; \phi_1, \ldots, \phi_r):=\prod_{i=1}^r \phi_i(\mathfrak m_{n_i}(g))\cdot\left|\det(\mathfrak m_{n_i}(g))\right|^{t_i}$$
for $g\in P_{n_1, \ldots, n_r}(n, \R)$. We can check that $\varphi_{n_1, \ldots, n_r}(g; t;\phi_1, \ldots, \phi_r) = \varphi_{n_1, \ldots, n_r}(z; t;\phi_1, \ldots, \phi_r)$ for $g=z\xi$ with $z\in \bH^n$ and $\xi\in SO(n, \R)$. 
Define the Eisenstein series by the infinite series
	\begin{align}\label{e:EisensteinSeries}
	E(z)&=E_{n_1, \ldots, n_r}(z; t; \phi_1,\ldots, \phi_r) \\\notag
	&:=\sum_{\gamma\in P_{n_1, \ldots, n_r}(n, \Z)\cap SL(n, \Z)\bsl SL(n, \Z)} \varphi_{n_1, \ldots, n_r}(\gamma z; t; \phi_1, \ldots, \phi_r)
	\end{align}
for $z\in \bH^n$. Then the Eisenstein series $E$ is an eigenfunction of $\mathcal Z^n$ of type $\ell_\infty(E)$. The Eisenstein series is also an eigenfunction of Hecke operators with a parameter $\ell_p(E)$ for any prime $p$, if $\phi_1, \ldots, \phi_r$ are Hecke eigenfunctions. The Eisenstein series are not contained in $\LtwoH$, but they generate the continuous and residual spectrum in $\LtwoH$. 

The following lemma shows that an Eisenstein series is controlled by few parameters for the archimedean. 
\begin{lem}\label{l:Eisenstein_type_all} Let $n\geq 2$ be an integer. Fix a partition $n=n_1+\cdots +n_r$ with $1\leq n_1, \ldots, n_r<n$. For each $i=1, \ldots, r$, let $\phi_i$ be either a Hecke-Maass form for $SL(n_i, \Z)\bsl \bH^{n_i}$ of type $\ell_\infty(\phi_i)\in\mathfrak a_\C^*(n_i)$ or a constant with $\ell_\infty(\phi_i) = (0, \ldots, 0)$. Let $t=(t_1, \ldots, n_r)\in \C^r$ with $n_1t_1+\cdots +n_rt_r=0$. Let $E(z):=E_{n_1, \ldots, n_r}(z; t;\phi_1, \ldots, \phi_r)$ be an Eisenstein series (\ref{e:EisensteinSeries}). Let $\eta_1=0$ and $\eta_i=n_1+\cdots +n_{i-1}$ for $i=2, \ldots, r$. Then for $i=1, \ldots, r$ and $\eta_i+1\leq j \leq \eta_i+n_i$, we have
	\begin{align}\label{e:Eisenstein_type_all}
	\ell_{v, j}(E) = (-1)^\delta \left(\frac{n_i-n}{2}+t_i+\eta_i\right)+\ell_{v, j-\eta_i}(\phi_i)
	\end{align}
where $\delta=\left\{\begin{array}{ll} 0, & \text{ if } v=\infty; \\ 1, & \text{ if } v<\infty\;. \end{array}\right.$
\end{lem}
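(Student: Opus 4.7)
My plan is to compute $\ell_\infty(E)$ and $\ell_p(E)$ by exploiting the fact that $\mathcal C_n^{(j)}$ and $T_p^{(j)}$ commute with the left translations $z\mapsto\gamma z$ used in forming the Poincar\'e-type sum (\ref{e:EisensteinSeries}). Consequently $E$ and the single inducing section $\varphi_{n_1,\ldots,n_r}(\cdot\,;t;\phi_1,\ldots,\phi_r)$ are simultaneous eigenfunctions of $\mathcal Z^n$ and of the Hecke algebra with the same parameters, so it suffices to determine the type of $\varphi_{n_1,\ldots,n_r}(\cdot\,;t;\phi_1,\ldots,\phi_r)$ at every place.

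For $v=\infty$, I would evaluate on a torus element $a_{y_1,\ldots,y_{n-1}}\in A^0(n,\R)$. Extracting the $i$-th diagonal block and renormalizing by the $n_i$-th root of its determinant writes $\mathfrak m_{n_i}(a_{y_1,\ldots,y_{n-1}})$ as an element $a_{y_1^{(i)},\ldots,y_{n_i-1}^{(i)}}\in A^0(n_i,\R)$, where the $y_\bullet^{(i)}$ are explicit monomials in the original $y_k$. Inserting the power function (\ref{e:eigenfunction_Casimir}) for each $\phi_i$ and folding in the twist $|\det\mathfrak m_{n_i}|^{t_i}$ presents $\varphi_{n_1,\ldots,n_r}(a_{y_1,\ldots,y_{n-1}};t;\phi_1,\ldots,\phi_r)$ as a single monomial $\prod_{k=1}^{n-1} y_k^{E_k}$, with the exponents $E_k$ linear in the $t_i$ and in the coordinates of the $\ell_\infty(\phi_i)$. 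Matching $E_k$ against the exponent $\sum_{\kappa=1}^{n-k}\bigl(\ell_{\infty,\kappa}(E)+\frac{n-2\kappa+1}{2}\bigr)$ of $y_k$ in $\varphi_{\ell_\infty(E)}$ and solving the resulting triangular linear system gives
\[
\ell_{\infty,j}(E)=\frac{n_i-n}{2}+t_i+\eta_i+\ell_{\infty,j-\eta_i}(\phi_i),\qquad\eta_i+1\leq j\leq\eta_i+n_i,
\]
which is (\ref{e:Eisenstein_type_all}) with $\delta=0$. A quick consistency check is that summing over $j$ and using $\sum_i n_i t_i=0$ produces $\sum_j\ell_{\infty,j}(E)=0$, as required for a parameter in $\mathfrak a_\C^*(n)$.

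For $v=p$ a finite prime, I would use the Schur polynomial description (\ref{e:Schur-Hecke}) of the Hecke eigenvalues together with the Satake isomorphism. Since $\phi_i$ is a Hecke eigenfunction with Satake parameters $p^{-\ell_{p,k}(\phi_i)}$ for $k=1,\ldots,n_i$, and the character $|\det\mathfrak m_{n_i}|^{t_i}$ uses the $p$-adic absolute value (for which $|p|_p^{t_i}=p^{-t_i}$), this character shifts each Satake parameter of $\phi_i$ by the factor $p^{+(\frac{n_i-n}{2}+t_i+\eta_i)}$. Concatenating over $i$ produces the Satake data for $E$, and matching with $p^{-\ell_{p,j}(E)}$ as in (\ref{e:parameters_local_representation}) yields
\[
\ell_{p,j}(E)=-\Bigl(\frac{n_i-n}{2}+t_i+\eta_i\Bigr)+\ell_{p,j-\eta_i}(\phi_i),
\]
which is (\ref{e:Eisenstein_type_all}) with $\delta=1$. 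The overall sign discrepancy between the two places is precisely the difference between the archimedean normalization, in which $\ell$ appears as an exponent of $y$ in $\varphi_\ell$, and the non-archimedean one, in which $\ell$ appears through $p^{-\ell}$ in the Satake data; equivalently it reflects $|p|_p=p^{-1}$ versus $|e^x|_\infty=e^x$.

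The main obstacle is the bookkeeping in the archimedean step: one must compute the monomials $y_\bullet^{(i)}$ in terms of $y_1,\ldots,y_{n-1}$, combine the $SL$-normalizing factor $(\det\mathfrak m^\circ_{n_i})^{-1/n_i}$ with the twist of weight $t_i$, and re-index the resulting exponents by $j=\eta_i+k$ to match (\ref{e:eigenfunction_Casimir}). The precise affine shift $\frac{n_i-n}{2}+t_i+\eta_i$ is pinned down entirely at this archimedean step, after which the non-archimedean statement is forced by the Satake/Langlands correspondence and the sign convention recorded above.
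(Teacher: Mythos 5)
Your archimedean strategy is sound: the Casimir operators are left-invariant differential operators (they act by right differentiation), so they commute with the left translations $z\mapsto\gamma z$ that form the Eisenstein sum, and the $\mathcal Z^n$-type of $E$ equals that of the inducing section $\varphi_{n_1,\ldots,n_r}(\cdot\,;t;\phi_1,\ldots,\phi_r)$. To compute the latter you cannot literally \emph{substitute} the power function $\varphi_{\ell_\infty(\phi_i)}$ for $\phi_i$ (a Maass form is not a power function), but you are allowed to \emph{replace} $\phi_i$ by $\varphi_{\ell_\infty(\phi_i)}$ because the $\mathcal Z^n$-eigenvalues of $\varphi_{n_1,\ldots,n_r}(\cdot\,;t;\psi_1,\ldots,\psi_r)$ depend only on the infinitesimal characters of the $\psi_i$; with this fix your torus computation does produce the claimed shift $\tfrac{n_i-n}{2}+t_i+\eta_i$ (I carried out the exponent bookkeeping and it matches). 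This is a genuinely different and more self-contained route than the paper, which at the archimedean place simply cites Proposition 10.9.1 of Goldfeld.

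The non-archimedean half has a real gap. First, the opening premise that $T_p^{(j)}$ ``commutes with the left translations used in forming the Poincar\'e-type sum'' is false: for a non-automorphic $f$, $T_Nf(\gamma z)=N^{-(n-1)/2}\sum_{\alpha}f(\alpha\gamma z)$ is not $N^{-(n-1)/2}\sum_{\alpha}f(\gamma\alpha z)$, and the inducing section, being only left $P(\Z)$-invariant, has no canonical Hecke eigenvalue to compare with $\ell_p(E)$. Second, your explanation of the sign flip (``the character $|\det\mathfrak m_{n_i}|^{t_i}$ uses the $p$-adic absolute value'') is wrong on its face: in (\ref{e:EisensteinSeries}) the argument is $g\in SL(n,\R)$ and $|\det|$ is the archimedean absolute value. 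The sign flip in fact arises because $T_{p^k}E$ is computed by rearranging the double sum $\sum_\alpha\sum_\gamma\varphi(\gamma\alpha z)$ into a double coset decomposition; the Hecke matrices $\alpha$ then feed a factor $p^{k_it_i}$ (a \emph{positive} power, coming from the archimedean $|\det\mathfrak m_{n_i}(\alpha)|^{t_i}$), which in the $p^{-\ell}$ normalization of (\ref{e:Schur-Hecke}) becomes a shift by $-t_i$. That double coset computation is exactly Proposition 10.9.3 of Goldfeld, which the paper cites and then unwinds with the multiplicative relations (\ref{e:Hecke_relation}); an appeal to ``the Satake/Langlands correspondence'' does not replace it, since the combinatorics of the coset decomposition is precisely what produces the affine part $\tfrac{n_i-n}{2}+\eta_i$ of the shift.
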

\begin{proof}[Proof of Lemma \ref{l:Eisenstein_type_all}]

By Proposition 10.9.1 \cite{Goldfeld:2006}, the Eisenstein series $E(z)$ is an eigenfunction of Casimir operators of type $\ell_\infty(E)$. 

For an integer $N\geq 1$, let $A_{\phi_i}(N)\in \C$ be the Hecke eigenvalue of $T_{N}$ for $\phi_i$ for $i=1, \ldots, r$. Then by Proposition 10.9.3 \cite{Goldfeld:2006}, the Eisenstein series $E(z)$ is an eigenfunction of the Hecke operators $T_{p^k}$ (for any $k\geq 0$ and prime $p$) with eigenvalues
	$$A_E(p^k) = p^{-\frac{k(n-1)}{2}}\sum_{k_1+\cdots +k_r=k, \atop 0\leq k_j\in \Z} \prod_{j=1}^r \left(A_{\phi_j}(p^{k_j})\cdot p^{k_j\left(\frac{n_j-1}{2}+t_j+\eta_j\right)}\right)\;\;.$$
By using the multiplicative relations (\ref{e:Hecke_relation}), we get the formula (\ref{e:Eisenstein_type_all}). 
\end{proof}

By the lemma above, for any Eisenstein series $E$, we have
	$$\widehat\natural_p^n(\ell_\infty(E), \ell_p(E)) = 0$$
for any prime $p$. 
So $\natural_p^n E=0$ for any prime $p$. Moreover, for any constant $C\in \C$, we have $\natural_p^n C=0$ for any prime $p$. 
Since the invariant integral operators and Hecke operators preserve the space of cuspidal functions, we have $\natural_p^n H_\delta f = \natural_p^n H_\delta f_{\rm cusp}$.
	Therefore  the image of $\natural_p^n H_\delta$ on $\LtwoH$ is cuspidal.
	
Assume that $f$ is a self-dual Hecke-Maass form for $SL(n, \Z)$. 
By definition, we get $\ell_\infty(f) = -\ell_\infty(f)$ and $\ell_p(f) = -\ell_p(f)$ for any prime $p$, up to permutation. 
So $\widehat\natural_p^n(\ell_\infty(f), \ell_p(f))=0$ and $\natural_p^n f=0$ for any prime $p$. Therefore the image of $\natural_p^n H_\delta$ on $\LtwoH$ is generated by non self-dual Hecke-Maass forms. 
	
We already show that the image of $\natural_p^n H_\delta$ on $\LtwoH$ is non-trivial in  Lemma \ref{l:annihilate_image_nonzero}. 
So it remains to prove that 
the image is infinite dimensional. 

Take $\alpha_\infty=(\alpha_{\infty,1}, \ldots, \alpha_{\infty, n})$, $\alpha_p=(\alpha_{p, 1}, \ldots, \alpha_{p, n})\in \mathfrak a_\C^*(n)$ 
 such that $\widehat\natural_p^n(\alpha_\infty, \alpha_p)\neq 0$. Assume that $\rRe(\alpha_{v, j})=0$ for $1\leq j\leq n$, for $v=\infty$, $p$. 
 Take $0<\delta\leq \ln\left(\frac{n(n+6)}{8}\left(\sum_{j=1}^n \left|\ell_{\pi_\infty, j}+\frac{n-2j+1}{2}\right|\right)^{-1}+1\right)$, then we have 
  $\left|\widehat H_\delta(\alpha_\infty)\right|> \frac{1}{2}$
 by Lemma \ref{l:lowerbd_bump_funct}.

 As in Definition \ref{d:Quasi-Maass}, we construct a quasi-Maass form $F$ for $\{\alpha_\infty, \alpha_p\}$ such that 
	\begin{align*}
	F(z) &= \sum_{\gamma\in N(n-1, \Z)\bsl SL(n-1, \Z)}\sum_{\epsilon=\pm 1}\sum_{k_1, \ldots, k_{n-1}\geq 0} \frac{A_F\left(p^{k_1}, \ldots, p^{k_{n-1}}\right)}{p^{\frac{1}{2}\sum_{j=1}^{n-1} k_j(n-j)j}}\\
	&\hskip 50pt \times W_J\left(\bpm p^{k_1+\cdots +k_{n-1}} & & & \\ & p^{k_1+\cdots +k_{n-2}} & &  \\ & & \ddots & \\ & & & 1\ebpm \bpm \gamma & \\ & 1\ebpm \;z; \alpha_\infty, \epsilon\right)
	\end{align*}
where $A_F(p^{k_1}, \ldots, p^{k_{n-1}}) = S_{k_1, \ldots, k_{n-1}}(p^{-\alpha_{p, 1}},\ldots, p^{-\alpha_{p, n}})$.  
	Then 
	$$\natural_p^n H_\delta F(z) = \widehat\natural_p^n(\alpha_\infty, \alpha_p)\cdot\widehat H_\delta(\alpha_\infty)\cdot F(z)$$
for $z\in \bH^n$. 

Let $\widetilde F$ be the automorphic lifting of $F$ as Definition \ref{d:auto-lifting}. Then $\natural_p^n H_\delta\widetilde F \in \LtwoHcusp$ is smooth and cuspidal as we show above. 
By Lemma \ref{l:annihilate_image_nonzero},  $\natural_p^n H_\delta\widetilde F$ is non-trivial since  
 $\widehat H_\delta(\alpha_\infty)\cdot \widehat\natural_p^n(\alpha_\infty, \alpha_p)\neq 0$.

Assume that the space of the image of $\natural_p^n H_\delta$ on $\LtwoH$ is finite dimensional. 
Let 
	$$\natural_p^n \mathcal U:=\left\{u_j, \; \text{ a Hecke-Maass form of type }\mu_j\in \mathfrak a_\C^*(n)\;\left|\; \natural_p^n u_j\neq 0, \atop \text{ and }||u_j||_2^2=1\right.\right\}.$$
This finite set $\natural_p^n\mathcal U$ is not empty and it is an orthonormal basis of the image of $\natural_p^n H_\delta$ on $\LtwoH$. 
Let $B$ be the number of elements of $\natural_p^n\mathcal U$. 
Then $\natural_p^n\mathcal U = \left\{u_1, \ldots, u_B\right\}$ 
and there exist $c_1, \ldots, c_B\in \C$ such that
	\begin{align}\label{e:finite_expansion_testfunct}
	\natural_p^n H_\delta\widetilde F(z) = \sum_{j=1}^B c_j u_j(z).
	\end{align}
Here $c_j\neq 0$ for at least one $j=1, \ldots, B$. Assume that $c_1\neq 0$. 

Let $T> 4(2^{n-1}\ln p +\delta)$. 
For any $z\in \Sigma_{e^T}$, we have
	$$\left(\mathcal C_n^{(i)}-\lambda_\infty^{(i)}(\alpha_\infty)\right)\natural_p^n H_\delta \widetilde F(z) =\sum_{j=1}^B c_j \left(\mathcal C_n^{(i)}-\lambda_\infty^{(i)}(\alpha_\infty)\right)u_j(z) = 0$$
for any $i=1, \ldots, n-1$. Since $B$ is a finite positive integer, it is possible to assume that $\alpha_\infty\neq \mu_j$ (up to permutations) for any $j=1, \ldots, B$. 
Then there exists $i=1, \ldots, n-1$ such that
	$$\lambda_\infty^{(i)}(\mu_1)-\lambda_\infty^{(i)}(\alpha_\infty)\neq 0.$$
So there exists $c_2', \ldots, c_B'\in \C$ such that
	$$u_1(z) = \sum_{j=2}^B c_j'\cdot u_j(z)$$
for $z\in \Sigma_{e^T}$.
 So in a similar manner, we deduce that there exists $1\leq j\leq B$ such that 
	$$u_j(z) = 0$$
for $z\in \Sigma_{e^T}$. This gives a contradiction. 

Therefore $\natural_p^n \mathcal U$ should be an infinite set. It follows that the image of $\natural_p^n H_\delta$ on $\LtwoH$ is infinite dimensional.

\section{Proof}
\subsection{Proof of Theorem \ref{t:main}}
Let $M$ be a set of places over $\Q$ including $\infty$. For a given  set of local representations $\Pi_M$, we construct the quasi-Maass form $F_{\Pi_M}(z)$ for $\Pi_M$ and its automorphic lifting $\widetilde F_{\Pi_M}(z)$ with respect to the fixed fundamental domain $\fund^n$ as in Definition \ref{d:Quasi-Maass} and Definition \ref{d:auto-lifting} respectively. For each local representation $\pi_v\in \Pi_M$, we have the Satake (or Langlands) parameter $\ell_{\pi_v} = (\ell_{\pi_v, 1}, \ldots, \ell_{\pi_v, n})\in\mathfrak a_\C^*(n)$ as in (\ref{e:parameters_local_representation}). By Lemma \ref{l:lowerbd_bump_funct}, for a given $\delta$, the standard bump function $H_\delta$ satisfies $\left|\widehat H_\delta(\ell_{\pi_\infty})\right| > \frac{1}{2}$.
By Theorem \ref{t:image_cuspidal}, 
	$$\natural_p^nH_\delta \widetilde F_{\Pi_M}=\natural_p^n\left(\widetilde F_{\Pi_M}* H_\delta\right) \in \LtwoHcusp$$
	and $\natural_p^n H_\delta\widetilde F_{\Pi_M}$ is non-trivial since $\widehat\natural_p^n(\ell_{\pi_\infty}, \ell_{\pi_p})\cdot \widehat H_\delta(\ell_{\pi_\infty})\neq 0$. 

The key idea to prove Theorem \ref{t:main} is applying the following lemma to the cuspidal function $\natural_p^n H_\delta \widetilde F_{\Pi_M}(z)$. This lemma is a generalization of Lemma 1 \cite{Booker-Str-Venkatesh}. 

\begin{lem}\label{l:strategy}
	Let $n\geq 2$ be an integer. Let $M$ be a set of places of $\Q$ including $\infty$ and $\Pi_M$ be a set of local representations as in the introduction. 
	Let $S\subset M$ be a finite subset including $\infty$. 
	If there exists a non-zero smooth function $f\in\LtwoH$, which is cuspidal, such that 
	\begin{align}\label{e:strategy}
	\sum_{j=1}^{n-1}||\left(\mathcal C_n^{(j)}-\lambda_\infty^{(j)}(\ell_{\pi_\infty})\right)f||_2^2 +\sum_{q\in S, \atop \text{ finite }}\sum_{j=1}^{\lfloor\frac{n}{2}\rfloor}||\left(T_q^{(j)}-\lambda_q^{(j)}(\ell_{\pi_q})\right)f||_2^2 \; < \; \epsilon \cdot ||f||_2^2
	\end{align}
for $\pi_v\in \Pi_M$, for some $\epsilon>0$, 
then there exists an unramified cuspidal automorphic representation $\sigma$ of $\A^\times \bsl GL(n, \A)$ such that $d_S(\sigma,\Pi_M)<\epsilon$. 
\end{lem}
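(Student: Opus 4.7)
The plan is to exploit the cuspidality and smoothness of $f$ in order to expand it in an orthonormal basis of Hecke–Maass forms, and then apply Parseval to turn the norm inequality into a weighted average of distances $d_S(\sigma,\Pi_M)$, from which the desired $\sigma$ drops out by pigeonhole.

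First I would recall that $L^2_{\rm cusp}(SL(n,\Z)\bsl \bH^n)$ admits an orthonormal basis $\{u_k\}_{k\ge 1}$ consisting of Hecke–Maass forms. Even though $T_q^{(j)}$ is not self-adjoint for $n\geq 3$, the Hecke algebra generated by $\{T_q^{(j)}\}$ and their adjoints is a commutative family of normal operators, hence simultaneously diagonalizable by the spectral theorem; together with the Casimirs one gets such a basis. Each $u_k$ corresponds to an everywhere-unramified cuspidal automorphic representation $\sigma_k$ of $\A^\times\bsl GL(n,\A)$, with archimedean parameter $\ell_\infty(u_k)$ and local parameters $\ell_q(u_k)$ satisfying
$$\mathcal C_n^{(j)}u_k=\lambda_\infty^{(j)}(\ell_\infty(u_k))\,u_k,\qquad T_q^{(j)}u_k=\lambda_q^{(j)}(\ell_q(u_k))\,u_k.$$

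Next, since $f\in L^2_{\rm cusp}$ is smooth and cuspidal, write $f=\sum_k c_k u_k$ with $c_k=\langle f,u_k\rangle$ and $\|f\|_2^2=\sum_k|c_k|^2$. Applying Casimir and Hecke operators termwise, and using orthonormality, Parseval yields
$$\bigl\|(\mathcal C_n^{(j)}-\lambda_\infty^{(j)}(\ell_{\pi_\infty}))f\bigr\|_2^2=\sum_k|c_k|^2\bigl|\lambda_\infty^{(j)}(\ell_\infty(u_k))-\lambda_\infty^{(j)}(\ell_{\pi_\infty})\bigr|^2$$
and similarly for $T_q^{(j)}-\lambda_q^{(j)}(\ell_{\pi_q})$. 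Summing over $j$ and $q\in S$, the left hand side of \eqref{e:strategy} becomes
$$\sum_k|c_k|^2\, d_S(\sigma_k,\Pi_M),$$
precisely by the definition \eqref{e:close} of the distance.

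Finally, the hypothesis \eqref{e:strategy} reads
$$\sum_k|c_k|^2\, d_S(\sigma_k,\Pi_M)<\epsilon\sum_k|c_k|^2.$$
Since $f\neq 0$, at least one $c_k$ is nonzero, and by pigeonhole there must exist an index $k_0$ with $c_{k_0}\neq 0$ for which $d_S(\sigma_{k_0},\Pi_M)<\epsilon$. Taking $\sigma:=\sigma_{k_0}$ gives the required unramified cuspidal automorphic representation.

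The only real subtlety I would dwell on is justifying the orthonormal Hecke basis in the non-self-adjoint regime $n\geq 3$: one needs to invoke normality of the Hecke operators (as noted in \S\ref{ss:Maass_Eisenstein}), to argue that the spectral theorem applies to the full commutative $*$-algebra they generate, and that the resulting basis simultaneously diagonalizes the Casimirs (which is automatic since $\mathcal Z^n$ commutes with Hecke operators). Everything else is a direct Parseval computation and an averaging argument, so I would expect this to be the main, though still mild, obstacle.
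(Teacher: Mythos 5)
Your proposal is correct and follows essentially the same route as the paper: expand $f$ in the orthonormal basis of Hecke--Maass forms, use Parseval to rewrite the left-hand side of \eqref{e:strategy} as $\sum_k |c_k|^2\, d_S(\sigma_k,\Pi_M)$, and conclude by averaging. The paper packages the final step as showing that the orthogonal projection onto the span of $\{u_k : d_S(\sigma_k,\Pi_M)<\epsilon\}$ is nonzero, which is exactly your pigeonhole argument in different clothing.
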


\begin{proof}[Proof of Lemma \ref{l:strategy}]
	By the spectral decomposition, the space $\LtwoHcusp$ is spanned by Hecke-Maass forms $u_i(z)$ with $||u_i||_2^2=1$ for $i=1, 2,\ldots$. For each $u_i$, there exists an unramified cuspidal automorphic representation $\sigma_i$ of $\A^\times\bsl GL(n, \A)$ such that $u_j$ is the Hecke-Maass form for $\sigma_j$. Then, for any $f\in \LtwoHcusp$, we have
		$$f(z) = \sum_{i=1}^\infty \left<f, u_i\right>\cdot u_i(z)\;.$$

For $\epsilon>0$, let
	$$\mathcal U_\epsilon(\Pi_M) :=\left\{u_i\;\left|\;d_S(\sigma_i, \Pi_M)<\epsilon\right.\right\}$$
and define
	$${\rm Pr}_\epsilon f(z):= \sum_{u_i\in\mathcal U_\epsilon(\Pi_M)} \left<f, u_i\right>\cdot u_i(z)\hskip 5pt\in \hskip 5pt \LtwoHcusp\;.$$

Assume that $f$ is smooth and satisfies (\ref{e:strategy}). Then 
	\begin{align*}
	&||{\rm Pr}_\epsilon f||_2^2 = ||f||_2^2 -\sum_{u_i\notin \mathcal U_\epsilon(\Pi_M)} \left|\left<f, u_i\right>\right|^2 \\
	 &\quad \geq ||f||_2^2\\
	&\quad - \sum_{i=1}^\infty \left|\left<f, u_i\right>\right|^2\cdot\frac{1}{\epsilon}\cdot\left\{\sum_{j=1}^{n-1}\left|\lambda_\infty^{(j)}(\sigma_i)-\lambda_\infty^{(j)}(\ell_{\pi_\infty})\right|^2 +\sum_{q\in S, \atop \text{ finite}} \sum_{j=1}^{\lfloor\frac{n}{2}\rfloor}\left|\lambda_q^{(j)}(\sigma_i)-\lambda_q^{(j)}(\ell_{\pi_q})\right|^2\right\}\\
	& \quad = ||f||_2^2-\frac{1}{\epsilon}\cdot \left\{\sum_{j=1}^{n-1}||\left(\mathcal C_n^{(j)}-\lambda_\infty^{(j)}(\ell_{\pi_\infty})\right) f||_2^2 +\sum_{q\in S, \atop \text{ finite}}\sum_{j=1}^{\lfloor\frac{n}{2}\rfloor}||\left(T_q^{(j)}-\lambda_q^{(j)}(\ell_{\pi_q})\right)f||_2^2\right\}\\
	&\quad > 0.
	\end{align*}
Therefore $\mathcal U_\epsilon(\Pi_M)\neq \emptyset$. 
\end{proof}

We are going to construct a formula for $\epsilon$ satisfying 
	\begin{align*}
	\frac{1}{||H_\delta \natural_p^n \widetilde F_{\Pi_M}||_2^2}\cdot &\left\{\sum_{j=1}^{n-1}||\left(\mathcal C_n^{(j)}-\lambda_\infty^{(j)}(\ell_{\pi_\infty})\right)H_\delta\natural_p^n \widetilde F_{\Pi_M}||_2^2\right.\\
	&\quad+\left.\sum_{q\in S, \atop \text{ finite}}\sum_{j=1}^{\lfloor\frac{n}{2}\rfloor}||\left(T_q^{(j)}-\lambda_q^{(j)}(\ell_{\pi_q})\right)H_\delta\natural_p^n\widetilde F_{\Pi_M}||_2^2\right\} <\epsilon.
	\end{align*} 
Then by Lemma \ref{l:strategy}, there exists an unramified cuspidal representation $\pi$ such that $d_S(\pi, \Pi_M)< \epsilon$.

The following lemma gives the lower bound for $||H_\delta\natural_p^n \widetilde F_{\Pi_M}||_2^2$.

\begin{lem}\label{l:lowerbd}
Let $n\geq 2$ be an integer. For a cuspidal function $f\in \LtwoH$, for $(m_1, \ldots, m_{n-1})\in \Z^n$, let
	\begin{multline*}
	W_f(z; m_1, \ldots, m_{n-2}, m_{n-1}) \\
	:=\int\limits_{(N(n, \R)\cap SL(n, \Z))\bsl N(n, \R)} f(uz)\; d^{-2\pi i(m_1u_{n-1, n}+\cdots +m_{n-2}u_{2, 3}+m_{n-1}u_{1, 2})}\; d^*u
	\end{multline*}
	where $u=\sm 1 & u_{1, 2}  & \ldots & u_{1, n}\\ & \ddots & & \vdots \\  && 1 & u_{n-1, n}\\ & & & 1\esm\in N(n, \R)$, for $z\in \bH^n$.
Then, for $T\geq 1$, we have
	$$||f||_2^2\; > \; \sum_{m_1=1}^\infty \cdots\sum_{m_{n-2}=1}^\infty \sum_{m_{n-1}\neq 0} \int_T^\infty\cdots\int_T^\infty \left|W_f(y; m_1, \ldots, m_{n-2}, m_{n-1})\right|^2\; d^*y$$
	where
	$y=a_{y_1, \ldots, y_{n-1}}\in A^0(n, \R)$.
\end{lem}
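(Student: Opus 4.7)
The plan is to combine two standard ingredients: a Siegel set approximation of the fundamental domain $\fund^n$, and Parseval's identity on the compact torus of unipotent variables.

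First I would use the fact, built into the construction of $\fund^n$ recalled in \S\ref{ss:Quasi-Maass_forms}, that $\Sigma_{1, \frac{1}{2}} \subset \fund^n$. Since $T \geq 1$, we have $\Sigma_{T, \frac{1}{2}} \subset \Sigma_{1, \frac{1}{2}} \subset \fund^n$, so
	\[
	\|f\|_2^2 = \int_{\fund^n} |f(z)|^2\, d^*z \;\geq\; \int_{\Sigma_{T, \frac{1}{2}}} |f(z)|^2 \, d^*z.
	\]
The complement $\fund^n \setminus \Sigma_{T, \frac{1}{2}}$ has positive measure, and because a nonzero cusp form is real analytic (as a smooth vector in the automorphic spectrum), $f$ does not vanish identically there; this will produce the strict inequality at the end.

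Next, I would parametrize points of the Siegel set by $z = uy$ with $u$ ranging over $(N(n, \R) \cap SL(n, \Z)) \bsl N(n, \R)$ (a compact torus realized as the cube $|x_{i, j}| \leq \frac{1}{2}$) and $y = a_{y_1, \ldots, y_{n-1}}$ satisfying $y_j > T$ for all $j$. The invariant measure factors as $d^*z = du \, d^*y$, so
	\[
	\int_{\Sigma_{T, \frac{1}{2}}} |f(z)|^2 \, d^*z = \int_{y_j > T} \int_{(N(n, \R) \cap SL(n, \Z)) \bsl N(n, \R)} |f(uy)|^2 \, du \, d^*y.
	\]

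For each fixed $y$ the smooth function $u \mapsto f(uy)$ on the compact torus has a Fourier expansion indexed by $(m_{i, j})_{1 \leq i < j \leq n} \in \Z^{n(n-1)/2}$, and Parseval's identity gives
	\[
	\int_{(N(n, \R) \cap SL(n, \Z)) \bsl N(n, \R)} |f(uy)|^2 \, du = \sum_{(m_{i, j})} \left|\widehat f_{(m_{i, j})}(y)\right|^2,
	\]
where $\widehat f_{(m_{i, j})}(y)$ is the Fourier coefficient against $e^{2\pi i \sum_{i < j} m_{i, j} u_{i, j}}$. Keeping only the modes with $m_{i, j} = 0$ whenever $j > i + 1$, and relabeling $m_k := m_{n-k, n-k+1}$, recovers exactly the coefficients $W_f(y; m_1, \ldots, m_{n-1})$ of the lemma, yielding
	\[
	\int_{(N(n, \R) \cap SL(n, \Z)) \bsl N(n, \R)} |f(uy)|^2 \, du \;\geq\; \sum_{(m_1, \ldots, m_{n-1}) \in \Z^{n-1}} |W_f(y; m_1, \ldots, m_{n-1})|^2.
	\]
A further restriction to $m_1, \ldots, m_{n-2} \geq 1$ and $m_{n-1} \neq 0$ discards additional nonnegative terms, and integrating in $y$ over $y_j > T$ produces the claimed bound.

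The main delicate point is justifying the strict inequality, which I would obtain from the positive-measure set $\fund^n \setminus \Sigma_{T, \frac{1}{2}}$ on which the nonzero real-analytic cusp form $f$ cannot vanish identically. Cuspidality enters only mildly here: it guarantees the rapid decay of the Whittaker coefficients $W_f$ that makes the right-hand side a convergent series of convergent integrals, and it justifies the Piatetski-Shapiro--Shalika-style point of view in which the generic Whittaker-type Fourier modes extracted above are the natural coefficients of $f$.
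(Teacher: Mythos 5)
Your route is genuinely different from the paper's. The paper iterates the Piatetski-Shapiro--Shalika expansion column by column (following \S5.3 of Goldfeld), at each step discarding all cosets of $SL(n-j,\Z)$ except the identity, whereas you propose a single application of orthogonality on the compact unipotent quotient after restricting to the Siegel set $\Sigma_{T,\frac12}$. With one correction this is simpler and it works.

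The correction is needed at the ``Parseval on the compact torus'' step, which is not valid for $n\geq 3$. The quotient $(N(n,\R)\cap SL(n,\Z))\bsl N(n,\R)$ is a compact \emph{nilmanifold}, not a torus, because $N(n,\R)$ is nonabelian; the functions $e^{2\pi i\sum_{i<j} m_{i,j}u_{i,j}}$ for general $(m_{i,j})\in\Z^{n(n-1)/2}$ do not descend to the quotient, so there is no Fourier expansion indexed by $\Z^{n(n-1)/2}$ and no Parseval identity of the form you wrote. The characters of $N(n,\R)$ that are trivial on $N(n,\Z)$ are exactly those depending only on the superdiagonal coordinates $u_{1,2},\ldots,u_{n-1,n}$, i.e.\ the case $m_{i,j}=0$ for $j>i+1$. (For $n=3$, $L^2$ of the Heisenberg nilmanifold contains, besides these abelian characters, pieces coming from the infinite-dimensional irreducibles of the Heisenberg group, which your expansion omits.)

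The fix is immediate: the degenerate characters $\chi_m(u)=e^{2\pi i(m_1u_{n-1,n}+\cdots+m_{n-1}u_{1,2})}$, $m\in\Z^{n-1}$, \emph{do} form an orthonormal (but not complete) system in $L^2\bigl((N(n,\R)\cap SL(n,\Z))\bsl N(n,\R)\bigr)$, so Bessel's inequality gives directly
$$\int_{(N(n,\R)\cap SL(n,\Z))\bsl N(n,\R)}|f(uy)|^2\,du\;\geq\;\sum_{m\in\Z^{n-1}}|W_f(y;m_1,\ldots,m_{n-1})|^2,$$
which is the inequality you actually use. Replace the appeal to Parseval plus ``keeping only the modes with $m_{i,j}=0$ for $j>i+1$'' by a one-line appeal to Bessel, and the argument closes. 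Two smaller points: cuspidality does more than ensure convergence --- it forces $W_f(y;m)=0$ whenever some $m_j=0$ (apply the cuspidality condition for the parabolic $P_{n-j,j}$), which is why the restricted sum over $m_1,\ldots,m_{n-2}\geq1$, $m_{n-1}\neq0$ already captures all the nonvanishing degenerate coefficients. And your justification of strictness via real-analyticity is only valid for Maass forms, not arbitrary cuspidal $f\in L^2$; the paper obtains strictness the same way you do, from $\fund^n\supsetneq\Sigma_{1,\frac12}$, so I would not lean on analyticity here.
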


\begin{proof}[Proof of Lemma \ref{l:lowerbd}] 
We follow the argument in \S5.3, \cite{Goldfeld:2006}. For $j=1, \ldots, n-1$, let
	\begin{align*}
	u_{n-j+1} :=\bpm  & u_{1, n-j+1} &\\I_{n-j} & \vdots  & 0_{n-j\times j-1}\\ & u_{n-j, n-j+1}  &\\ & & \\ 0_{j\times n-j}& &I_j\\ & & \ebpm \in N(n, \R)
	\end{align*}
	where $u_{1, n-j+1}, \ldots, u_{n-j, n-j+1}\in \R$, $I_a$ is the $a\times a$ identity matrix and $0_{a\times b}$ is an $a\times b$ matrix with $0$ for every entry.
	
Let $z=xy\in \bH^n$ for $x=\sm 1 & x_{1, 2} & \ldots & x_{1, n}\\ & \ddots & & \vdots\\ & & 1 & x_{n-1, n}\\ & & & 1\esm\in N(n, \R)$ and $y=a_{y_1, \ldots, y_{n-1}}\in A_0(n, \R)$. Fix $j=1, \ldots, n-1$. For $m_1, \ldots, m_j\in \Z$, define
	\begin{align*}
	f_j(z;m_1, \ldots, m_j)& :=\int_{\Z\bsl \R}\cdots\int_{\Z\bsl \R} f\left(u_n\cdot u_{n-1}\cdots u_{n-j+1}\cdot z\right)\\\notag
	&\hskip 10 pt \times e^{-2\pi i(m_1u_{n-1, n}+\cdots +m_ju_{n-j, n-j+1})}\; d^*u_n\cdots d^*u_{n-j+1},
	\end{align*}
	where $d^*u_{n-j+1} = \prod_{k=1}^{n-j}du_{k, n-j+1}$. Then for $j=n-1$, we have
	$$f_{n-1}(z;m_1, \ldots, m_{n-1})=W_f(z;m_1, \ldots, m_{n-1}).$$
Let $f_0(z) := f(z)$ with $z\in \bH^n$. By the proof of Theorem 5.3.2, \cite{Goldfeld:2006},  we can prove the followings. 
\begin{itemize}
	\item For $j=1, \ldots, n-1$, we have
	\begin{align*}
	&f_j(z;m_1, \ldots, m_j)\\\notag
	&\hskip 20pt =\int_{\Z\bsl \R}\cdots\int_{\Z\bsl \R} f_{j-1}(u_{n-j+1}z; m_1, \ldots, m_{j-1})e^{-2\pi i m_ju_{n-j, n-j+1}}\; d^*u_{n-j+1}.
	\end{align*}
	
	\item Fix $j=1, \ldots, n-2$. For positive integers $m_1, \ldots, m_{j-1}$, we have
	\begin{align*}
	&f_{j-1}(z;m_1, \ldots, m_{j-1}) \\\notag
	&\hskip 10pt= \sum_{m_j=1}^\infty\sum_{\gamma_{n-j}\in P_{n-j-1, 1}(\Z)\bsl SL(n-j, \Z)}f_j\left(\bpm \gamma_{n-j} & \\ & I_j \ebpm z;m_1, \ldots, m_{j-1}, m_j\right).
	\end{align*}
	
	\item For positive integers $m_1,\ldots, m_{n-2}$, we have
	\begin{align*}
	f_{n-2}(z;m_1, \ldots, m_{n-2}) &=\sum_{0\neq m_{n-1}\in \Z}f_{n-1}(z;m_1, \ldots, m_{n-2}, m_{n-1})\\\notag
	&=\sum_{0\neq m_{n-1}\in \Z}W_f\left(z;m_1, \ldots, m_{n-2}, m_{n-1}\right).
	\end{align*}
\end{itemize}	

Since the Siegel set $\Sigma_{1, \frac{1}{2}}\subset \fund^n$, 
$$
	||f||_2^2 =\int_{\fund^n} \left|f(z)\right|^2\; d^*z\hskip 3pt>\hskip 3pt \int_1^\infty\cdots \int_1^\infty \int_{-\frac{1}{2}}^{\frac{1}{2}} \cdots \int_{-\frac{1}{2}}^{\frac{1}{2}} \left|f(z)\right|^2\; d^*z.
$$
Then
\begin{multline*}
	\int_1^\infty\cdots\int_1^\infty \int_{-\frac{1}{2}}^{\frac{1}{2}} \cdots \int_{-\frac{1}{2}}^{\frac{1}{2}} \left|f(z)\right|^2\; d^*z\\
	=\int_1^\infty\cdots\int_1^\infty \int_{-\frac{1}{2}}^{\frac{1}{2}}\cdots\int_{-\frac{1}{2}}^{\frac{1}{2}}\sum_{m_1=1}^\infty \sum_{\gamma_{n-1}\in P_{n-2, 1}(\Z)\bsl SL(n-1, \Z)} \overline{f(z)}e^{2\pi im_1(\gamma_{n-1, 1 }x_{1, n}+\cdots \gamma_{n-1, n-1}x_{n-1, n})}\\
	\times f_1\left(\bpm \gamma_{n-1} & \\ & 1\ebpm\cdot y_1^{-\frac{n-1}{n}}\cdot\bpm & &0\\ & y_1z' & \vdots\\ & & 0 \\ 0 & \ldots \; \; 0 & 1\ebpm ;m_1\right)\; d^*z, 
\end{multline*}
where $\gamma_{n-1} = \sm & & \\  & * & \\ & & \\ \gamma_{n-1, 1} & \ldots & \gamma_{n-1, n-1}\esm\in P_{n-2, 1}(\Z)\bsl SL(n-1, \Z)$. Here $z'\in \bH^{n-1}$. 
	For  a positive integer $m_1$ and $\gamma_{n-1} = \sm & & \\ & * &\\ & &  \\ \gamma_{n-1, 1} & \ldots & \gamma_{n-1, n-1}\esm\in P_{n-2, 1}(\Z)\bsl SL(n-1, \Z)$, it follows that
\begin{multline*} 
	\int_{-\frac{1}{2}}^{\frac{1}{2}}\cdots\int_{-\frac{1}{2}}^{\frac{1}{2}}\overline{f(z)}e^{2\pi im_1(\gamma_{n-1, 1 }x_{1, n}+\cdots \gamma_{n-1, n-1}x_{n-1, n})}\prod_{k=1}^{n-1}dx_k\\
	=\overline{f_1\left(\bpm \gamma_{n-1} & \\ & 1\ebpm \cdot y_1^{-\frac{n-1}{n}}\cdot \bpm & & 0\\ & y_1z' & \vdots \\& & 0\\ 0& \ldots \; \; 0&1\ebpm \right)}
\end{multline*}
So, 
\begin{multline*}
	\int_1^\infty\int_1^\infty\int_{-\frac{1}{2}}^{\frac{1}{2}}\cdots \int_{-\frac{1}{2}}^{\frac{1}{2}}\left|f(z)\right|^2\; d^*z\\
	\geq \sum_{m_1=1}^\infty\int_1^\infty\cdots\int_1^\infty \int_{-\frac{1}{2}}^{\frac{1}{2}} \cdots \int_{-\frac{1}{2}}^{\frac{1}{2}}\left|f_1\left(y_1^{-\frac{n-1}{n}}\cdot\bpm & &0\\ & y_1z' & \vdots\\ & & 0 \\ 0 & \ldots \; \; 0 & 1\ebpm; m_1\right)\right|^2\;  \prod_{1\leq i<j\leq n-1}dx_{i, j}\; d^*y.
\end{multline*}

After continuing this process inductively for $n-1$ steps, we finally obtain  
	\begin{align*}
	||f||_2^2\hskip 3pt > \hskip 3pt \sum_{m_1=1}^\infty\cdots \sum_{m_{n-2}=1}^\infty\sum_{m_{n-1}\neq 0}\int_1^\infty\cdots\int_1^\infty \left|W_f\left(y;m_1, \ldots, m_{n-2}, m_{n-1}\right)\right|^2\; d^*y\;.
	\end{align*}
 
\end{proof}

Let $T>  4(2^{n-1}\ln p +\delta)$. 
By Lemma \ref{l:annihilate_image_nonzero}, 
for any $z\in \Sigma_{e^T, \frac{1}{2}}\subset \fund^n$, we have
	$$H_\delta\natural_p^n \widetilde F_{\Pi_M}(z) = \widehat H_\delta(\ell_{\pi_\infty})\cdot \widehat\natural_p^n(\ell_{\pi_\infty}, \ell_{\pi_p})\cdot F_{\Pi_M}(z)\;.$$
Then for $z\in \Sigma_{e^T, \frac{1}{2}}$, we get
	\begin{multline*}
	W_{\natural_p^n H_\delta \widetilde F_{\Pi_M}}(z; 1, \ldots, 1) \\
	= \widehat H_\delta(\ell_{\pi_\infty}) \cdot \widehat\natural_p^n(\ell_{\pi_\infty}, \ell_{\pi_p})\cdot W_J(y; \ell_{\pi_\infty}, 1)e^{2\pi i(x_{n-2, n-1}+x_{n-2, n-1}+\cdots +x_{1, 2})}.
	\end{multline*}
Therefore by Lemma \ref{l:lowerbd}, we have
	$$
	||H_\delta\natural_p^n \widetilde F_{\Pi_M}||_2^2 > \frac{1}{4} \left|\widehat\natural_p^n(\ell_{\pi_\infty}, \ell_{\pi_p})\right|^2 \cdot\int_{p^T}^\infty\cdots\int_{p^T}^\infty \left|W_J(y;\ell_{\pi_\infty}, 1)\right|^2\; d^*y
	$$
for $y=a_{y_1, \ldots, y_{n-1}}\in A^0(n, \R)$. 
	
\begin{lem}\label{l:annihilating_norm}
Let $n\geq 2$ be an integer and $p$ be a prime. Then 
	\begin{align}\label{e:annihilating_norm}
	||\natural_p^n f ||_2^2
	\leq \left(p^{- \frac{n^2-1}{2(n^2+1)}} + p^{ \frac{n^2-1}{2(n^2+1)}}\right)^{n2^{n-1}}
	\cdot ||f||_2^2
	\end{align}
for any smooth function $f\in L^2\left(SL(n, \Z)\bsl\bH^n\right)$.  
\end{lem}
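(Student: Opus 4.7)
The plan is to reduce the $L^2$ operator bound to a pointwise bound on the spectral eigenvalue $\widehat\natural_p^n$ via the Langlands spectral decomposition, and then control $\widehat\natural_p^n$ uniformly over the cuspidal spectrum by applying the Luo--Rudnick--Sarnak bound toward Ramanujan to the Satake parameters.

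First I would invoke the decomposition $\LtwoH = \LtwoHcont \oplus \LtwoHresi \oplus \LtwoHcusp$ and write $f = f_{\cont} + f_{\residue} + f_{\cusp}$. The computation in the proof of Theorem~\ref{t:image_cuspidal} (via Lemma~\ref{l:Eisenstein_type_all}) already shows $\widehat\natural_p^n(\ell_\infty(E), \ell_p(E)) = 0$ for every Eisenstein series $E$, so $\natural_p^n$ annihilates both $\LtwoHcont$ and $\LtwoHresi$, giving $\natural_p^n f = \natural_p^n f_{\cusp}$. Expanding $f_{\cusp}$ in an orthonormal basis $\{u_i\}$ of Hecke--Maass forms, each an eigenfunction with $\natural_p^n u_i = \widehat\natural_p^n(\ell_\infty(u_i), \ell_p(u_i))\cdot u_i$, Parseval's identity yields
\begin{equation*}
||\natural_p^n f||_2^2 = \sum_i |\left<f, u_i\right>|^2 \,|\widehat\natural_p^n(\ell_\infty(u_i), \ell_p(u_i))|^2 \;\leq\; \Bigl(\sup_i |\widehat\natural_p^n(\ell_\infty(u_i), \ell_p(u_i))|^2\Bigr)\, ||f||_2^2.
\end{equation*}

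The task therefore reduces to bounding $|\widehat\natural_p^n(\ell_\infty(\sigma), \ell_p(\sigma))|$ uniformly over unitary unramified cuspidal representations $\sigma$ of $\A^\times\bsl GL(n, \A)$. Here I invoke the Luo--Rudnick--Sarnak bound, which gives $|\rRe(\ell_{\sigma, v, j})| \leq \frac{1}{2} - \frac{1}{n^2+1} =: \alpha$ for every $j$ and every place $v$. For each pair $(I, J)$ with $|I|=|J|=k$ appearing in (\ref{e: annihilating_eigenvalue}), set $a_{I, J} := \sum_{i\in I}\ell_{\sigma, \infty, i} + \sum_{j\in J}\ell_{\sigma, p, j}$, so $|\rRe(a_{I, J})| \leq 2k\alpha$. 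Unitarity of $\sigma$ makes $\overline{\ell_\sigma}$ a permutation of $-\ell_\sigma$, and since $\widehat\natural_p^n$ is invariant under permutations at each place, $\overline{\widehat\natural_p^n(\ell_\sigma)} = \widehat\natural_p^n(-\ell_\sigma)$. This lets me pair conjugate factors:
\begin{equation*}
|\widehat\natural_p^n(\ell_\sigma)|^2 = \prod_{k=1}^{\lfloor n/2\rfloor}\prod_{|I|=|J|=k}(1 - p^{-a_{I,J}})(1 - p^{a_{I,J}}) = \prod_{k, I, J}(2 - p^{a_{I,J}} - p^{-a_{I,J}}).
\end{equation*}
The identity $2 - p^a - p^{-a} = -(p^{a/2} - p^{-a/2})^2$ together with maximization over $\rIm(a)$ yields the sharp bound $|2 - p^a - p^{-a}| \leq (p^{\rRe(a)/2} + p^{-\rRe(a)/2})^2 \leq (p^{k\alpha} + p^{-k\alpha})^2$.

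The hard part is the combinatorial accounting required to collapse the resulting product of factors down to exactly $(p^{-\alpha} + p^\alpha)^{n 2^{n-1}}$. Exploiting the zero-sum relation $\sum_{j=1}^n \ell_{\sigma, v, j} = 0$ sharpens the Re estimate to $|\rRe(a_{I,J})| \leq 2\min(k, n-k)\alpha$, and the complementation $(I, J) \leftrightarrow (I^c, J^c)$ identifies the factor of size $k$ with that of size $n-k$. Combined with the elementary identity $\sum_{k=1}^n k\binom{n}{k} = n\cdot 2^{n-1}$ (the likely source of the exponent), a careful grouping of the factors over $k$ and $(I, J)$ should reduce the product exponent to $n 2^{n-1}$. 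Once this bookkeeping is completed, the claimed bound follows by combining with the spectral identity above. I expect this precise counting, rather than any deeper analytic input, to be the main obstacle.
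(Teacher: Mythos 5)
Your overall architecture is the same as the paper's: reduce to the cuspidal spectrum (the annihilation of $\LtwoHcont\oplus\LtwoHresi$ via Lemma~\ref{l:Eisenstein_type_all}), expand $f_{\cusp}$ in an orthonormal Hecke--Maass basis, pass to $\sup_i|\widehat\natural_p^n(\ell_\infty(u_i),\ell_p(u_i))|^2$, and invoke the Luo--Rudnick--Sarnak bound $|\rRe(\ell_{v,j})|\le\frac12-\frac1{n^2+1}$. Your conjugation observation $\overline{\widehat\natural_p^n(\ell)}=\widehat\natural_p^n(-\ell)$, which pairs $1-p^{-a}$ with $1-p^{a}$ and produces $|p^{a/2}-p^{-a/2}|^2$, is a clean way of arriving at the expression the paper writes (the paper simply asserts the bound on $|p^{-a/2}-p^{a/2}|$ without explaining why that quantity, rather than $|1-p^{-a}|$, is what needs to be controlled). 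Up to here you are aligned with the paper's proof.

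The genuine gap is exactly the step you defer: the combinatorial collapse to $(p^{-\alpha}+p^\alpha)^{n2^{n-1}}$ with $\alpha=\frac{n^2-1}{2(n^2+1)}$, and I don't think the route you sketch closes it. Two concrete problems. First, the complementation $(I,J)\leftrightarrow(I^c,J^c)$ does not pair factors \emph{inside} $\widehat\natural_p^n$: the outer product in (\ref{e: annihilating_eigenvalue}) runs only over $k\le\lfloor n/2\rfloor$, and $(I^c,J^c)$ has size $n-k>\lfloor n/2\rfloor$ whenever $k<n/2$, so that pair simply does not appear and there is nothing to identify. Second, and more seriously, the middle product over $(I,J)$ has $\binom{n}{k}^2$ factors (two independent index choices), not $\binom{n}{k}$; each contributes a power $k$ of $(p^\alpha+p^{-\alpha})$ after your step $(p^{k\alpha}+p^{-k\alpha})\le(p^\alpha+p^{-\alpha})^k$, so the naive total exponent is $\sum_{k\le\lfloor n/2\rfloor}k\binom{n}{k}^2$ (or twice that, depending on whether you pair conjugates), which already exceeds $n2^{n-1}$ for $n\ge 4$ — e.g.\ for $n=4$ you get $1\cdot16+2\cdot36=88$ against a target of $32$. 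The identity $\sum_k k\binom{n}{k}=n2^{n-1}$ that you cite does match the paper's displayed intermediate exponent $2k\binom{n}{k}$ (which for $n$ even sums to exactly $n2^{n-1}$), but that exponent presumes \emph{one} binomial coefficient's worth of factors per $k$, not $\binom{n}{k}^2$; so the paper's own bookkeeping does not obviously match the definition of $\widehat\natural_p^n$ either. In short, you've correctly located the crux, but you should not expect ``careful grouping'' alone to land on $n2^{n-1}$ — work out the count for $n=4$ explicitly, and you will see the mismatch that needs to be resolved (either a sharper pointwise bound on the factors, a different normalization in the definition of $\widehat\natural_p^n$, or a weaker stated constant).
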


\begin{proof}
Since $\natural_p^n$ annihilates the continuous part, we should focus on the cuspidal functions. 
For any smooth cuspidal function $f\in L^2\left(SL(n, \Z)\bsl \bH^n\right)$, 
we have $f(z) = \sum_{j=1}^\infty \left<f, u_j\right>u_j(z)$, 
where $u_j(z)$ are Hecke-Maass forms for $SL(n, \Z)$ and $||u_j||_2^2=1$. So
	$$
	||\natural_p^n f||_2^2
	\leq \sup_{j}\left|\widehat\natural_p^n(\ell_\infty(u_j), \ell_p(u_j))\right|^2\cdot \|f\|_2^2.
	$$

For each $j\geq 1$, let $\ell_1:= \ell_{\infty}(u_j)$ and $\ell_2:=\ell_p (u_j)$. 
By \cite{Luo-Rudnick-Sarnak} and \cite{Luo-Rudnick-Sarnak:1999}, 
we have $\left|{\rm Re}(\ell_{1, i})\right|,  \left|{\rm Re}(\ell_{2, i})\right|\leq \frac{1}{2}-\frac{1}{n^2+1}$
for $i=1, \ldots, n$. 
Then for any $1\leq j_1< \cdots < j_k$ and $1\leq i_1< \cdots < i_k$ for $1\leq k\leq \lfloor\frac{n}{2}\rfloor$, we have
	$$\left|p^{-\frac{\ell_{1, i_1}+\cdots +\ell_{1, i_k}+\ell_{2, j_1}+\cdots +\ell_{2, j_k}}{2}} - p^{\frac{\ell_{1, i_1}+\cdots +\ell_{1, i_k}+\ell_{2, j_1}+\cdots +\ell_{2, j_k}}{2}}\right| \leq p^{-k\cdot \frac{n^2-1}{2(n^2+1)}} + p^{k\cdot \frac{n^2-1}{2(n^2+1)}}.$$
So, by (\ref{e: annihilating_eigenvalue}), we have 
	$$\left|\natural_p^n(\ell_1, \ell_2)\right| 
	\leq \prod_{k=1}^{\lfloor\frac{n}{2}\rfloor}\left(p^{- \frac{n^2-1}{2(n^2+1)}} + p^{ \frac{n^2-1}{2(n^2+1)}}\right)^{2k\cdot \frac{n!}{k!(n-k)!}}
	\leq \left(p^{- \frac{n^2-1}{2(n^2+1)}} + p^{ \frac{n^2-1}{2(n^2+1)}}\right)^{n2^{n-1}}.
	$$	
\end{proof}

\begin{lem}\label{l:upperbd_bump_funct}
Let $n\geq 2$ be an integer. For $\delta>0$ let $H_\delta$
be the standard bump function. For any $\ell = (\ell_1, \ldots, \ell_n)\in\mathfrak a_\C^*(n)$ with $\left|\rRe(\ell_j)\right|< \frac{1}{2}$, we have
	$$\left|\widehat H_\delta(\ell)\right| <  e^{\frac{n(n+6)}{4}\delta}.$$
\end{lem}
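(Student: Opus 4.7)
The plan is to follow closely the structure of the proof of Lemma~\ref{l:lowerbd_bump_funct}, but retain $|\varphi_\ell(g)|$ itself rather than $|\varphi_\ell(g)-1|$. First, I would use the bi-$SO(n,\R)$-invariance of $H_\delta$, together with the symmetry $H_\delta(g^{-1})=H_\delta(g)$ of the standard bump function, to unwind the defining identity for the spherical transform, obtaining
\begin{equation*}
	\widehat H_\delta(\ell) \;=\; \int_{SL(n,\R)} H_\delta(g)\,\varphi_\ell(g)\,dg.
\end{equation*}
Since $H_\delta\geq 0$, $\int H_\delta = 1$, and ${\rm supp}(H_\delta)\subset B_\delta$, this immediately gives
\begin{equation*}
	|\widehat H_\delta(\ell)| \;\leq\; \sup_{g\in B_\delta}|\varphi_\ell(g)|,
\end{equation*}
so the whole lemma reduces to a pointwise upper bound on the spherical function.

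Next, using the same $y_j'$ convention as in the proof of Lemma~\ref{l:lowerbd_bump_funct}, namely $\exp({\rm Iw}_Y(g)) = {\rm diag}(y_1',\ldots,y_n')$, the identity $\varphi_\ell(g) = \prod_{j=1}^n (y_j')^{\ell_j+(n-2j+1)/2}$ yields
\begin{equation*}
	\log|\varphi_\ell(g)| \;=\; \sum_{j=1}^n \Bigl(\rRe(\ell_j)+\tfrac{n-2j+1}{2}\Bigr)\log y_j'.
\end{equation*}
Lemma~\ref{l:Relations_Iwasawa_Polar} provides $|\log y_j'|\leq \delta$ on $B_\delta$, so the triangle inequality gives
\begin{equation*}
	\bigl|\log|\varphi_\ell(g)|\bigr| \;\leq\; \delta\sum_{j=1}^n \Bigl|\rRe(\ell_j)+\tfrac{n-2j+1}{2}\Bigr|.
\end{equation*}

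Finally, the strict hypothesis $|\rRe(\ell_j)|<\tfrac12$ gives $|\rRe(\ell_j)+(n-2j+1)/2|<n/2$ per term, and the same crude summation step already carried out in Lemma~\ref{l:lowerbd_bump_funct} yields
\begin{equation*}
	\sum_{j=1}^n \Bigl|\rRe(\ell_j)+\tfrac{n-2j+1}{2}\Bigr| \;<\; \frac{n}{2}\Bigl(\frac{n}{2}+3\Bigr) \;=\; \frac{n(n+6)}{4}.
\end{equation*}
Exponentiating produces $|\varphi_\ell(g)|<e^{\frac{n(n+6)}{4}\delta}$ for every $g\in B_\delta$, whence the claimed strict bound $|\widehat H_\delta(\ell)|<e^{\frac{n(n+6)}{4}\delta}$. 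There is no serious obstacle; the only step worth care is the rewriting of $\widehat H_\delta(\ell)$ as a plain integral of $\varphi_\ell$ against $H_\delta$, which is straightforward given the invariance and inversion symmetry of $H_\delta$.
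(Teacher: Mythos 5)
Your proof is correct and follows the same route as the paper: reduce $|\widehat H_\delta(\ell)|$ to $\sup_{g\in B_\delta}|\varphi_\ell(g)|$ using positivity, normalization and support of $H_\delta$, then bound $\log|\varphi_\ell(g)|$ on $B_\delta$ via Lemma~\ref{l:Relations_Iwasawa_Polar} and the estimate $\sum_j|\rRe(\ell_j)+\tfrac{n-2j+1}{2}| < \tfrac{n(n+6)}{4}$ already appearing in the proof of Lemma~\ref{l:lowerbd_bump_funct}. One small caution: your parenthetical per-term bound $|\rRe(\ell_j)+\tfrac{n-2j+1}{2}|<\tfrac n2$ does not by itself produce the sum bound (it only gives $\tfrac{n^2}{2}$, which exceeds $\tfrac{n(n+6)}{4}$ once $n\geq 7$); what actually works, and what Lemma~\ref{l:lowerbd_bump_funct} relies on, is $\sum_j\bigl|\tfrac{n-2j+1}{2}\bigr|\leq\tfrac{n^2}{4}$ combined with $\sum_j|\rRe(\ell_j)|<\tfrac n2$, so it is better to drop the per-term remark and cite the summation from Lemma~\ref{l:lowerbd_bump_funct} directly, as you do in the end.
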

\begin{proof}[Proof of Lemma \ref{l:upperbd_bump_funct}]
For $\ell\in \mathfrak a_\C^*(n)$, we have
	\begin{align*}
	\left|\widehat H_\delta(\ell)\right| &= \left|\int_{SL(n, \R)}H_\delta(g)\cdot\varphi_\ell(g)\; dg\right| 
		\leq \underset{g\in B_\delta}\sup\left|\varphi_\ell(g)\right|
	\end{align*}
since $H_\delta(g)\geq 0$ and $\int_{B_\delta}H_\delta(g)\; dg=1$. 
As in the proof of Lemma \ref{l:lowerbd_bump_funct}, we have
	$$
	\left|\varphi_\ell(g)\right| 
	<e^{\frac{n(n+6)}{4}\delta},
	$$
	for any $g\in B_\delta$.
\end{proof}


The following lemma finally gives (\ref{e:main}).

\begin{lem}\label{l:upperbd_step1} 
Let $A_\infty$, $A_{S, {\rm finite}}$, $B_1$ and $B_2$ be as in Theorem \ref{t:main}. Then we have
	\begin{align*}
	&\sum_{j=1}^{n-1}||\left(\mathcal C_n^{(j)}-\lambda_\infty^{(j)}(\ell_{\pi_\infty})\right)\natural_p^nH_\delta \widetilde F_{\Pi_M}||_2^2
	 + \sum_{q\in S, \atop \text{ finite}}\sum_{j=1}^{\lfloor\frac{n}{2}\rfloor} ||\left(T_q^{(j)}-\lambda_q^{(j)}(\ell_{\pi_q})\right)\natural_p^nH_\delta \widetilde F_{\Pi_M}||_2^2 \\
	&\quad < \underset{z\in B_2}{\sup}\left|\widetilde F_{\Pi_M}(z)-F_{\Pi_M}(z)\right|^2\\
	&\quad\quad \times   \left(p^{- \frac{n^2-1}{2(n^2+1)}} + p^{ \frac{n^2-1}{2(n^2+1)}}\right)^{n2^{n-1}}
	\cdot {\rm Vol}\left(B_1\right) \cdot(A_\infty+A_{S, {\rm finite}}).
	\end{align*}
\end{lem}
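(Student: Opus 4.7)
The plan is to introduce $E:=\widetilde F_{\Pi_M}-F_{\Pi_M}$, which vanishes identically on $\widetilde\fund^n$ by construction, and to exploit the fact that $F_{\Pi_M}$, although not automorphic, is a genuine joint eigenfunction on all of $\bH^n$: $\mathcal C_n^{(j)}F_{\Pi_M}=\lambda_\infty^{(j)}(\ell_{\pi_\infty})F_{\Pi_M}$, $T_q^{(j)}F_{\Pi_M}=\lambda_q^{(j)}(\ell_{\pi_q})F_{\Pi_M}$ for every $q\in M$, and $F_{\Pi_M}*H_\delta=\widehat H_\delta(\ell_{\pi_\infty})F_{\Pi_M}$. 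These identities mean that both $(\mathcal C_n^{(j)}-\lambda_\infty^{(j)})$ and $(T_q^{(j)}-\lambda_q^{(j)})$ annihilate $F_{\Pi_M}*H_\delta$, so $\widetilde F_{\Pi_M}$ may be replaced by $E$ whenever convenient, converting global objects into ones that are supported only outside $\widetilde\fund^n$.

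First I would commute $\natural_p^n$ outside the error operators. Since $\natural_p^n$ is a polynomial in Hecke operators at $p$ and in convolutions by bi-$SO(n,\R)$-invariant distributions (Lemma \ref{l:construction_annihilating}), and these commute pairwise with Casimir operators, Hecke operators at any prime, and right-convolution by $H_\delta$, one obtains
$(\mathcal C_n^{(j)}-\lambda_\infty^{(j)}(\ell_{\pi_\infty}))\,\natural_p^n H_\delta\widetilde F_{\Pi_M}=\natural_p^n(E*\kappa_j)$
with $\kappa_j:=(\mathcal C_n^{(j)}-\lambda_\infty^{(j)}(\ell_{\pi_\infty}))H_\delta$, and
$(T_q^{(j)}-\lambda_q^{(j)}(\ell_{\pi_q}))\,\natural_p^n H_\delta\widetilde F_{\Pi_M}=\natural_p^n\bigl(((T_q^{(j)}-\lambda_q^{(j)})E)*H_\delta\bigr)$.
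Applying Lemma \ref{l:annihilating_norm} peels off the $\natural_p^n$, producing the factor $C_\natural^2:=\bigl(p^{-\frac{n^2-1}{2(n^2+1)}}+p^{\frac{n^2-1}{2(n^2+1)}}\bigr)^{n2^{n-1}}$ that appears in the claimed bound.

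For the archimedean contribution, I would estimate $\|E*\kappa_j\|_2^2$ over $\fund^n$. The integrand $E(zh^{-1})\kappa_j(h)$ is nonzero only when $zh^{-1}\notin\widetilde\fund^n$, confining $z$ to $B_1=\fund^n\cap(\bH^n-\widetilde\fund^n)\cdot B_\delta$ and confining $zh^{-1}$ to $\fund^n\cdot B_\delta-\fund^n=B_2$. A sup bound gives $|E*\kappa_j(z)|\leq\|\kappa_j\|_1\sup_{B_2}|E|$, so $\|E*\kappa_j\|_2^2\leq\Vol(B_1)\|\kappa_j\|_1^2\sup_{B_2}|E|^2$, and summation over $j$ produces the $A_\infty$-term. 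For the nonarchimedean contribution, since $E\equiv 0$ on $\fund^n$, for $z\in\fund^n$ one has $(T_q^{(j)}-\lambda_q^{(j)})E(z)=T_q^{(j)}E(z)=\sum_{\gamma\in G_q^{(j)}}c_\gamma E(\gamma z)$, a sum of at most $\sharp T_q^{(j)}$ terms each bounded by $\sup_{B_2}|E|$ for $B_2=T_{q_{\max}}^{\lfloor n/2\rfloor}\fund^n-\fund^n$; the support in $z$ lies in $(T_{q_{\max}}^{\lfloor n/2\rfloor})^{-1}(\bH^n-\widetilde\fund^n)\cap\fund^n=B_1$. Young's inequality handles the remaining $*H_\delta$, and the spherical bound $|\widehat H_\delta(\ell)|<e^{\frac{n(n+6)}{4}\delta}$ from Lemma \ref{l:upperbd_bump_funct} absorbs the slight $B_\delta$-fattening of the Hecke-translated support, yielding the $A_{S,\mathrm{finite}}$-term.

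The main technical subtlety lies in the nonarchimedean case: the right-convolution by $H_\delta$ and the left-action Hecke operator $T_q^{(j)}$ jointly enlarge the support region slightly beyond $B_2$ (to something like $B_2\cdot B_\delta$), and the exponential factor $e^{\frac{n(n+6)}{4}\delta}$ appearing in $A_{S,\mathrm{finite}}$ is precisely what is needed to absorb this discrepancy, via the uniform growth estimate on the spherical function $\varphi_\ell$ over $B_\delta$ derived in the proof of Lemma \ref{l:upperbd_bump_funct}. Combining the archimedean and nonarchimedean estimates and multiplying by $C_\natural^2$ gives exactly the bound stated in the lemma.
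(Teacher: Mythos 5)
Your proof follows the same overall decomposition as the paper's: peel off $\natural_p^n$ by commutativity and Lemma~\ref{l:annihilating_norm} to extract $C_\natural^2=\bigl(p^{-\frac{n^2-1}{2(n^2+1)}}+p^{\frac{n^2-1}{2(n^2+1)}}\bigr)^{n2^{n-1}}$; replace $\widetilde F_{\Pi_M}$ by $E=\widetilde F_{\Pi_M}-F_{\Pi_M}$ using the pointwise eigenfunction identities for $F_{\Pi_M}$; for the archimedean term, transfer $(\mathcal C_n^{(j)}-\lambda_\infty^{(j)})$ onto $H_\delta$ and bound $\|E*\kappa_j\|_2^2$ by $\sup_{B_2}|E|^2\cdot\Vol(B_1)\cdot\|\kappa_j\|_1^2$; for the nonarchimedean term, use that $E$ vanishes on $\fund^n$ so only the Hecke-translated values of $E$ survive, yielding the $(\sharp T_q^{(j)})^2$ and the sets $B_1,B_2$ involving $T_{q_{\max}}^{\lfloor n/2\rfloor}$. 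This is exactly the paper's argument.

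The one place your narration drifts from the paper is the role of the factor $e^{\frac{n(n+6)}{4}\delta}$ in $A_{S,\mathrm{finite}}$. In the paper it arises by first commuting $*H_\delta$ out of the Hecke bracket and then bounding the $L^2$ operator norm of convolution by $H_\delta$ via the spectral multiplier estimate $\left|\widehat H_\delta(\ell)\right|<e^{\frac{n(n+6)}{4}\delta}$ from Lemma~\ref{l:upperbd_bump_funct}, after which the Hecke estimation lands on $B_1,B_2$ exactly, with no $B_\delta$-enlargement. Your claim that this factor ``absorbs the slight $B_\delta$-fattening of the Hecke-translated support'' is not what actually happens and is not justified --- that bound has nothing to do with volume or support enlargement. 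That said, your cleaner route is also valid: applying Young's inequality $\|g*H_\delta\|_2\leq\|g\|_2\|H_\delta\|_1=\|g\|_2$ after moving $H_\delta$ outside removes the exponential entirely and produces a strictly smaller upper bound, which still implies the lemma. So the flaw is one of explanation rather than substance, but you should be precise about whether the factor $e^{\frac{n(n+6)}{4}\delta}$ is actually needed in your argument (in your Young-inequality route it is not) rather than asserting it compensates for something it does not.
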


\begin{proof}[Proof of Lemma \ref{l:upperbd_step1}]
Since the operator $\natural_p^n$ commutes with the invariant differential operators $\mathcal C_n^{(j)}$ and Hecke operators $T_q^{(j)}$ for $j=1, \ldots, n-1$, we have
	\begin{align*}
	&\sum_{j=1}^{n-1}||\left(\mathcal C_n^{(j)}-\lambda_\infty^{(j)}(\ell_{\pi_\infty})\right)\natural_p^nH_\delta \widetilde F_{\Pi_M}||_2^2 + \sum_{q\in S, \atop \text{ finite}}\sum_{j=1}^{\lfloor\frac{n}{2}\rfloor} ||\left(T_q^{(j)}-\lambda_q^{(j)}(\ell_{\pi_q})\right)\natural_p^nH_\delta \widetilde F_{\Pi_M}||_2^2 \\
	&\quad\quad < \left(p^{- \frac{n^2-1}{2(n^2+1)}} + p^{ \frac{n^2-1}{2(n^2+1)}}\right)^{n2^{n-1}}\\
	&\times \left\{\sum_{j=1}^{n-1}||\left(\mathcal C_n^{(j)}-\lambda_\infty^{(j)}(\ell_{\pi_\infty})\right)\widetilde F_{\Pi_M}*H_\delta||_2^2 + \sum_{q\in S, \atop \text{ finite}}\sum_{j=1}^{\lfloor\frac{n}{2}\rfloor} ||\left(T_q^{(j)}-\lambda_q^{(j)}(\ell_{\pi_q})\right)\widetilde F_{\Pi_M}* H_\delta||_2^2\right\}
	\end{align*}
by Lemma \ref{l:annihilating_norm}. 

Consider the case when $v=\infty$. Since 
$\left(\mathcal C_n^{(j)}-\lambda_\infty^{(j)}(\ell_{\pi_\infty})\right)F_{\Pi_M}*H_\delta\equiv 0$,
 it follows that
	$$
	||\left(\mathcal C_n^{(j)}-\lambda_\infty^{(j)}(\ell_{\pi_\infty})\right)\widetilde F_{\Pi_M}*H_\delta||_2^2 
	=||\left(\widetilde F_{\Pi_M}-F_{\Pi_M}\right)*\left(\mathcal C_n^{(j)}-\lambda_\infty^{(j)}(\ell_{\pi_\infty})\right)H_\delta||_2^2.
	$$
So, 
	\begin{align*}
	&||\left(\widetilde F_{\Pi_M}-F_{\Pi_M}\right)* \left(\left(\mathcal C_n^{(j)}-\lambda_\infty^{(j)}(\ell_{\pi_\infty})\right) H_\delta\right)||_2^2 \\
	&\leq \int_{\fund^n}\underset{g\in B_\delta}{\rm sup}\left|\left(\widetilde F_{\Pi_M}-F_{\Pi_M}\right)(zg^{-1})\right|^2\cdot \left(\int_{SL(n, \R)}\left|\left(\mathcal C_n^{(j)}-\lambda_\infty^{(j)}(\ell_{\pi_\infty})\right)H(g)\right|\; dg\right)^2\; d^*z\\
	&\leq \underset{\fund^n\cdot B_\delta-\fund^n}{\rm sup}\left|\widetilde F_{\Pi_M}-F_{\Pi_M}\right|^2 \cdot {\rm Vol}\left(\left(\bH^n-\widetilde \fund^n\right)\cdot B_\delta\cap \fund^n\right)\\
	&\quad\times \left(\int_{SL(n, \R)}\left|\left(\mathcal C_n^{(j)}-\lambda_\infty^{(j)}(\ell_{\pi_\infty})\right)H_\delta(g)\right|\; dg\right)^2.
	\end{align*}

Consider the case when $v=q<\infty$ and $q\in S$. Since the Hecke operators commute with the convolution operator, we have
	\begin{align*}
	&||\left(T_q^{(j)}-\lambda_q^{(j)}(\ell_{\pi_q})\right)\left(\widetilde F_{\Pi_M}* H_\delta\right)||_2^2 \; \leq \;  e^{\frac{n(n+6)}{4}\delta}\cdot ||\left(T_q^{(j)}-\lambda_q^{(j)}(\ell_{\pi_q})\right)\widetilde F_{\Pi_M}||_2^2 
	\end{align*}
by Lemma \ref{l:upperbd_bump_funct}. Since $T_q^{(j)}F_{\Pi_M} = \lambda_q^{(j)}(\ell_{\pi_q})\cdot F_{\Pi_M}$ and $\widetilde F_{\Pi_M}(z) - F_{\Pi_M}(z) = 0$ for $z\in \widetilde\fund^n$, we have
	\begin{align*}
	&||\left(T_q^{(j)}-\lambda_q^{(j)}(\ell_{\pi_q})\right)\widetilde F_{\Pi_M}||_2^2 =\int_{\fund^n}\left|T_q^{(j)}\left(\widetilde F_{\Pi_M}-F_{\Pi_M}\right)(z)\right|^2\; d^*z\\
	&\quad \leq\; \underset{T_q^{(j)}\fund^n -\fund^n}{\sup}\left|\widetilde F_{\Pi_M}-F_{\Pi_M}\right|^2 \cdot {\rm Vol}\left((T_q^{(j)})^{-1}\left(\bH^n-\widetilde \fund^n\right)\cap \fund^n\right)\cdot\left(\sharp T_q^{(j)}\right)^2.
	\end{align*}	
\end{proof}

\subsection{Proof of Theorem \ref{t:main_laplacian}}
For $\delta>0$, let $C_\delta$ be as in Theorem \ref{t:main_laplacian} and
	\begin{align}\label{e:bump_function_example}
	H_\delta(g) := \left\{\begin{array}{lll}
	C_\delta \cdot e^{-\frac{1}{1-\left(\delta^{-1}\sigma(g)\right)^2}}, & \text{ if }\sigma(g)<\delta\\
	0, & \text{ otherwise}
	\end{array}\right.\end{align}
for $g\in SL(n, \R)$. Then $H_\delta$ is a standard bump function. 

For $j=1, \ldots, n$, let $D_j = D_{j, j}$.
For $g=\xi_1 \exp(a)\xi_2$ for $\xi_1, \xi_2\in SO(2, \R)$ and $a=(a_1, \ldots, a_n)\in \mathfrak a(n)$ with $a_1> \cdots > a_n$, by Theorem 4.1, VII, \S4 in \cite{Jorgenson-Lang}, we have
	\begin{multline*}
	\Delta_n H_\delta(g) = \Delta_n H_\delta(\exp a) \\
	=-\frac{1}{n(n-1)}\left\{\sum_{j=1}^{n-1}\frac{1}{j^2+j}\left(\sum_{i=1}^j (D_i -D_{j+1})\right)^2 \right.\\
	\left. +\sum_{1\leq i< j\leq n}\coth(a_i-a_j)\left(D_{i}-D_j\right)\right\}H_\delta(\exp a).
	\end{multline*}
So we have
	$$\left|\Delta_n H_\delta(\exp a)\right| \leq \frac{3(1+e^{2\delta})}{\delta^4} C_\delta$$
for $g\in B_\delta$. 

Therefore 
	\begin{align}\label{e:laplacian_bump_function}
	\notag
	&\int_{SL(n, \R)} \left|\left(\Delta_n -\lambda_\infty(\pi_\infty)\right)H_\delta(g)\right|\; dg  \\\notag
	&\quad \leq \int_{SL(n, \R)}\left|\Delta_n H_\delta(g)\right|\; dg+ \int_{SL(n, \R)} \left|\lambda_n(\ell_{\pi_\infty}) H_\delta(g)\right|\; dg\\
	& \quad\leq \frac{3(1+e^{2\delta})}{\delta^4} C_\delta\cdot{\rm Vol}(B_\delta)+\left|\lambda_n(\ell_{\pi_\infty})\right|\;.
	\end{align}
By applying (\ref{e:laplacian_bump_function}) to Theorem \ref{t:main}, we prove Theorem \ref{t:main_laplacian}.


\thispagestyle{empty}
{\footnotesize
\nocite{*}
\bibliographystyle{amsplain}
\bibliography{ACT_reference}
}

\end{document}